\newcommand{\beq}{\begin{equation}}
\newcommand{\eeq}{\end{equation}}
\DeclareMathOperator{\Eng}{Eng}
\def\R{\mathbb{R}}
\def\Ri{Riemannian }
\def\on{orthonormal  }
\numberwithin{equation}{section}
\newtheorem{theorem}{Theorem}
\newtheorem{thm}{Theorem}
\newtheorem{proposition}[thm]{Proposition}
\newtheorem{corollary}[thm]{Corollary}
\newtheorem{lemma}[thm]{Lemma}
\newtheorem{definition}[thm]{Definition}
\newtheorem{conj}[thm]{Conjecture}
\newtheorem{theorem1}[thm]{Theorem}
\renewcommand{\emph}[1]{{\bfseries\itshape{#1}}}
\numberwithin{figure}{section}
\def\Ri{Riemannian }
\def\SR{Sub-Riemannian }
\def\sR{sub-Riemannian } 
\def\on{orthonormal }
\newcommand{\Di}{\mathcal{D}}
\newcommand{\Je}{J^k(\mathbb{R},\mathbb{R})}
\begin{document}

\newtheorem*{backgroundtheorem}{Background Theorem}

% the `*' in front gets rid of the numbering;  if I put this above \begin{document}
%formatting gets messed up

\title[Metric lines in the Jet Space.]{Metric lines  in the  Jet Space.}  
\author[A.\ Bravo-Doddoli]{Alejandro\ Bravo-Doddoli} 
\address{Alejandro Bravo: Department of Mathematic University of Michigan, Ann Arbor, MI 48109, U.S.}
\email{Abravodo@umich.edu}
\keywords{Carnot group, Jet space, Global minimizing geodesic, sub
Riemannian geometry}
\begin{abstract} 
Given a \sR manifold, a relevant question is: what are the metric lines (isometric embedding of the real line)?
The space of $k$-jets of a real function of one real variable $x$, denoted by $\Je$, admits  the structure of a Carnot group, as every Carnot group $\Je$ is a \sR Manifold.
This work is devoted to provide a partial result about the classification of the metric lines in $\Je$.  The method to prove the main Theorems is to use an intermediate $3$-dimensional \sR space $\R^{3}_F$  lying between the group $\Je$ and the Euclidean space $\R^{2} \simeq \Je / [\Je,\Je]$.
\end{abstract}

\maketitle

\section{Introduction}

The space of $k$-jets of a real function of one real variable $x$, denoted by $\Je$, admits  the structure of a Carnot group, as every Carnot group $\Je$ is a \sR Manifold.
This work is the first of a sequence of papers, where we attempt to make a full classification of the metric lines in $\Je$. Let us introduce the definition of a metric line in the context of \sR geometry.
\begin{definition}\label{def:metric-line}
\index{Metric lines} \index{$dist_{M}(\cdot,\cdot)$} Let $M$ be a \sR manifold, we denote by  $dist_{M}(\cdot,\cdot)$ the \sR distance on $M$. Let $|\cdot|:\R \to [0,\infty)$ be the absolute value. We say that a geodesic $\gamma:\R \to M$ is a metric line if  
$$|a-b| = dist_{M}(\gamma(a),\gamma(b))\;\;\; \text{for all compact set}\;\;\; [a,b]\;\; \subset \;\;\R. $$
\end{definition}
See Definition \cite[sub-sub-Chapter 4.7.2 ]{agrachev} or \cite[sub-Chapter 1.4 ]{tour} for the formal definition of a \sR geodesic. An alternative term for \enquote{metric line} is \enquote{globally minimizing geodesic}.

In \cite[Background Theorem]{RM-ABD}, we showed a bijection between the set of pairs $(F,I)$ and the set of geodesics in $\Je$, where $F$ is a polynomial of degree $k$ or less, and $I$ is a closed interval, called the hill interval, see Definition \ref{def:hill-int} below. The polynomial $F$ defines a reduced Hamiltonian system $H_F$, see equation \eqref{eq:fund-eq-jet-space} below, provided by the sympletic reduction of \sR geodesic flow on $\Je$. In addition,  we classified the geodesic  in $\Je$ according to their reduced dynamics, that is, the geodesics in $\Je$ are line, $x$-periodic, homoclinic, heteroclinic of the direct-type or heteroclinic of the turn-back, see sub-Section \ref{sec:clas-geo-je} or Figure \ref{fig:perio-hom-curve}. The Conjecture concerning metric lines in $\Je$ is the following.
\begin{conj}\label{con:1}
The metric lines in $\Je$ are precisely geodesics of the type: line, homoclinic and the heteroclinic of the direct-type.
\end{conj}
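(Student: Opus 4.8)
\noindent\textit{Sketch of a strategy toward Conjecture~\ref{con:1}.}
The plan is to transfer the question from the $(k+2)$-dimensional Carnot group $\Je$ down to the intermediate $3$-dimensional \sR space $\R^{3}_F$, using a tower of \sR submersions
\beq
\Je \ \twoheadrightarrow\ \R^{3}_F \ \twoheadrightarrow\ \R^{2}\simeq\Je/[\Je,\Je]
\eeq
under which every normal geodesic of $\Je$ projects to a normal geodesic of $\R^{3}_F$ having the same reduced Hamiltonian system $H_F$, hence the same type and the same hill interval, while $\R^{2}$ is the abelianization. The engine is the elementary fact that a \sR submersion is distance non-increasing and that horizontal curves project, and lift, with no change of length; so for a unit-speed geodesic $\gamma\colon\R\to\Je$, with projections $\bar\gamma$ to $\R^{3}_F$ and $\hat\gamma$ to $\R^{2}$, one obtains the chain
\beq
dist_{\R^{2}}\!\big(\hat\gamma(a),\hat\gamma(b)\big)\ \le\ dist_{\R^{3}_F}\!\big(\bar\gamma(a),\bar\gamma(b)\big)\ \le\ dist_{\Je}\!\big(\gamma(a),\gamma(b)\big)\ \le\ |a-b|
\eeq
for every $[a,b]$, so that $\gamma$ is a metric line in $\Je$ as soon as $\bar\gamma$ --- or even just $\hat\gamma$ --- is a metric line downstairs. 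The \emph{line}-type geodesics are then immediate: they are left-translates of horizontal one-parameter subgroups, so $\hat\gamma$ is an affine line traversed at unit speed and the leftmost term already equals $|a-b|$. The homoclinic and direct-type heteroclinic geodesics, by contrast, have non-constant reduced motion $x(t)$ and do not project to straight lines in $\R^{2}$ --- which is exactly why the finer space $\R^{3}_F$ is needed --- so the \enquote{if} part of the conjecture reduces to showing that these two families are metric lines inside $\R^{3}_F$.

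For that I would work entirely in $\R^{3}_F$, via a calibration or, equivalently, a cut-time estimate. Along a homoclinic or direct-heteroclinic geodesic the reduced trajectory is asymptotic as $t\to\pm\infty$ to an equilibrium of $H_F$, so the conserved momenta have well-defined limits; the goal is to upgrade this into a $1$-Lipschitz function $f$ on $\R^{3}_F$ --- a Busemann-type function for $\bar\gamma$ --- satisfying $\frac{d}{dt}f(\bar\gamma(t))\equiv1$, whence $f(\bar\gamma(b))-f(\bar\gamma(a))=|a-b|\le dist_{\R^{3}_F}(\bar\gamma(a),\bar\gamma(b))\le|a-b|$ and $\bar\gamma$ is a metric line. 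Equivalently, and probably more robustly, one shows that the cut time of $\bar\gamma$ --- the minimum of its first conjugate time and its first Maxwell time --- is infinite, using that the \enquote{period} of the asymptotic $x$-motion is infinite, and then recovers $\bar\gamma$ as a limit of the minimizing segments $\bar\gamma|_{[-T,T]}$. For those $F$ for which $\R^{3}_F$ is a model space whose metric lines are already classified (e.g. $\R\times$Heisenberg, the Engel group, the Martinet structure) one simply invokes that classification.

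The \enquote{only if} part --- that $x$-periodic and turn-back geodesics are \emph{not} metric lines --- must be argued in $\Je$ itself, since the submersions transport the metric-line property only downward. For the $x$-periodic family a clean argument uses the Carnot dilations: the reduced period $T$ being finite, the body velocity $\gamma^{-1}\dot\gamma$ is $T$-periodic, hence $\gamma(nT)=\gamma(0)\cdot v^{n}$ for a fixed $v=\exp(w)$; the Carnot gauge of $\exp(nw)$ grows like $n\,|\bar w|$ when the first-layer component $\bar w$ is nonzero and like $n^{1/2}$ (or slower) when $\bar w=0$, and in either case $|\bar w|<T$ because the reduced dynamics is non-trivial, so $dist_{\Je}(\gamma(0),\gamma(nT))=o(nT)$ and $\gamma|_{[0,nT]}$ stops minimizing for $n$ large. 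Turn-back geodesics have infinite reduced period, so this trick is unavailable; there I would instead detect a conjugate point by analysing the Jacobi equation of $H_F$ across the \enquote{fold} of the $\R^{2}$-projection --- a computation internal to the integrable reduced system --- again concluding that minimality is lost after a finite time.

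The main obstacle is the positive direction for a \emph{general} polynomial $F$: verifying the global bound $|\Hgrad f|\le1$, or equivalently ruling out all conjugate and Maxwell points along $\bar\gamma$, requires rather precise control of the \sR distance of $\R^{3}_F$, which at present seems within reach only for $F$ of small degree or with special root configurations, where $\R^{3}_F$ carries an extra symmetry. This is exactly why the present paper offers only a partial classification: the expectation is to settle the line case in full and the homoclinic and direct-heteroclinic cases for $F$ in a restricted family, leaving the general polynomial --- and the sharp exclusion of every turn-back geodesic --- for the sequel.
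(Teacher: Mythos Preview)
Your overall architecture --- push the question down to $\R^{3}_F$ via the \sR submersion $\pi_F$ and lift metric lines back up --- is exactly the paper's, and the line case and the ``only if'' direction (periodic and turn-back fail to minimize) are handled essentially as you say, the latter already in \cite{RM-ABD}.

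Where your proposal diverges, and where there is a real gap, is the positive direction inside $\R^{3}_F$. You propose a global calibration or Busemann function $f$ with $|\Hgrad f|\le 1$ and $\tfrac{d}{dt}f(\bar\gamma)\equiv 1$. The paper rules this out explicitly: the calibrations built in \cite[Section~5]{RM-ABD} are only local and \emph{do not admit a global extension}, so this route is blocked. The paper's actual argument is a compactness scheme rather than a calibration or a Jacobi/cut-time computation: for the target geodesic $c_d$ one takes minimizers $c_n$ from $c_d(-n)$ to $c_d(n)$, uses a uniform bound on the cost $Cost(c_n,[0,T_n])$ to trap the sequence in a compact set (Lemma~\ref{lem:d-t-bounded-trav-interval}, Proposition~\ref{prp:d-r-ini-con-comp}), extracts a limit $c_\infty$, and then identifies $c_\infty$ with $c_d$ up to isometry. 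The identification step is the crux, and it is algebraic: one shows that the period map $\Theta_2$ is \emph{injective} on the one-parameter pencil $Pen^+\subset Pen_F$ of candidate asymptotic limits (Corollary~\ref{cor:d-t-The}, Lemma~\ref{prp:d-t-F-uniqueness}). This injectivity is exactly what the paper can verify for all direct-type heteroclinics and for the special homoclinic family $F(x)=\pm(1-bx^{2n})$.

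There is also a structural obstruction your sketch misses. For the homoclinic geodesic attached to $F(x)=1-2x^{2n+1}$, Theorem~\ref{th1:h-no-me-lin} shows that the projection $\bar\gamma$ is \emph{not} a metric line in $\R^{3}_F$: one constructs by hand a shorter horizontal competitor that detours into the region $x<0$, where the odd power makes $F>1$, and beats $\bar\gamma$ for large $n$. So for general homoclinic $F$ the intermediate space $\R^{3}_F$ is the wrong arena; the conjecture cannot be reduced to it, and any proof of the remaining homoclinic cases must live in $\Je$ itself --- the paper's conclusion suggests proving injectivity of the full period map there.
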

It is well know that the line geodesics are metric lines, see Corollary \ref{cor:sR-sub-metric-line}. In \cite[Theorem 1]{RM-ABD}, we proved geodesics of type $x$-periodic and heteroclinic turn-back are not metric lines.  Theorem \ref{th:main-1} is the first main result of this work and proves Conjecture \ref{con:1} for the case of heteroclinic of the direct-type geodesics.
\begin{theorem}\label{th:main-1}
Heteroclinic of the direct-type geodesics are metric lines in $\Je$.
\end{theorem}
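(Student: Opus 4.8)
\emph{Proof proposal.} The plan is to push the whole question down from $\Je$ to the $3$-dimensional space $\R^{3}_F$, settle it there, and pull the conclusion back up. First I would recall that the projection $\Pi\colon\Je\to\R^{3}_F$ (through which the abelianization $\Je\to\R^{2}\simeq\Je/[\Je,\Je]$ factors) is a \sR submersion whose fibres are everywhere transverse to the horizontal distribution; hence $\Pi$ restricts to a fibrewise linear isometry of horizontal spaces, so \emph{every} horizontal curve $\sigma$ in $\Je$ has the same length as $\Pi\circ\sigma$, and $dist_{\Je}(p,q)\ge dist_{\R^{3}_F}(\Pi p,\Pi q)$. Since the heteroclinic direct-type geodesic $\gamma$ attached to a pair $(F,I)$ projects to the geodesic $\bar\gamma:=\Pi\circ\gamma$ whose reduced $x$-dynamics is again the $H_F$-system \eqref{eq:fund-eq-jet-space} with the same hill interval $I$ (Definition \ref{def:hill-int}), hence is again heteroclinic of the direct type and has infinite length, it suffices to prove: \textbf{$\bar\gamma$ is a metric line in $\R^{3}_F$.} Indeed, given that, for $[a,b]\subset\R$ and any horizontal competitor $\sigma$ in $\Je$ from $\gamma(a)$ to $\gamma(b)$ one gets $L_{\Je}(\sigma)=L_{\R^{3}_F}(\Pi\circ\sigma)\ge dist_{\R^{3}_F}(\bar\gamma(a),\bar\gamma(b))=|b-a|=L_{\Je}(\gamma|_{[a,b]})$, which closes the loop. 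This is the structural point that makes the problem tractable: a $(k+2)$-dimensional minimization becomes a $3$-dimensional one.

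Next I would build a calibration for $\bar\gamma$ inside $\R^{3}_F$. In adapted coordinates $(x,y,z)$ one may take an orthonormal frame of contact type $X_1=\partial_x$, $X_2=\partial_y+b_F(x)\partial_z$, where $b_F$ is the degree-$\le k$ polynomial attached to $F$. The cyclic variables $y,z$ yield conserved momenta $\beta,c$; the reduced motion of $x$ runs in the potential $V(x)=\tfrac12\bigl(\beta+c\,b_F(x)\bigr)^2$ at energy $\tfrac12$, and along $\bar\gamma$ one has $p_x^2+\bigl(\beta+c\,b_F(x)\bigr)^2\equiv 1$ with $x$ monotone across $I=[x_-,x_+]$ between the two consecutive critical points $x_\pm$ of $b_F$; here direct-type means precisely that $\beta+c\,b_F(x_\pm)$ has the \emph{same} sign at the two ends (normalize it to $+1$), whereas turn-back would be opposite signs. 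On the slab $\Omega$ lying over the connected component of $\{\,|\beta+c\,b_F(x)|\le 1\,\}$ that contains $I$, I would set $S(x,y,z)=\beta y+c z+\phi(x)$ with $\phi(x)=\int_{x_-}^{x}\sqrt{1-(\beta+c\,b_F(s))^2}\,ds$. Then $\Hgrad S=\phi'(x)X_1+\bigl(\beta+c\,b_F(x)\bigr)X_2$, so $|\Hgrad S|\le 1$ on $\Omega$ with equality exactly over $I$; moreover $\Hgrad S=\dot{\bar\gamma}$ along $\bar\gamma$ and $S(\bar\gamma(b))-S(\bar\gamma(a))=\int_a^b|\dot{\bar\gamma}|\,dt=|b-a|$. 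The standard calibration estimate $L(\sigma)\ge\int\langle \Hgrad S,\dot\sigma\rangle=S(\sigma(b))-S(\sigma(a))$ then gives $L(\sigma)\ge|b-a|$ for every horizontal competitor $\sigma$ from $\bar\gamma(a)$ to $\bar\gamma(b)$ that \emph{stays inside $\Omega$}.

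The remaining step, which I expect to be the real difficulty, is to rule out shortcuts that leave the slab $\Omega$. Since $b_F$ is a nonconstant polynomial, $|\beta+c\,b_F(x)|\to\infty$, so $S$ cannot be continued as a $1$-Lipschitz function: the eikonal right-hand side $1-(\beta+c\,b_F(x))^2$ turns negative outside $\Omega$. I would attack excursions in one of two ways. One way is to patch $S$ beyond the end-planes $\{x=x_\pm\}$ by a cruder Lipschitz barrier built from the abnormal (line) geodesics $\ell_\pm$ lying over $x=x_\pm$, which are metric lines by Corollary \ref{cor:sR-sub-metric-line}, together with the observation that an excursion across $x_\pm$ and back is, to leading order, a turn-back heteroclinic segment, and those are already known not to minimize by \cite[Theorem 1]{RM-ABD}. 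A second way is to produce a \sR length-nonincreasing retraction of the complement of $\Omega$ onto $\partial\Omega$, so that any competitor can be replaced by one inside $\Omega$ without increasing length, reducing to the calibrated case. Either route gives $dist_{\R^{3}_F}(\bar\gamma(a),\bar\gamma(b))\ge|b-a|$; the reverse inequality is trivial, so $\bar\gamma$ is a metric line in $\R^{3}_F$ and, by the first paragraph, $\gamma$ is a metric line in $\Je$.

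The main obstacle is precisely this confinement analysis: the calibration by itself only certifies minimality against competitors trapped in $\{\,|\beta+c\,b_F(x)|\le 1\,\}$, and making the barrier argument (or the length-nonincreasing retraction) rigorous — in particular controlling the vertical coordinate $z$ along horizontal curves that wander far in the $x$-direction — is where the genuine work lies.
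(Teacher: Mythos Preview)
Your reduction to $\R^3_F$ via the \sR submersion $\pi_F$ and the appeal to Proposition \ref{prp:sR-sub-metric-line} is exactly right and matches the paper. Your calibration $S(x,y,z)=\beta y+cz+\phi(x)$ on the slab $\Omega$ is also correct --- it is precisely Lemma \ref{lem:d-t-cal}, which the paper imports from \cite[Section 5]{RM-ABD} and uses only as an auxiliary tool. But the paper is explicit (see the paragraph ``Our Method To Prove A Geodesic Is A Metric Line'' in the introduction) that this local calibration \emph{does not admit a global extension}, and your two proposed work-arounds for competitors that exit $\Omega$ both have real problems. A length-nonincreasing retraction onto $\partial\Omega$ is not available here: the constraint $dz=F(x)\,dy$ means that pushing the $x$-coordinate of a horizontal curve back toward $\partial\Omega$ forces changes in $z$ that you cannot control, so there is no horizontal-length-contracting projection compatible with the distribution. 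The barrier-patching idea is equally loose: an excursion of a competitor into $\{|G(x)|>1\}$ is not in any useful sense a piece of a turn-back heteroclinic geodesic, and the non-minimality result from \cite[Theorem 1]{RM-ABD} concerns entire geodesics for specific pairs $(F,I)$, not arbitrary horizontal arcs. You yourself flag this confinement step as ``where the genuine work lies''; as written it is a genuine gap, not a detail.

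The paper abandons the global-calibration route and instead runs a compactness argument in $\R^3_{F_d}$ (Theorem \ref{th1:d-t-me-lin}). One fixes $T$, takes a sequence $c_n$ of minimizing $\R^3_{F_d}$-geodesics joining $c_d(-n)$ to $c_d(n)$, and uses the $Cost$ map (Definition \ref{def:cost-f-time}) together with Lemma \ref{lem:d-t-bounded-trav-interval} to show that, after a time shift placing $y_n=0$ at $t=0$ (Proposition \ref{prp:d-r-ini-con-comp}), the sequence lies in a fixed compact $Min(K_N,\mathcal{T}_N)$. Arzel\`a--Ascoli gives a limit $c_\infty$; the calibration Lemma \ref{lem:d-t-cal} is used only to rule out that $c_\infty$ is a line or abnormal (Corollary \ref{cor:non-line-geo}). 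The decisive new ingredient --- the one your outline is missing --- is the injectivity of the period map $\Theta_2(G,[0,1])$ on the one-parameter family $Pen_d^+\subset Pen_{F_d}$ of direct-type pencils (Corollary \ref{cor:d-t-The}, Lemma \ref{prp:d-t-F-uniqueness}): this forces the polynomial of $c_\infty$ to be $F_d$ itself, so $c_\infty$ differs from $c_d$ by a translation in $Iso(\R^3_{F_d})$, and Corollary \ref{cor:iso-metr} finishes. Without something playing the role of this injectivity lemma you cannot pin down the limit and close the argument.
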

Conjecture \ref{con:1} remains open for homoclinic geodesics. Theorem \ref{th:main-2} is the second principal result of this work and provides a family of homoclinic geodesics that are metric lines. 
\begin{theorem}\label{th:main-2}
The homoclinic-geodesic defined by the polynomial $F(x) = \pm(1 - bx^{2n})$ and hill interval $[0,\sqrt[2n]{\frac{2}{b}}]$ is a metric line in $\Je$ for all $k\geq 2n$ and $b>0$.
\end{theorem}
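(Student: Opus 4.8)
The plan is to carry the whole argument out in the intermediate space $\R^3_F$. By the construction of $\R^3_F$, the geodesic of $\Je$ attached to the pair $(F,[0,\sqrt[2n]{2/b}])$ is the horizontal lift, through the submetry $\pi\colon\Je\to\R^3_F$, of the geodesic $\bar\gamma$ of $\R^3_F$ attached to the same data; in particular $\bar\gamma=\pi\circ\gamma$ is unit speed. A submetry is $1$--Lipschitz, so for every $a<b$
\[
dist_{\R^3_F}\bigl(\bar\gamma(a),\bar\gamma(b)\bigr)\ \le\ dist_{\Je}\bigl(\gamma(a),\gamma(b)\bigr)\ \le\ \operatorname{length}\bigl(\gamma|_{[a,b]}\bigr)=|a-b|.
\]
Hence, exactly as in Corollary~\ref{cor:sR-sub-metric-line} and in the proof of Theorem~\ref{th:main-1}, it is enough to prove that $\bar\gamma$ is itself a metric line in $\R^3_F$; that reduced statement, on a $3$--manifold, is what I would attack.

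Next I would describe $\bar\gamma$ explicitly. In the coordinates of $\R^3_F$ adapted to $F$, the rank--two distribution has a distinguished orthonormal frame and $\R^3_F$ carries a two--dimensional abelian group of translational isometries; reducing the geodesic flow of $\R^3_F$ by that group yields the same reduced one--degree--of--freedom Hamiltonian $H_F$ of \eqref{eq:fund-eq-jet-space}, whose $(x,p_x)$--motion has potential $\tfrac12F(x)^2$ at energy $\tfrac12$. For $F(x)=\pm(1-bx^{2n})$ with $b>0$ the function $\tfrac12F(x)^2$ has a (degenerate, if $n\ge2$) local maximum of height $\tfrac12$ at $x=0$, so on the level $H_F=\tfrac12$ the orbit with hill interval $[0,\sqrt[2n]{2/b}]$ is precisely the homoclinic loop based at the hilltop $x=0$ and turning around at $x=\sqrt[2n]{2/b}$. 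Therefore $x(t)$, $p_x(t)=\pm\sqrt{1-F(x(t))^2}$, and the two remaining \enquote{area-type} coordinates of $\bar\gamma$ (which are primitives of $F(x(t))$ and of $F(x(t))^2$) are all produced by explicit quadratures, with the time--reversal symmetry coming from $x(-t)=x(t)$. Since $F(x(t))^2\to1$ and $p_x(t)\to0$ as $t\to\pm\infty$, the curve $\bar\gamma$ is asymptotic, as $t\to\pm\infty$, to the line geodesics of $\R^3_F$ corresponding to the equilibrium $(x,p_x)=(0,0)$ of $H_F$, and those line geodesics are metric lines by the line case.

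The core of the proof is to exhibit a calibration of $\bar\gamma$: a smooth function $f\colon\R^3_F\to\R$ with $|\Hgrad f|\le1$ everywhere and $f(\bar\gamma(t))=t$ for all $t$. Granting this, for all $a<b$
\[
|a-b|=\bigl|f(\bar\gamma(a))-f(\bar\gamma(b))\bigr|\le dist_{\R^3_F}\bigl(\bar\gamma(a),\bar\gamma(b)\bigr)\le|a-b|,
\]
so $\bar\gamma$ is a metric line and we are done. Note that $f(\bar\gamma(t))=t$ forces $\Hgrad f$ to equal the unit velocity of $\bar\gamma$ along $\bar\gamma$, which is consistent with the bound $|\Hgrad f|\le1$ only because $p_x^2+F(x)^2\equiv1$ on the energy level. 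I would construct $f$ by hand, guided by the two--sided Busemann function of $\bar\gamma$ (equivalently, of the asymptotic line geodesics), the point being that for $F(x)=\pm(1-bx^{2n})$ that Busemann function can be written in closed form from the quadratures of the previous step. On the slab $\{x\in[0,\sqrt[2n]{2/b}]\}$ swept out by $\bar\gamma$ one has $F(x)^2\le1$, so there one of the two \enquote{area-type} coordinates already has horizontal gradient of norm $|F(x)|\le1$; correcting it by a suitable function of the other two coordinates gives, on this slab, a function whose horizontal gradient is everywhere of norm $\le1$ and, along $\bar\gamma$, is the unit velocity of $\bar\gamma$ — so that $f(\bar\gamma(t))=t$ holds by design.

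The crux of the argument — the step I expect to be hardest — is the \emph{global} gradient bound: extending $f$ across the region where $x$ leaves the hill interval, where $F(x)^2>1$ and the area coordinate is no longer $1$--Lipschitz, so $f$ must be tapered off while changing neither its values nor its gradient along $\bar\gamma$ and never exceeding unit horizontal gradient anywhere. This gluing is precisely where the hypotheses enter: $k\ge2n$ only ensures that $F$ is an admissible polynomial for $\Je$ (of degree at most $k$); $b>0$ is what makes $x=0$ a genuine hill (for $b\le0$ the hill interval degenerates to a point); and the evenness and monotonicity of $x\mapsto bx^{2n}$ are what let one carry out the taper while keeping $|\Hgrad f|\le1$. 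For a general homoclinic polynomial $F$ this last step need not go through — which is exactly why Conjecture~\ref{con:1} remains open beyond the family in Theorem~\ref{th:main-2}. Finally, having shown that $\bar\gamma$ is a metric line in $\R^3_F$, the first paragraph upgrades this to the assertion that $\gamma$ is a metric line in $\Je$, which completes the proof.
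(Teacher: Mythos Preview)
Your reduction to the magnetic space $\R^3_F$ via the sub-Riemannian submersion $\pi_F$ is correct and is exactly how the paper begins. The gap is everything after that: you propose to finish by constructing a global calibration $f$ on $\R^3_F$, but you never actually build it, and the paper states explicitly that this cannot be done with the known calibration functions. In the introduction the authors write that ``the local Calibration functions \dots\ do not have a global extension,'' and just before Corollary~\ref{cor:h-non-line-geo} they note that for the homoclinic geodesic the calibration method ``does not work on an interval containing the time $t=0$,'' i.e.\ across the turning point $x=1$. Your ``two-sided Busemann'' idea runs into exactly this: the natural candidate $f=y$ has $|\Hgrad y|\equiv 1$, but along $\bar\gamma$ one has $\dot y=F(x(t))$, which drops to $-1$ at the turn, so $y(\bar\gamma(t))\ne t$; any correction that fixes this on the slab must then be tapered where $F(x)^2>1$, and you give no construction for that taper. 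That taper is not a technicality---it is the whole problem, and the paper's method was designed precisely to avoid it.

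What the paper actually does in $\R^3_{F_h}$ is a compactness argument, not a calibration. For each $n$ take a minimizer $c_n$ from $c_h(-n)$ to $c_h(n)$; after a time shift the sequence sits in a fixed compact set (Proposition~\ref{prp:d-r-ini-con-comp} and Lemma~\ref{lem:d-t-bounded-trav-interval}), so Arzel\`a--Ascoli gives a subsequential limit $c_\infty$ that is minimizing on every compact interval. One then rules out that $c_\infty$ is a line or abnormal geodesic (Corollary~\ref{cor:h-non-line-geo}, which uses $Cost_y(c_h,[-n,n])<0$; this is where the \emph{even} exponent $2n$ enters, and Theorem~\ref{th1:h-no-me-lin} shows the step fails for $1-2x^{2n+1}$). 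Finally one identifies $c_\infty$ with $c_h$ up to an isometry: the asymptotic endpoint conditions force $G=a+bF_h$ with $a+b=1$, and the period map $\Theta_2$ is shown to be injective on that one-parameter pencil $Pen_h^+$ (Corollary~\ref{cor:h-The}, relying on the scaling identity for $F_h(x)=1-2x^{2n}$ and on $\Theta_2(F_h,[0,1])<0$ from Lemma~\ref{lem:h-The}). That injectivity is the real reason the special form $\pm(1-bx^{2n})$ is needed, not any gluing property of a calibration.
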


\subsection*{Previous Results} In \cite{ardentov1,ardentov2,ardentov3,ardentov4}, A. Andertov and Y. Sachkov proved Conjecture \ref{con:1} for the case $k=1$ and $k=2$ using optimal synthesis. In \cite[Theorem 2]{RM-ABD}, we showed that a family of heteroclinic of the direct-type geodesics are metric lines.

The case $k=1$ corresponds to $J^1(\mathbb{R},\mathbb{R})$ being the Heisenberg group where the geodesics are $x$-periodic or geodesic lines.
The case $k=2$ corresponds to $J^2(\mathbb{R},\mathbb{R})$ being Engel's group, denoted by $\Eng$. Besides geodesic lines, up to a Carnot translation and dilation $\Eng$ has a unique metric line such that its projection to the plane $\R^{2} \simeq \Eng/[\Eng,\Eng]$ is the Euler-soliton.  The family of metric lines defined by Theorem \ref{th:main-2} is the generalization of A. Andertov and Y. Sachkov's  result from \cite{ardentov1,ardentov2,ardentov3,ardentov4}. More specific, when $n=1$ then the geodesic defined by the polynomial $F(x) = \pm(1 - bx^{2})$ is the one whose projection to the plane $\R^{2}$ is the Euler-soliton. See \cite[Section 4]{ABD} for more details about the Euler-Elastica and geodesics in $\Eng$. In addition, see \cite[sub-sub-Chapter 7.8.3]{agrachev} or \cite[Chapter 14]{jurdjevic} for the relation of Euler-Elastica and some \sR geodesics as the rolling problem and the Euclidean group.

\begin{figure}%
    \centering
    {{\includegraphics[width=2.5cm]{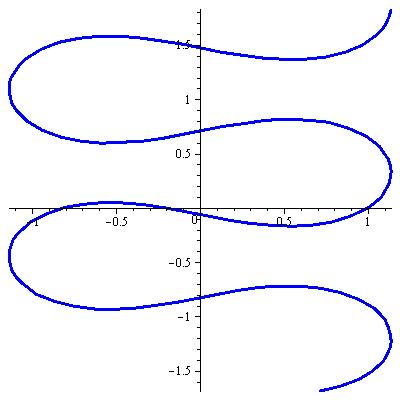} }}
    \qquad
    {{\includegraphics[width=2.5cm]{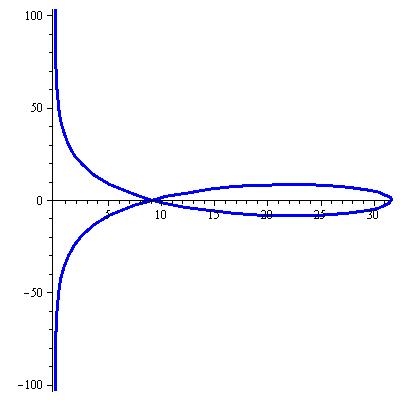} }}%
    \qquad
    {{\includegraphics[width=2.5cm]{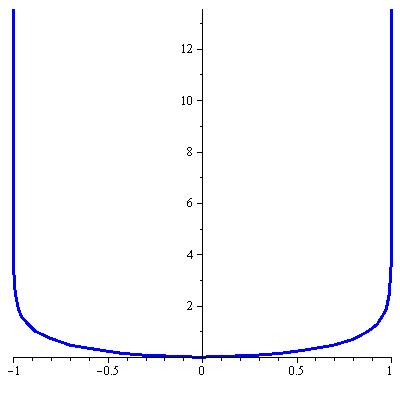} }}
    \qquad
    {{\includegraphics[width=2.5cm]{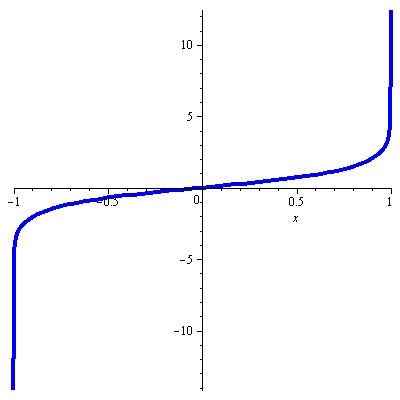} }}
    \caption{The images show the projection to $\R^2 \simeq \Je/[\Je,\Je]$, with coordinates $(x,\theta_0)$, of geodesics in $\Je$. The first panel presents a generic $x$-periodic geodesic, the second panel displays the projection of a homoclinic geodesic, which is the Euler-soliton solution to the Euler-Elastica problem and correspond to case $n =1$ from Theorem \ref{th:main-2}. The third panel presents the projection of a turn-back geodesic, the forth panel displays the projection of a heteroclinic of the direct-type geodesic.}    
    \label{fig:perio-hom-curve}
\end{figure}

\subsection*{Our Method To Prove A Geodesic Is A Metric Line}

The optimal synthesis and the weak KAM theory are the two classical methods to prove that a geodesic is a metric line. See \cite[sub-Chapter 9.4]{jurdjevic}, \cite[sub-Chapter 13.4]{agrachev} or \cite[Chapter 13]{agrachev2004control} for an introduction to the optimal synthesis. See \cite{fathi2008weak} for more details of weak KAM theory on the \Ri context, and see \cite[sub-sub-chapter 1.9.2]{tour} or \cite[Section 5]{RM-ABD} on the \sR context. The optimal synthesis requires the explicit integration of the geodesic equations, and the weak KAM theory needs a global Calibration function. In both cases, the integrability of the flows is a necessary condition. Although the \sR geodesic flow in $\Je$ is integrable, see \cite[Theorem 1.1]{ABD}, these methods cannot prove the Conjecture \ref{con:1}. On one side, the explicit integration of the equation of motion is impossible in the general case. On the second side, the local Calibration functions, found in \cite[Section 5]{RM-ABD} or \cite[sub-Section 3.2]{ABD-No-perio}, do not have a global extension.

Besides Theorem \ref{th:main-1} and \ref{th:main-2}, the main contribution of this work is the formalization of the method used in \cite{RM-ABD}. We will consider a \sR manifold $\R^3_F$, called the magnetic space, and a \sR submersion $\pi_F: \Je \to \pi_F$. Thanks to the fact that lift of metric line is a metric line (Proposition \ref{prp:sR-sub-metric-line}), it is enough to prove that if $\gamma(t)$ is a \sR geodesic corresponding to a polynomial $F$ and satisfying  the conditions of Theorems \ref{th:main-1} or \ref{th:main-2}, then the projection $\pi_F(\gamma(t))$ is a metric line in $\R^3_F$. In other words, we reduce the problem of studying metric lines in $\Je$ to studying metric lines in the magnetic space $\R^3_F$. Theorems \ref{th1:d-t-me-lin} and \ref{th1:h-me-lin} show that the curve $c(t) := \pi_F(\gamma(t))$ is a metric line, where $\gamma(t)$ is \sR geodesic given by Theorems \ref{th:main-1} and \ref{th:main-2}, respectively. To prove Theorems \ref{th1:d-t-me-lin} and \ref{th1:h-me-lin}, we consider a sequence of minimizing \sR geodesics $c_n(t)$ joining every time farther away points on the geodesic $c(t)$, see Figure \ref{fig:h}. We show that the sequence has a convergent subsequence $c_{n_j}(t)$ converging to a minimizing geodesic $c_{\infty}(t)$ corresponding to the polynomial $F$, since every two \sR geodesics corresponding to the polynomial $F$ are related by \sR isometry we conclude that $c(t)$ is a metric line.

\subsection*{Outline} 

Section \ref{sec:preliminary} introduces the preliminary result necessary to prove Theorem \ref{th:main-1} and \ref{th:main-2}. Sub-Section \ref{sub-sec:Je-as-sR} briefly describes $\Je$ as a \sR manifold and summarizes some previous results from \cite{RM-ABD}. Between them, the most important are: the  \textbf{Background Theorem} establishing the correspondence between \sR geodesics and the pair $(F,I)$, the classification of \sR geodesic, the formal definition of a \sR submersion and Proposition \ref{prp:sR-sub-metric-line}. Sub-Section \ref{sec:mag-spa} presents the magnetic space $\R^3_F$ and some previous results from \cite{RM-ABD}: The correspondence between \sR geodesic in the magnetic space $\R^3_F$ and the pair $(G,I)$ where $G$ is a polynomial in a two-dimensional space $Pen_F$, the period map $\Theta(G,I)$, and an upper bound for the cut time. In addition, sub-Section \ref{sec:mag-spa}  provides the relation between the \sR geodesic in $\R^3_F$ and $\Je$,  the cost function definition, and some sequence of \sR geodesics' properties.

The main goal of Section \ref{sec:d-t-geodesics} is to prove Theorem \ref{th1:d-t-me-lin}. Sub-Section \ref{sub-sec:d-t-mag-spa} presents a particular magnetic space for each heteroclinic geodesic of the direct-type and some essential properties of this space. In particular, Corollary \ref{cor:d-t-The} says that the Period map $\Theta(G,I)$ is one-to-one when restricted to space of heteroclinic geodesics of the heteroclinic direct-type. Sub-sub-Section \ref{sub-sub:set-up-proof} sets up the proof, sub-sub-Section \ref{sub-sec:proof-theo-1} presents the proof of Theorem \ref{th1:d-t-me-lin} and sub-sub-Section \ref{sub-sec:proof-main-the-1} provides the formal proof of Theorem \ref{th:main-1}. 

The main goal of Section \ref{sec:h-geodesics} is to prove Theorem \ref{th1:h-me-lin} and has a similar structure to Section \ref{sec:d-t-geodesics}. In addition, Section \ref{sec:h-geodesics} introduces Theorem \ref{th1:h-no-me-lin}, which says that the \sR geodesic corresponding to the polynomial $F(x) = 1-2x^{2n+1}$ is not a metric line in the magnetic space $\R^3_F$.

\subsection*{Acknowledgment}
I want to acknowledge the mathematics department at the University of California Santa Cruz for allowing me to be in the Ph.D. program and helping me succeed in my degree. This work resulted from years of working, talks, and conversations in the mathematics department. In particular, I express my gratitude to my advisor, Richard Montgomery, for setting up the problem and his ideas to approach it. I thank Andrei Ardentov, Yuri Sachkov, and Felipe Monroy-Perez, for e-mail conversations throughout this work and whose work inspired me. This paper was developed with the support of the scholarship (CVU 619610) from  "Consejo de Ciencia y Tecnologia"  (CONACYT)

\section{Preliminary}\label{sec:preliminary}

Here we will introduce the necessary results to prove Theorems \ref{th:main-1} and \ref{th:main-2}. 

\subsection{$\Je$ as a \sR manifold}\label{sub-sec:Je-as-sR}
We describe here briefly the \sR structure on $\Je$. For more details about $\Je$ as a Carnot group and \sR manifolds, see \cite[sub-Chapter 10.2]{agrachev}, \cite[sub-Section 2.1]{RM-ABD}, \cite[Section 2]{ABD} or \cite[Section 3]{CarnotJets}. We see $\Je$ as $\R^{k+2}$, using $(x,\theta_0,\dots,\theta_k)$ as global coordinates, then $\Je$ is endowed with a natural rank 2 distribution $\Di \subset T\Je$ characterized by the $k$-Pfaffian equations
\begin{equation*}
0 = d\theta_i - \frac{1}{i!}x^i d\theta_0,\qquad i = 1,\dots,k.
\end{equation*}
$\Di$ is globally framed by two vector fields
\begin{equation}\label{eq:frame}
X = \frac{\partial}{\partial x}\;\; \text{and} \;\; Y = \sum_{i = 0}^k \frac{x^i}{i!} \frac{\partial}{\partial \theta_i}.
\end{equation}
We declare these two vector fields to be orthonormal to define the \sR structure on $\Je$. The \sR metric is given by restricting $ds^2 = dx^2 + d\theta_0^2$ to $\Di$. $\Je$ is endowed with a canonical projection $\pi: \Je \to \R^2 \simeq \Je / [\Je,\Je]$, which in coordinates is given by $\pi(x,\theta_0,\dots,\theta_k) = (x,\theta_0)$. 

\subsubsection{Reduced System}

As we proved in \cite[Appendix A]{RM-ABD}, a geodesic in $\Je$ is determined by the pair $(F,I)$. Let $F$ be a polynomial $F$ of degree $k$ or less, then the reduced Hamiltonian system $H_F$ is given by
\begin{equation}\label{eq:fund-eq-jet-space}
H_F = \frac{1}{2} ( p_x^2 + F^2(x)). 
\end{equation}
The condition $\frac{1}{2} = H_F$ implies that the reduced dynamics occur in the hill interval $I$ and the \sR geodesic is parametrized by arc length. Let us formalize the hill interval definition.
\begin{definition}\label{def:hill-int}
 We say that a closed interval $I$ is a hill interval associated to $F(x)$, if $|F(x)| <1$ for every $x$ in the interior of $I$ and $|F(x)| = 1$ for every $x$ in the boundary of $I$. If $I$ is of the form $[x_0,x_1]$, then we call $x_0$ and $x_1$ the endpoints of the hill interval. We say that $hill(F)$ is the hill region of $F$ if $hill(F)$ is union of all the hill intervals of $F$.  
\end{definition}
By definition, if $F(x)$ is not a constant polynomial then $I$ is compact. In contrast, the constant polynomial $F(x)$  has hill interval $I = \R$ if $|F(x)| \leq 1$ and $I$ is equal to a single point if $F(x) = \pm 1$.

Here, we prescribe the method to build a \sR geodesic: first, find a solution to the reduced system \eqref{eq:fund-eq-jet-space} with energy $\frac{1}{2} = H_F$. Second, having found the solution $(p_x(t),x(t))$, we define a curve $\gamma(t)$ in $\Je$ by the following equation
\begin{equation*}
\dot{\gamma}(t) = \dot{x}(t)X(\gamma(t)) + F(x(t))Y(\gamma(t))\;\;\;\text{where} \;\;\; \dot{x}(t) = p(t).
\end{equation*}
The \textbf{Background Theorem} establishes the correspondent between the pair $(F,I)$ and the \sR geodesics on $\Je$.
\begin{backgroundtheorem}\label{background}
The above prescription yields a geodesic in $J^k$ parameterized by
arc length.  Conversely, any arc length parameterized geodesic in $J^k$ can be achieved
by this prescription applied to some polynomial $F(x)$ of degree   $k$ or less. 
\end{backgroundtheorem}
The \textbf{Background Theorem} was proved first in \cite{Monroy1,Monroy2,Monroy3}, later we gave an alternative proof in \cite[Appendix A]{RM-ABD}.

\subsubsection{Classification Of Geodesic In Jet Space} \label{sec:clas-geo-je}
Let $\gamma(t)$ be a geodesic in $\Je$ corresponding to the pair $(F,I)$, in the case when $F(x)$ is no constant polynomial let assume $I = [x_0,x_1]$, then $\gamma(t)$ is only one of the following options:
\begin{itemize}
\item We say that a geodesic $\gamma(t)$ is a line if the projected curve $\pi(\gamma(t))$ is a line in $\R^{2}$. Line geodesics correspond to constant polynomial or trivial solutions of the reduced dynamics.
 
\item We say $\gamma(t)$ is $x$-periodic if its reduced dynamics is periodic. The reduced dynamics is periodic if and only if $x_0$ and $x_1$ are regular points of $F(x)$.

\item We say $\gamma(t)$ is homoclinic if its reduced dynamics is a homoclinic orbit. The reduced dynamics has a homoclinic orbit if and only if one of the points $x_0$ and $x_1$ is regular and the other is a critical point of $F(x)$.

\item We say $\gamma(t)$ is heteroclinic if its reduced dynamics is a heteroclinic orbit. The reduced dynamics has a heteroclinic orbit if and only if both points $x_0$ and $x_1$ are critical of $F(x)$.

\item We say a heteroclinic geodesic $\gamma(t)$ is turn-back if $F(x_0)F(x_1) = -1$.

\item We say a heteroclinic geodesic $\gamma(t)$ is direct-type if $F(x_0)F(x_1) = 1$.
\end{itemize}
See figure \ref{fig:perio-hom-curve} for a better undertanding of these names.

\subsubsection{Unitary Geodesics}

To prove Theorem \ref{th:main-1} and \ref{th:main-2}, we will introduce the concept of a unitary geodesic.
\begin{definition}\label{def:norm-geode}
We say a geodesic $\gamma(t)$ in $\Je$ corresponding to the pair $(F,I)$ is unitary if $I = [0,1]$. We say a heteroclinic of the direct-type geodesic (or homoclinic) $\gamma(t)$ is unitary, if in addition $F(x(t))  \to 1$ when $t \to \pm \infty$. 
\end{definition}
Let us denote by $Iso(\Je)$, the isometry group of $\Je$. The reflection $R_{\theta_0}(x,\theta_0,\theta_1,\dots,\theta_k) = (x,-\theta_0,\theta_1,\dots,\theta_k)$ is in $Iso(\Je)$. Then, to classify metric lines it is enough to study unitary geodesics, since if $\gamma(t)$ is a heteroclinic of the direct-type or homoclinic geodesic such that $F(x(t))  \to -1$ when $t \to \pm \infty$, then $R_{\theta_0}(\gamma(t))$ is such that $F(x(t))  \to 1$ when $t \to \pm \infty$.

\begin{corollary}\label{cor:d-t-F}
Let $\gamma(t)$ be a unitary heteroclinic of the direct-type geodesic for $F(x)$, then there exists $q(x)$  such that $F(x) = 1-x^{k_1}(1-x)^{k_2}q(x)$, where $1 < k_1$, $1 < k_2$, and $q(x)$ is  polynomial of degree $k-k_1-k_2$ such that $0 <x^{k_1}(1-x)^{k_2}q(x) < 2$ if $x$ is in $(0,1)$.
\end{corollary}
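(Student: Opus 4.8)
The statement is essentially a structural decomposition of the polynomial $F$ coming from the definition of a unitary heteroclinic geodesic of the direct-type, so the plan is to unwind the classification criteria and read off the required factorization. First I would recall from the classification in Section \ref{sec:clas-geo-je} that, since $\gamma(t)$ is heteroclinic, both endpoints of the hill interval $I$ are critical points of $F$; since $\gamma$ is \emph{unitary}, $I=[0,1]$, so $0$ and $1$ are critical points of $F$. Since it is of the direct-type, $F(0)F(1)=1$, and since it is unitary we have $F(x(t))\to 1$, which forces $F(0)=F(1)=1$ (rather than both $=-1$). Thus $1$ is a value attained at both $x=0$ and $x=1$, and both are critical: so $x=0$ and $x=1$ are each roots of multiplicity at least $2$ of the polynomial $1-F(x)$.

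Next I would set $P(x) := 1 - F(x)$, a polynomial of degree at most $k$ that vanishes to order $\ge 2$ at $x=0$ and at $x=1$. Hence $P(x) = x^{k_1}(1-x)^{k_2} q(x)$ for integers $k_1 \ge 2$, $k_2 \ge 2$, where $q(x)$ collects the remaining factors; comparing degrees gives $\deg q = k - k_1 - k_2$ (interpreting $q$ as a nonzero constant if this is $0$, and noting $k_1+k_2\le k$ is automatic since $P$ is not the zero polynomial — indeed $F$ is nonconstant here). This gives the claimed form $F(x) = 1 - x^{k_1}(1-x)^{k_2}q(x)$ with $1 < k_1$, $1 < k_2$.

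It remains to verify the inequality $0 < x^{k_1}(1-x)^{k_2} q(x) < 2$ for $x \in (0,1)$. By Definition \ref{def:hill-int}, $I=[0,1]$ being a hill interval for $F$ means $|F(x)| < 1$ on $(0,1)$, i.e. $-1 < F(x) < 1$, which translates directly into $0 < 1 - F(x) < 2$, that is $0 < x^{k_1}(1-x)^{k_2}q(x) < 2$ on $(0,1)$. So nothing beyond the hill-interval condition is needed here.

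The only genuinely substantive point — and the place I would be most careful — is justifying that $k_1$ and $k_2$ are \emph{strictly} greater than $1$, i.e. that $0$ and $1$ are critical points and not merely points where $|F|=1$. This is exactly the content of the classification: for a heteroclinic geodesic the reduced dynamics is a heteroclinic orbit of $H_F$, which (as recorded in Section \ref{sec:clas-geo-je}) happens precisely when both endpoints of the hill interval are critical points of $F$; equivalently, $p_x \to 0$ and $x \to$ endpoint asymptotically forces $F'=0$ there via the Hamiltonian equations for $H_F = \tfrac12(p_x^2 + F^2)$. Granting that, the multiplicity-$\ge 2$ vanishing of $1-F$ at those points — hence $k_1, k_2 \ge 2$ — is immediate, and the corollary follows.
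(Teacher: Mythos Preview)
Your proof is correct and follows essentially the same approach as the paper, which records the three conditions $F(0)=F(1)=1$, $F'(0)=F'(1)=0$, $|F(x)|<1$ on $(0,1)$ and then factors $1-F(x)$; your version simply makes explicit why those conditions hold (unitary direct-type heteroclinic forces the values and the criticality at the endpoints) and spells out the factorization that the paper labels ``Euclidean algorithm.''
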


\begin{proof}
By construction, $F(x)$ is such that 
$$F(0) = F(1) = 1, \;\; F'(0) = F'(1) = 0\;\; \text{and} \;\;|F(x)| < 1\;\text{if} \;\; x \;\; \text{is in} \;\;(0,1),$$ then using the Euclidean algorithm we find the desired result. 
\end{proof}

The following Proposition tells us that every geodesic in $\Je$ is related to unitary geodesic by a Carnot dilatation and translation.
\begin{proposition}\label{prp:normal-geo}
Let $\gamma(t)$ be a geodesic in $\Je$ associated to the pair $(F,I)$ and let $h(\tilde{x}) = x_0 + u\tilde{x}$ be the affine map taking $[0,1]$ to $I = [x_0,x_1]$ with $u:=x_1-x_0$. If $\hat{F}(h(\tilde{x})) = F(x)$ and $\hat{\gamma}(t)$ is the geodesic in $\Je$ corresponding to the pair $(\hat{F},[0,1])$. Then $\gamma(t)$ is related to $\hat{\gamma}(t)$ by Carnot dilatation and translation, that is 
$$ \gamma(t) =   \delta_{u} \hat{\gamma}(\frac{t}{u})*(x_0,0\dots,0),$$
where $\delta_{u}$ is the Carnot dilatation.
\end{proposition}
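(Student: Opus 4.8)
The plan is to check that the curve on the right-hand side, which I will call $\beta(t):=\delta_u\hat\gamma(t/u)*(x_0,0,\dots,0)$, is produced by the Background-Theorem prescription applied to the pair $(F,I)$ with the suitably rescaled reduced solution, and then to invoke uniqueness of solutions of the prescription ODE. Two elementary facts do all the work: first, the reduced solution of $H_F$ on $I$ is obtained from that of $H_{\hat F}$ on $[0,1]$ by the affine change of variables $h$ together with the time change $t\mapsto t/u$; second, the frame $\{X,Y\}$ is scaled by the factor $u$ under the Carnot dilation $\delta_u$ and is preserved by translation by the group element $(x_0,0,\dots,0)$.

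For the first fact, let $(\hat p(s),\hat x(s))$ be the reduced solution for $\hat F$ with $\hat p^2+\hat F(\hat x)^2=1$ and $\hat x(s)\in[0,1]$, and set $x(t):=h(\hat x(t/u))=x_0+u\,\hat x(t/u)$ and $p_x(t):=\hat p(t/u)$. Since $\hat F=F\circ h$, so that $\hat F'=u\,(F'\circ h)$, a direct chain-rule computation gives $\dot x=p_x$, $\dot p_x=-F(x)F'(x)$ and $p_x^2+F(x)^2=1$, while $x(t)\in h([0,1])=I$. Hence $(p_x(t),x(t))$ is a reduced solution of $H_F$ with hill interval $I$, so by the \textbf{Background Theorem} the geodesic $\gamma$ attached to $(F,I)$ is the solution of $\dot\gamma=p_x(t)X(\gamma)+F(x(t))Y(\gamma)$; likewise $\hat\gamma$ solves $\dot{\hat\gamma}=\hat p(s)X(\hat\gamma)+\hat F(\hat x(s))Y(\hat\gamma)$.

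For the second fact and the conclusion: the grading of $\Je$ gives $x$ and $\theta_0$ weight $1$ and $\theta_i$ weight $i+1$, so $\delta_u(x,\theta_0,\dots,\theta_k)=(ux,u\theta_0,u^2\theta_1,\dots,u^{k+1}\theta_k)$, and a short computation in coordinates gives $(\delta_u)_*X=uX$ and $(\delta_u)_*Y=uY$. Translation by $(x_0,0,\dots,0)$ preserves the frame $\{X,Y\}$ (invariance of the frame) and shifts the $x$-coordinate by $x_0$, because $x$ is the first component of the homomorphism $\Je\to\Je/[\Je,\Je]\simeq\R^2$. Writing $T$ for this translation and using $\tfrac{d}{dt}\hat\gamma(t/u)=\tfrac1u\dot{\hat\gamma}(t/u)$, one gets
\begin{equation*}
\dot\beta(t)=\tfrac1u\,T_*(\delta_u)_*\dot{\hat\gamma}(t/u)=\dot{\hat x}(t/u)\,X(\beta(t))+\hat F(\hat x(t/u))\,Y(\beta(t))=\dot x(t)\,X(\beta(t))+F(x(t))\,Y(\beta(t)),
\end{equation*}
using $(\delta_u)_*X=uX$, $(\delta_u)_*Y=uY$, $T_*X=X$, $T_*Y=Y$, $\dot{\hat x}(t/u)=\dot x(t)$ and $\hat F(\hat x(t/u))=F(x(t))$. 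Thus $\beta$ solves exactly the prescription ODE defining the geodesic for $(F,I)$, and since $\beta(0)=\delta_u\hat\gamma(0)*(x_0,0,\dots,0)=(x_0,0,\dots,0)$ matches $\gamma(0)$ under the natural normalization $\hat\gamma(0)=e$, uniqueness of solutions of this ODE with prescribed initial point forces $\gamma(t)=\beta(t)$ for all $t$.

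The content is routine; the only thing needing care is the bookkeeping linking the dilation to the translation — one must make sure $\delta_u$ carries the weight $u^{i+1}$ in the $\theta_i$-slot (so that it scales the horizontal frame uniformly by $u$, hence scales \sR distance by $u$), that the time change $t\mapsto t/u$ supplies the compensating factor $1/u$ so that $\beta$ remains unit-speed, and that the translation used is the one preserving the orthonormal frame (the statement is written in the group convention of $\Je$ for which this holds). An equivalent but longer route avoids the ODE: integrate the prescription to obtain $\theta_i(t)=\theta_i(0)+\int_0^t F(x(\tau))\frac{x(\tau)^i}{i!}\,d\tau$ for $\gamma$ and the analogous formula for $\hat\gamma$, substitute $x(\tau)=x_0+u\hat x(\tau/u)$, change variables, expand $(x_0+u\hat x)^i$ by the binomial theorem, and recognize the outcome as $\delta_u\hat\gamma(t/u)$ multiplied by $(x_0,0,\dots,0)$ through the explicit Carnot group law of $\Je$; I would relegate that computation to a remark rather than the main proof.
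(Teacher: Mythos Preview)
The paper states this proposition without proof, merely referring the reader to a general reference on Carnot dilations; so there is no argument in the paper to compare against, and your write-up in fact supplies what the paper omits. Your approach --- transport the reduced solution through the affine change $h$ and the time rescaling $t\mapsto t/u$, use $(\delta_u)_*X=uX$, $(\delta_u)_*Y=uY$, push the prescription ODE forward, and invoke uniqueness --- is the natural one and is correct.

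One point deserves a slightly firmer justification than the parenthetical you give it. You assert that translation by $(x_0,0,\dots,0)$ preserves the frame $\{X,Y\}$, appealing to ``invariance of the frame'' and the group convention. Whether this is literally true depends on whether the frame in equation~\eqref{eq:frame} is left- or right-invariant for the group law being used, and on which side the $*$ in the statement acts; in some conventions right-translation by $(x_0,0,\dots,0)$ shifts $x$ but does \emph{not} fix $Y$ (since $Y$ depends on $x$). You already flag this (``the statement is written in the group convention of $\Je$ for which this holds''), but it would be cleaner either to state explicitly which invariance you are using and verify it in one line from the group law, or to bypass the issue entirely via the coordinate computation you sketch at the end: write $\theta_i(t)=\int_0^t F(x(\tau))\,x(\tau)^i/i!\,d\tau$, substitute $x(\tau)=x_0+u\hat x(\tau/u)$, expand $(x_0+u\hat x)^i$ binomially, and read off the dilation-then-translation directly. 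That route is slightly longer but is convention-free and would remove the only soft spot in the argument.
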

For more details about the Carnot dilatation see \cite[sub-Chapter 8.2]{tour}.

Proposition \ref{prp:normal-geo} and the reflection $R_{\theta_0}$ imply that it is enough to prove Theorem \ref{th:main-1} and \ref{th:main-2} for the unitary case.

\subsubsection{\SR Submersion}
Let us formalize the definition of \sR submersion and present Proposition \ref{prp:sR-sub-metric-line}.
\begin{definition}\label{def:sR-submersion}
 Let $(M,\Di_M,g_M)$ and $(N,\Di_N,g_N)$ be two \sR manifolds and let $\phi:M \to N$ a submersion ($dim(M) \geq dim(N)$). We say that $\phi$ is a  \sR submersion if $\phi_* \Di_M = \Di_N$ and $\phi^* g_N = g_M$.
\end{definition}

We remark that the projection $\pi: \Je \to \R^2$, defined in sub-Section \ref{sub-sec:Je-as-sR}, is a \sR submersion. As a consequence, a curve $\gamma(t)$ in $\Je$ and its projection $\pi(\gamma(t))$ have the same arc length.

A classic result on metric lines is the following.
\begin{proposition}\label{prp:sR-sub-metric-line}
Let $\phi:M \to N$ be a \sR submersion and let $c(t)$ be a metric line in $N$, then the horizontal lift of $c(t)$ is a metric line in $M$.  
\end{proposition}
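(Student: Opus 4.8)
The plan is to show that horizontal lifts do not increase (and hence preserve) sub-Riemannian length, and then transfer the global-minimization property from $N$ to $M$ using a lift-and-project argument. Let $c:\R \to N$ be the metric line, and let $\tilde c:\R \to M$ be a horizontal lift (which exists because $\phi_*\Di_M = \Di_N$, so any horizontal curve upstairs can be lifted, and conversely). Since $\phi^* g_N = g_M$ on $\Di_M$, for every horizontal curve $\alpha$ in $M$ the projected curve $\phi\circ\alpha$ is horizontal in $N$ and has the same length; in particular $\tilde c$ is parametrized by arc length whenever $c$ is, and the lift exists on all of $\R$.

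Next I would set up the key inequality. Fix $a<b$ in $\R$. On the one hand, restricting $\tilde c$ to $[a,b]$ gives a horizontal path in $M$ joining $\tilde c(a)$ to $\tilde c(b)$ of length $|b-a|$, so
\[
dist_M(\tilde c(a),\tilde c(b)) \le |b-a|.
\]
On the other hand, I claim $dist_M(\tilde c(a),\tilde c(b)) \ge |b-a|$. Take any horizontal path $\beta:[0,L]\to M$ from $\tilde c(a)$ to $\tilde c(b)$ parametrized by arc length, with $L = \mathrm{length}(\beta)$. Then $\phi\circ\beta$ is a horizontal path in $N$ from $c(a)$ to $c(b)$ of the same length $L$. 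Because $c$ is a metric line, $dist_N(c(a),c(b)) = |b-a|$, and hence $L = \mathrm{length}(\phi\circ\beta) \ge dist_N(c(a),c(b)) = |b-a|$. Taking the infimum over all such $\beta$ gives $dist_M(\tilde c(a),\tilde c(b)) \ge |b-a|$. Combining the two inequalities yields $dist_M(\tilde c(a),\tilde c(b)) = |b-a| = dist_{\R}(a,b)$ for all $a<b$, which is exactly the definition of a metric line; the remaining check that $\tilde c$ is a geodesic (locally minimizing) follows a fortiori from the global equality just established.

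The main obstacle is the existence and global definedness of the horizontal lift: one must ensure that a lift of $c$ starting at a chosen point of $M$ exists for all time $t\in\R$, not merely locally. This is where the submersion hypothesis does the work — $\phi$ being a (surjective) submersion with $\phi_*\Di_M=\Di_N$ means the "horizontal distribution" $\Di_M \cap (\ker d\phi)^\perp$ maps isomorphically onto $\Di_N$ fiberwise, so lifts of horizontal curves are solutions of a well-posed ODE along $c$; completeness of the fibers (or of $M$, which holds here since $\Je$ is a Carnot group and so complete) rules out escape in finite time. Once global existence of $\tilde c$ is in hand, the length-preservation and the two-sided distance estimate above are routine. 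I would remark that this is the standard argument for Riemannian submersions adapted verbatim to the sub-Riemannian setting, the only new input being that horizontality is preserved under $\phi$ and $\phi^{-1}$ in both directions.
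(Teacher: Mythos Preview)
Your argument is correct and is the standard lift-and-project proof of this classical fact. The paper itself does not prove the proposition in-text; it simply refers the reader to \cite[p.~154]{RM-ABD}, so there is no alternative approach to compare against. One small quibble: the expression ``$\Di_M \cap (\ker d\phi)^\perp$'' is not needed (and not well defined absent an ambient metric), since the hypotheses $\phi_*\Di_M=\Di_N$ and $\phi^*g_N=g_M$ already force $\phi_*$ to be a linear isometry from each $\Di_M|_p$ onto $\Di_N|_{\phi(p)}$, which is all you use.
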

The proof of Proposition \ref{prp:sR-sub-metric-line} is given in \cite[p. 154]{RM-ABD}. The following corollary is an immediate result to the Proposition \ref{prp:sR-sub-metric-line}.
\begin{corollary}\label{cor:sR-sub-metric-line}
 Geodesic lines are metric lines in $\Je$. 
\end{corollary}

\subsection{The 3-Dimensional Magnetic Space}\label{sec:mag-spa}
In \cite{RM-ABD},  we introduced the $3$-dimensional \sR manifold, denoted by $\R^{3}_{F}$ and called \enquote{magnetic \sR structure} or \enquote{magnetic space}, whose geometry depends on the choice of a polynomial $F(x)$. To endow $\R^{3}_{F}$ with the \sR structure we use global coordinates $(x,y,z)$. Then, we define the two rank non-integrable distribution $\Di_{F}$ and the \sR metric on the distribution $\Di_{F}$ by the Pfaffian equation $ dz - F(x) dy = 0$ and $ds^2_{\R^{3}_F} = ( dx^2 + dy^2)|_{\Di_F}$, respectively. We provided a \sR submersion  $\pi_{F}$ factoring the \sR submersion $\pi:\Je \to \R^{2}$, that is, $\pi = pr \circ \pi_{F}$, where the target of $\pi_{F}$ is $\R^{3}_{F}$ and the target of $pr$ is $\R^{2}$. If $F(x) = \sum_{i=0}^k a_ix^i$, then the projections $\pi_{F}$ and $pr$ are given in coordinates by
\begin{equation}\label{eq:proojection-F}
\begin{split}
\pi_{F}(x,\theta) = (x,\theta_0,\sum_{\ell=0}^{m-1} a_{\ell} \theta_\ell ) = (x,y,z), \;\; \text{and} \;\; pr(x,y,z) := (x,y) .  
\end{split}
\end{equation}
It follows that $\pi_{F}$ maps  the frame $\{X,Y \}$ defined in \eqref{eq:frame} into the frame $\{ \tilde{X},\tilde{Y}\}$, that is,
\begin{equation*}
\tilde{X} := \frac{\partial}{\partial x} = (\pi_{F})_* X \;\; \text{and}\;\; \tilde{Y} := \frac{\partial}{\partial y} + F(x)\frac{\partial}{\partial z} = (\pi_{F})_* Y .
\end{equation*} 
We conclude $\Di_{F}$ is globally framed by the \on vector fields $\{ \tilde{X},\tilde{Y} \}$.

An explanation of the names \enquote{magnetic \sR structure} or \enquote{magnetic space} is given in \cite[sub-Section 4.1]{RM-ABD}.

\subsubsection{Geodesics In The Magnetic Space}\
The Hamiltonian function governing the \sR geodesic flow in $\R^{3}_{F}$ is 
\begin{equation}\label{eq:fund-eq-F}
H_{\pi^3_F}(p_x,p_y,p_z,x,y,z) = \frac{1}{2} \sum_{i=1}^n p_{x_i}^2 + \frac{1}{2}(p_y + F(x) p_z)^2. 
\end{equation}
We say a curve $c(t) = (x(t),y(t),z(t))$  is a $\R^{3}_{F}$-geodesic parametrized by arc length in $\R^{3}_{F}$, if it is the projection of the \sR geodesic flow with the condition $H_{\pi^3_F} = \frac{1}{2}$. Since  $H_{\pi^3_F}$ does not depend on the coordinates $y$ and $z$, they are cycle coordinates, so the momentum $p_y$ and $p_z$ are constant of motion, see \cite[p. 162]{Landau} or \cite[p. 67]{Arnold} for the definition of cycle coordinates and their properties.  Since $y$ and $z$ are cycle coordinates, then the translation $\varphi_{(y_0,z_0)}(x,y,z) = (x,y+y_0,z+z_0)$ is an isometry.
\begin{definition}\label{def:s-r-distance-isome}
We denote by $dist_{\R^{3}_{F}}(\;,\;)$ and $Iso(\R^{3}_F)$, the \sR distance and the isometry group in $\R^{3}_{F}$. 
\end{definition}
For more details about the definition of \sR distance the \sR group of isometries see \cite[Chapter 1.4]{tour} or \cite[sub-Chapter 3.2]{agrachev}. Then the translation $\varphi_{(y_0,z_0)}$ is in $Iso(\R^{3}_F)$. 

Setting $p_y = a$ and $p_z = b$ inspired the following definition.
\begin{definition}\label{def:pencil}
We say that the two-dimensional linear space $Pen_{F}$ is the pencil of $F(x)$, if $Pen_{F} := \{ G(x) = a + bF(x):  (a,b) \in \R^2 \}$. 
\end{definition}
We define the lift of a curve in $\R^{3}_{F}$ to a curve in $\Je$.
\begin{definition}\label{def:mag-lift}
Let $c(t)$ be a curve in $\R^{3}_{F}$. We say that a curve $\gamma(t)$ in $\Je$ is the lift of $c(t) = (x(t),y(t),z(t))$ if $\gamma(t)$ solves
\begin{equation*}
\dot{\gamma}(t) =  \dot{x}(t) X(\gamma(t)) + G(x(t)) Y(\gamma(t)).
\end{equation*} 
\end{definition}
Now we describe the $\R^{3}_{F}$-geodesics, their lifts, and their relation with the \sR geodesics in $\Je$. 
\begin{proposition}
Let $c(t)$ be a $\R^{3}_{F}$-geodesic for $G(x)$ in $Pen_{F}$, then the component $x(t)$ satisfies the $1$-degree of freedom Hamiltonian equation
\begin{equation*}
H_{(a,b)}(p_x,x) := \frac{1}{2} p_{x}^2  + \frac{1}{2}(a+bF(x))^2 = \frac{1}{2}  p_{x}^2 + \frac{1}{2}G^2(x).
\end{equation*}
Having found a solution $(p_{x}(t),x(t))$, the coordinates $y(t)$ and $z(t)$ satisfy
\begin{equation}\label{eq:ode-y-z}
\dot{y} = G(x(t)) \;\;\text{and}\;\; \dot{z} = G(x(t)) F(x(t)).
\end{equation}
Moreover, every $\R^{3}_{F}$-geodesic is the $\pi_{F}$-projection of a geodesic in $\Je$ corresponding to $G(x)$ in $Pen_{F}$. Conversely, the lifts of  a $\R^{3}_{F}$-geodesic are precisely those geodesics corresponding to polynomials in $Pen_F$.
\end{proposition}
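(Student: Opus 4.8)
The plan is to verify the stated Hamiltonian structure by direct substitution into $H_{\pi^3_F}$, and then to establish the two-way correspondence between $\R^3_F$-geodesics and jet-space geodesics by comparing the defining ODEs. First I would restrict the full Hamiltonian \eqref{eq:fund-eq-F} to the level $p_y = a$, $p_z = b$: since $y$ and $z$ are cyclic, these momenta are conserved along the geodesic flow, so the $x$-dynamics decouples and is governed precisely by $H_{(a,b)}(p_x,x) = \frac12 p_x^2 + \frac12 (a + bF(x))^2$, which equals $\frac12 p_x^2 + \frac12 G^2(x)$ with $G(x) := a + bF(x) \in Pen_F$ by \cref{def:pencil}. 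The condition $H_{\pi^3_F} = \frac12$ then becomes $H_{(a,b)} = \frac12$, so the reduced dynamics lives in a hill interval of $G$ and is arc-length parametrized. This is the first key step and it is essentially bookkeeping with Hamilton's equations.

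Next I would recover the equations for $y(t)$ and $z(t)$. Writing out Hamilton's equations $\dot y = \partial H_{\pi^3_F}/\partial p_y$ and $\dot z = \partial H_{\pi^3_F}/\partial p_z$ for \eqref{eq:fund-eq-F} gives $\dot y = p_y + F(x) p_z = a + bF(x) = G(x(t))$ and $\dot z = F(x)\bigl(p_y + F(x) p_z\bigr) = G(x(t))F(x(t))$, which is exactly \eqref{eq:ode-y-z}. Combined with $\dot x = p_x$ from the $x$-equation, this shows that a solution $(p_x(t), x(t))$ of the reduced system, together with the quadratures \eqref{eq:ode-y-z}, reconstructs the full $\R^3_F$-geodesic $c(t) = (x(t), y(t), z(t))$.

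For the correspondence with $\Je$, I would argue in both directions. Given an $\R^3_F$-geodesic $c(t)$ arising from $G \in Pen_F$, form the curve $\gamma(t)$ in $\Je$ by the prescription in the \textbf{Background Theorem} applied to the polynomial $G$ — that is, $\dot\gamma = \dot x\, X(\gamma) + G(x(t)) Y(\gamma)$ (this is well-defined since $G$ has degree $\le k$, as $F$ does and $Pen_F$ only adds a constant). Pushing forward by $\pi_F$ and using $(\pi_F)_* X = \tilde X = \partial_x$ and $(\pi_F)_* Y = \tilde Y = \partial_y + F(x)\partial_z$ from the computation in \cref{sec:mag-spa}, one gets $\tfrac{d}{dt}\pi_F(\gamma(t)) = \dot x\, \partial_x + G(x(t))\bigl(\partial_y + F(x(t))\partial_z\bigr)$, whose components are precisely $\dot x = p_x$, $\dot y = G(x)$, $\dot z = G(x)F(x)$ — i.e. $\pi_F(\gamma(t))$ solves the same system as $c(t)$ with the same initial data, so $\pi_F(\gamma(t)) = c(t)$. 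Conversely, any geodesic $\gamma(t)$ in $\Je$ corresponds by the \textbf{Background Theorem} to some polynomial $\tilde F$ of degree $\le k$, and the same push-forward computation shows $\pi_F(\gamma(t))$ satisfies the $\R^3_F$-geodesic equations exactly when $\tilde F \in Pen_F$, since the $z$-equation forces $\dot z = \tilde F(x)\dot y$ and $\dot y = \tilde F(x)$, which is consistent with the $\R^3_F$-structure $dz = F(x)dy$ along the curve only when $\tilde F$ and $F$ agree in the relevant sense — more precisely, $\pi_F$ being a \sR submersion (\cref{def:sR-submersion}) maps horizontal curves to horizontal curves preserving length, and the lift of $c(t)$ to $\Je$ is determined up to the fiber of $\pi_F$, which is exactly parametrized by the choice of $(a,b)$, i.e. by $Pen_F$.

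The main obstacle I anticipate is the converse direction: making precise the claim that the lifts of a given $\R^3_F$-geodesic are \emph{exactly} the $\Je$-geodesics associated to polynomials in $Pen_F$, and no others. One must check that two distinct polynomials $G_1, G_2 \in Pen_F$ with the same hill interval give lifts projecting to the same $\R^3_F$-geodesic (they do, because $G_1(x) = G_2(x)$ wherever the reduced trajectory lives, as both satisfy the same energy constraint — but care is needed about multivalued branches), and conversely that a $\Je$-geodesic whose polynomial $\tilde F \notin Pen_F$ cannot project to an $\R^3_F$-geodesic, which follows because the $z$-component of its $\pi_F$-image would then satisfy $\dot z = \tilde F(x)\dot y$ with $\dot y = \tilde F(x)$ rather than the required $\dot z = F(x)\dot y$, forcing $\tilde F(x(t)) = F(x(t))$ along an interval and hence $\tilde F \equiv F$ on $hill(\tilde F)$; combined with the fiber dimension count for $\pi_F$ this pins down $Pen_F$ exactly. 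I would handle this by a dimension/uniqueness argument: the fiber of $\pi_F$ over a point is two-dimensional, matching the two-parameter family $Pen_F$, and the horizontal lift through a given point of $\Je$ is unique, so the correspondence is forced.
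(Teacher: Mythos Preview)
The paper does not supply a proof for this proposition; it simply cites \cite[sub-Section 4.1]{RM-ABD}. Your direct computation --- restricting $H_{\pi^3_F}$ to the level $p_y=a$, $p_z=b$, reading off $\dot y$ and $\dot z$ from Hamilton's equations, and pushing forward the frame via $(\pi_F)_* X = \tilde X$, $(\pi_F)_* Y = \tilde Y$ --- is correct and is almost certainly what the cited reference does; this is the standard symplectic-reduction argument.

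There is one concrete error in your converse paragraph: the fiber of $\pi_F:\Je \to \R^3_F$ has dimension $(k+2)-3 = k-1$, not $2$, so your dimension-matching argument with the two-parameter family $Pen_F$ fails as stated. The correct mechanism is simpler than you make it. Given an $\R^3_F$-geodesic $c(t)$ for $G \in Pen_F$, any lift in the sense of \cref{def:mag-lift} satisfies $\dot\gamma = \dot x\, X + G(x(t))\,Y$, which the \textbf{Background Theorem} immediately identifies as a $\Je$-geodesic for $G$; the $(k-1)$-dimensional fiber of $\pi_F$ parametrizes the choice of initial point for this lift, not the choice of polynomial. Conversely, your own push-forward computation already shows that a $\Je$-geodesic for a polynomial $\tilde F$ projects to a curve with $\dot y = \tilde F(x)$, $\dot z = \tilde F(x)F(x)$; this satisfies the $\R^3_F$-geodesic equations precisely when $\tilde F(x(t)) = a + bF(x(t))$ for constants $a,b$, and since $x(t)$ ranges over a nondegenerate interval this forces $\tilde F = a + bF$ as polynomials, i.e.\ $\tilde F \in Pen_F$. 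Your worry about two distinct $G_1, G_2 \in Pen_F$ sharing a hill interval and projecting to the same $\R^3_F$-geodesic is misplaced: distinct $G$'s give distinct functions $\dot y(t) = G(x(t))$ and hence distinct curves $c(t)$.
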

The proof was presented in \cite[sub-Section 4.1]{RM-ABD}.

The \sR geometry has two type of geodesics: normal and abnormal. The \sR geodesic flow defines the normal geodesics, while, the endpoint map determines the abnormal geodesics, the following Lemma characterizes the abnormal geodesics in  $\R^{3}_{F}$.
\begin{lemma}\label{lemma:abn-geo}
A curve $c(t)$ in $\R^{3}_{F}$ is an abnormal geodesic  if and only if $c(t)$ is  tangent to the vector field $\tilde{Y}$ and $x(t) = x^*$ is a constant point in $\R$ such that $F'(x^*) = 0$.
\end{lemma}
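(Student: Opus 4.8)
The plan is to characterize abnormal geodesics in $\R^3_F$ by working directly from the definition via the endpoint map, or equivalently through the Pontryagin Maximum Principle in the abnormal (vanishing-multiplier) case. First I would set up the distribution $\Di_F$ with its global orthonormal frame $\{\tilde X, \tilde Y\}$, where $\tilde X = \partial_x$ and $\tilde Y = \partial_y + F(x)\partial_z$, and write an arbitrary horizontal curve as $\dot c = u(t)\tilde X(c) + v(t)\tilde Y(c)$ with control $(u,v)$. The abnormal extremals are precisely the curves admitting a nonzero covector $\lambda(t) = (p_x,p_y,p_z)$ along them that annihilates $\Di_F$, i.e. $\lambda(\tilde X) = \lambda(\tilde Y) = 0$, and satisfies the Hamiltonian equations for the ``zero Hamiltonian'' $h_0 = 0$ (the pure-abnormal Pontryagin system). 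The condition $\lambda(\tilde X) = 0$ gives $p_x = 0$; the condition $\lambda(\tilde Y) = 0$ gives $p_y + F(x)p_z = 0$.

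Next I would differentiate these annihilation conditions along the flow. From $p_x \equiv 0$ we get $\dot p_x = 0$, and since $p_y, p_z$ are constants of motion (they are the momenta conjugate to the cyclic coordinates $y,z$, as noted after \eqref{eq:fund-eq-F}), differentiating $p_y + F(x(t))p_z = 0$ yields $F'(x(t))\dot x(t)\, p_z = 0$. Simultaneously, the abnormal Hamiltonian vanishing and the bracket-generating structure force a relation: computing $[\tilde X,\tilde Y] = F'(x)\partial_z$, the first Pontryagin compatibility condition $\lambda([\tilde X,\tilde Y]) = 0$ along the curve reads $F'(x(t))\, p_z = 0$. If $p_z = 0$, then $p_y = 0$ as well (from $p_y + Fp_z = 0$), so $\lambda \equiv 0$, contradicting nontriviality; hence $p_z \neq 0$ and we must have $F'(x(t)) \equiv 0$ along the curve. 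Taking one more derivative, $F''(x(t))\dot x(t) \equiv 0$, which (at points where $F'' \neq 0$) forces $\dot x \equiv 0$, so $x(t) \equiv x^*$ is constant with $F'(x^*) = 0$. One should handle the degenerate case $F'' \equiv 0$ on an interval separately but it leads to the same conclusion: $x$ is confined to the zero set of $F'$, which on any nontrivial trajectory pins $x$ to a single value by connectedness and the requirement that $F'(x(t))=0$ identically. Since $p_x = 0 = \dot x$ and the curve is horizontal, $\dot c = v(t)\tilde Y(c)$, i.e. $c$ is tangent to $\tilde Y$.

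For the converse, I would simply verify that any curve of the stated form is abnormal: if $x(t) \equiv x^*$ with $F'(x^*) = 0$ and $\dot c = v(t)\tilde Y(c)$, then choosing the constant covector $p_x = 0$, $p_z = 1$, $p_y = -F(x^*)$ gives a nonzero annihilator of $\Di_F$ along $c$ whose Hamiltonian $h_0$ vanishes identically, and the Hamiltonian equations are satisfied because $F'(x^*) = 0$ kills the only term that could move the covector. Hence $c$ is an abnormal extremal; that it is moreover a geodesic (locally length-minimizing) is standard for corank-one distributions of this type, or can be cited as already established in \cite{RM-ABD}. The main obstacle I anticipate is bookkeeping in the degenerate subcase where $F'$ vanishes on a whole interval (so the ``constancy of $x^*$'' must be argued from the topology of the admissible set rather than from $F'' \neq 0$), and being careful that the Pontryagin abnormal system genuinely forces the covector to be the one I wrote down rather than merely allowing it — but since the distribution has corank $2$ here and $[\tilde X,\tilde Y]$ spans the missing direction only where $F' \neq 0$, the annihilator line is uniquely determined wherever the curve has nonconstant $x$, which is exactly what makes the argument close.
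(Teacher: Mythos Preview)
Your argument is essentially correct and standard: annihilate the frame with a nonzero covector, use the adjoint equations (or equivalently the bracket condition $\lambda([\tilde X,\tilde Y])=0$) to force $F'(x(t))p_z\equiv 0$, rule out $p_z=0$ by nontriviality, and then use that $F$ is a polynomial so the zero set of $F'$ is discrete, pinning $x(t)$ to a single critical value $x^*$. The converse verification is fine. One slip: the distribution $\Di_F\subset T\R^3_F$ has corank $1$, not corank $2$; this does not affect the argument. Also, you do not need the $F''$ step at all --- once $F'(x(t))\equiv 0$ and $F$ is a nonconstant polynomial, continuity of $x$ and discreteness of $\{F'=0\}$ already give $x\equiv x^*$; the only genuinely degenerate case is $F$ constant, where $\Di_F$ is integrable and the statement is vacuous in the sub-Riemannian sense.

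As for comparison with the paper: the paper does \emph{not} supply its own proof of this lemma. It states the result and refers the reader to \cite[Chapter 3]{tour}, \cite[sub-sub-Chapter 4.3.2]{agrachev}, and \cite{BryantHsu} for the general theory of abnormal extremals and the endpoint map. Your write-up therefore goes beyond what the paper does, providing the explicit computation that those references would yield when specialized to $\R^3_F$.
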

For more details about the endpoint map and abnormal geodesics, see \cite[Chapter 3]{tour}, \cite[sub-sub-Chapter 4.3.2]{agrachev} or \cite{BryantHsu}.

\begin{corollary}
Let $\gamma(t)$ be a \sR geodesic in $\Je$ corresponding to the polynomial $F(x)$ and let $c(t)$ be the curve given by $\pi_{F}(\gamma(t))$, then $c(t)$ is a $\R^{3}_{F}$-geodesic corresponding to the pencil $(a,b) = (0,1)$. 
\end{corollary}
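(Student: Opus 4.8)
The plan is to unwind the definitions and match the resulting equations of motion with those of the preceding Proposition for the pencil parameter $(a,b)=(0,1)$, i.e.\ for $G(x)=F(x)$. Since $\pi_F$ is a \sR submersion and, as recorded above, pushes the global frame $\{X,Y\}$ of \eqref{eq:frame} to the global frame $\{\tilde X,\tilde Y\}$ of $\Di_F$, the image $c(t)=\pi_F(\gamma(t))$ of a horizontal curve is again horizontal and carries the same control functions as $\gamma(t)$.

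Concretely, I would start from the defining equation of the \sR geodesic $\gamma(t)$ for $(F,I)$, namely $\dot\gamma(t)=\dot x(t)\,X(\gamma(t))+F(x(t))\,Y(\gamma(t))$ with $(p_x(t),x(t))$ a solution of the reduced system \eqref{eq:fund-eq-jet-space} at energy $\tfrac12=H_F$ and $\dot x=p_x$. Applying $(\pi_F)_*$ and using $(\pi_F)_*X=\tilde X$, $(\pi_F)_*Y=\tilde Y$ gives
\[
\dot c(t)=\dot x(t)\,\tilde X(c(t))+F(x(t))\,\tilde Y(c(t)).
\]
Expanding $\tilde X=\partial_x$ and $\tilde Y=\partial_y+F(x)\partial_z$ in the coordinates $(x,y,z)$ of $\R^3_F$, the $x$-component reproduces $\dot x=p_x$, while the $y$- and $z$-components yield $\dot y=F(x(t))$ and $\dot z=F(x(t))F(x(t))$, which are precisely the ODEs \eqref{eq:ode-y-z} with $G(x)=F(x)$, that is, with $(a,b)=(0,1)$. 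On the Hamiltonian side, setting $(a,b)=(0,1)$ gives $H_{(0,1)}(p_x,x)=\tfrac12 p_x^2+\tfrac12(0+F(x))^2=\tfrac12 p_x^2+\tfrac12 F^2(x)=H_F$, so $(p_x(t),x(t))$ solves the $1$-degree-of-freedom system of the preceding Proposition and the normalization $H_F=\tfrac12$ becomes $H_{(0,1)}=\tfrac12$; equivalently, restricting $H_{\pi^3_F}$ of \eqref{eq:fund-eq-F} to the coadjoint slice $p_y=0$, $p_z=1$ produces exactly $H_F$. Hence $c(t)$ is an arc-length parametrized $\R^3_F$-geodesic for the pencil element $(a,b)=(0,1)$.

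Alternatively, one may quote the converse half of the preceding Proposition directly: $\gamma(t)$ corresponds to the polynomial $F\in Pen_F$, which is $G$ with $(a,b)=(0,1)$, hence $\gamma(t)$ is the lift (Definition \ref{def:mag-lift}) of some $\R^3_F$-geodesic, and by construction that geodesic is $\pi_F(\gamma(t))=c(t)$. There is no substantive obstacle here; the only point requiring a line of care is the identification of the reduced Hamiltonian $H_F$ with the restriction of $H_{\pi^3_F}$ to $\{p_y=0,\,p_z=1\}$, which is immediate from \eqref{eq:fund-eq-jet-space} and \eqref{eq:fund-eq-F}, together with the observation that $\pi_F$ preserves arc length because it is a \sR submersion.
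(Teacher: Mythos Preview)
Your proposal is correct and, in its alternative formulation at the end, is exactly the paper's approach: the paper's entire proof is the single line ``By construction, the pencil $(a,b)=(0,1)$ corresponds to the polynomial $F(x)$,'' which invokes the preceding Proposition just as you suggest. Your main argument simply unfolds that invocation explicitly by computing $\dot c$ and matching the ODEs \eqref{eq:ode-y-z}, so it is more detailed than the paper but not different in substance.
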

\begin{proof}
By construction, the pencil $(a,b) = (0,1)$ correspond to the polynomial $F(x)$.
\end{proof}

We classify the \sR geodesics in $\R^3_F$ according to their reduce dynamics defined by the reduced Hamiltonian $H_{(a,b)}$, equation \eqref{eq:fund-eq-F}, in the same way as we did in sub-sub-Section \ref{sec:clas-geo-je}.

\subsubsection{Cost Map In Magnetic Space}

In \cite[sub-Section 7.2]{RM-ABD}, we defined the $Cost$ map and used it to prove the main result. Here, we introduce $Cost$, an auxiliary function to show Theorems \ref{th:main-1} and \ref{th:main-2}.
\begin{definition}\label{def:cost-f-time}
\index{$\Delta y(c,[t_0,t_1])$} \index{$\Delta z(c,[t_0,t_1])$} \index{$\Delta t(c,[t_0,t_1])$} \index{$\Delta (c,[t_0,t_1])$} Let $c(t)$ be a $\R^{3}_{F}$-geodesic defined on the interval $[t_0,t_1]$. We define the function $\Delta:(c,[t_0,t_1])$ $ \to [0,\infty] \times \R^2$ given by
\begin{equation}
\begin{split}
\Delta(c,[t_0,t_1]) & := (\Delta t (c,[t_0,t_1]),\Delta y (c,[t_0,t_1]), \Delta z (c,[t_0,t_1])) \\
                    & := (t_1 - t_0,y(t_1) - y(t_0)  ,z(t_1) - z(t_0)). \\
\end{split}
\end{equation}
And the function $Cost: (c,[t_0,t_1])$ $ \to [0,\infty] \times \R$ given by 
\begin{equation}
\begin{split}
Cost(c,[t_0,t_1])&:= ( Cost_t(c,[t_0,t_1]) , Cost_y(c,[t_0,t_1]) \\       
\end{split}
\end{equation} 
where
\begin{equation*}
\begin{split}
Cost_t(c,[t_0,t_1]) = & \Delta t(c,[t_0,t_1]) - \Delta y(c,[t_0,t_1]) \\
Cost_y(c,[t_0,t_1] = &  \Delta y(c,[t_0,t_1]) - \Delta z(c,[t_0,t_1]). \\
\end{split}
\end{equation*} 
%We call $Cost(c,[t_0,t_1])$ the cost function of $c(t)$. 
\end{definition}
Let us prove that $Cost(c,[t_0,t_1])$ is well-defined:
\begin{proof}
By construction, 
$$|\Delta y(c,[t_0,t_1]) | \leq \Delta t(c,[t_0,t_1]),\;\;\; \text{so}\;\;\;   0 \leq Cost_t(c,[t_0,t_1]). $$
\end{proof}
We interpret $Cost_t(c,[t_0,t_1])$ as the cost that takes to the geodesic $c(t)$ travel through the $y$-component in the positive direction. To give more meaning to this interpretation, we present the following Lemma.
\begin{lemma}\label{lem:cost-fun-int}
Let $c(t)$ and $\tilde{c}(t)$ be two $\R^{3}_{F}$-geodesics. Let us assume  that they travel from a point $A$ to a point $B$ in a time interval $[t_0,t_1]$ and $[\tilde{t}_0,\tilde{t}_1]$, respectively. If $Cost_t(c_1,[t_0,t_1]) < Cost_t(c_2,[\tilde{t}_0,\tilde{t}_1])$, then the arc length of $c(t)$ is shorter that the arc length of $\tilde{c}(t)$.
\end{lemma}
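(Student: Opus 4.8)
The plan is to turn the inequality on $Cost_t$ into an inequality on arc length by writing both arc lengths as integrals against a common parameter and exploiting that both geodesics are parametrized by arc length. Since $c(t)$ and $\tilde c(t)$ are $\R^3_F$-geodesics parametrized by arc length, the arc length of $c$ on $[t_0,t_1]$ is exactly $\Delta t(c,[t_0,t_1]) = t_1 - t_0$, and similarly the arc length of $\tilde c$ on $[\tilde t_0,\tilde t_1]$ is $\tilde t_1 - \tilde t_0$. So the claim is simply that $Cost_t(c,[t_0,t_1]) < Cost_t(\tilde c,[\tilde t_0,\tilde t_1])$ implies $t_1 - t_0 < \tilde t_1 - \tilde t_0$.

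The key observation is that the $y$-displacement is a shared quantity. Because $c$ and $\tilde c$ both run from the fixed point $A$ to the fixed point $B$, the change in the $y$-coordinate is the same for both: $\Delta y(c,[t_0,t_1]) = y(B) - y(A) = \Delta y(\tilde c,[\tilde t_0,\tilde t_1])$. Denote this common value by $\Delta y$. Then
\begin{equation*}
Cost_t(c,[t_0,t_1]) = (t_1 - t_0) - \Delta y \quad\text{and}\quad Cost_t(\tilde c,[\tilde t_0,\tilde t_1]) = (\tilde t_1 - \tilde t_0) - \Delta y,
\end{equation*}
so subtracting the common term $\Delta y$ shows the difference of the two $Cost_t$ values equals the difference of the two time intervals, hence of the two arc lengths. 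First I would record that both curves are arc-length parametrized (so arc length $=$ elapsed time), second I would note the endpoints $A$ and $B$ force the $y$-increments to agree, and third I would combine these to get $(t_1-t_0) - (\tilde t_1 - \tilde t_0) = Cost_t(c,[t_0,t_1]) - Cost_t(\tilde c,[\tilde t_0,\tilde t_1]) < 0$, which is the assertion.

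I do not anticipate a genuine obstacle here: the statement is essentially a bookkeeping identity once one unwinds the definition of $Cost_t$ in Definition \ref{def:cost-f-time}. The only point requiring a word of care is the claim that the elapsed time equals the arc length, which uses that $\R^3_F$-geodesics are by definition projections of the \sR geodesic flow restricted to the energy level $H_{\pi^3_F} = \tfrac12$, i.e. they are unit-speed; and the claim that $\Delta y$ is determined by the endpoints alone, which is immediate since $\Delta y(c,[t_0,t_1]) = y(t_1) - y(t_0)$ depends only on $c(t_0) = A$ and $c(t_1) = B$. A minor caveat worth stating is that the lemma should be read with "from $A$ to $B$" meaning the curves share both endpoints, so that this cancellation is legitimate; with that reading the proof is the two-line computation above.
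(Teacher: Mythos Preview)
Your proof is correct and follows essentially the same approach as the paper: both use that shared endpoints force $\Delta y$ to agree, and then unwind the definition $Cost_t = \Delta t - \Delta y$ to conclude that the difference in $Cost_t$ equals the difference in $\Delta t$, which is the arc length since the geodesics are unit-speed. Your write-up is slightly more explicit about why arc length equals elapsed time, but the argument is the same.
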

\begin{proof}
We need to show that $ \Delta t(c_1,[t_0,t_1]) < \Delta t(c_2,[\tilde{t}_0,\tilde{t}_1])$. Since $A = c(t_0) = \tilde{c}(\tilde{t}_0)$ and $B = c(t_1) = \tilde{c}(\tilde{t}_1)$, it follows that 
$$\Delta y(c_1,[t_0,t_1]) = \Delta y(c_2,[\tilde{t}_0,\tilde{t}_1])$$
 which implies
\begin{equation*}
\Delta t(c_1,[t_0,t_1]) - Cost_t (c_1,[t_0,t_1]) =   \Delta t(c_2,[\tilde{t}_0,\tilde{t}_1]) - Cost_t(c_2,[\tilde{t}_0,\tilde{t}_1]),
\end{equation*}
so $0 < Cost_t(c_2,[\tilde{t}_0,\tilde{t}_1]) - Cost_t (c_1,[t_0,t_1]) =   \Delta t(c_2,[\tilde{t}_0,\tilde{t}_1]) - \Delta t(c_1,[t_0,t_1])$.
\end{proof}

\subsubsection{Periods}

The following Proposition is a classical result from classical mechanics.
\begin{proposition}\label{cor:mag-geo-period}
Let $c(t)$ be a $x$-periodic $\R^3_{F}$-geodesic for the pencil $(a,b)$ with a hill interval $I$ the period is given by
\begin{equation}\label{eq:t-period}
L(G,I) := 2 \int_{I} \frac{dx}{\sqrt{1-G^2(x)}} .
\end{equation}
Moreover, the changes $\Delta y(c,[t,t+L]) = \Delta y(G,I)$ and $\Delta z(c,[t,t+L])  = \Delta y(G,I)$  are given by 
\begin{equation}\label{eq:y-z-period}
\begin{split}
\Delta y(G,I)  & := 2 \int_{I} \frac{G(x) dx}{\sqrt{1-G^2(x)}}\;\;\text{and} \;\; \Delta z(G,I)   := 2  \int_{I} \frac{G(x)F(x) dx}{\sqrt{1-G^2(x)}}.\\
\end{split}
\end{equation}
\end{proposition}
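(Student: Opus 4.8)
The plan is to reduce the statement to the classical period formula for a one-degree-of-freedom librating orbit, applied to the $x$-component of the geodesic. By the preceding Proposition, along a $\R^{3}_{F}$-geodesic for $G \in Pen_{F}$ the pair $(p_x(t),x(t))$ solves Hamilton's equations for $H_{(a,b)}(p_x,x) = \tfrac12 p_x^2 + \tfrac12 G^2(x)$ at the energy level $\tfrac12$; hence $p_x^2 = 1 - G^2(x)$, and since $\dot x = \partial H_{(a,b)}/\partial p_x = p_x$ we get $\dot x = \pm\sqrt{1-G^2(x)}$ with the sign of $p_x$. First I would invoke the classification of reduced dynamics in Section \ref{sec:clas-geo-je}: for an $x$-periodic geodesic $I = [x_0,x_1]$ with $x_0,x_1$ regular points of $G$, so the orbit in the $(x,p_x)$-plane is a smooth closed curve on which $x$ rises monotonically from $x_0$ to $x_1$ (with $p_x>0$) and then falls monotonically back to $x_0$ (with $p_x<0$), one traversal of this curve being one period.

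Next I would compute the period by separating variables. On the rising half $dt = dx/\sqrt{1-G^2(x)}$, so the elapsed time is $\int_{x_0}^{x_1} dx/\sqrt{1-G^2(x)}$, and the falling half contributes the same amount, since the sign change of $\dot x$ is absorbed by reversing the limits of integration; this gives $L = 2\int_I dx/\sqrt{1-G^2(x)} = L(G,I)$. I would then check convergence of this improper integral: near a regular point $x_i$ of $G$ one has $1 - G^2(x) = -2G(x_i)G'(x_i)(x-x_i) + O((x-x_i)^2)$ with $G'(x_i)\neq 0$, so the integrand is $O(|x-x_i|^{-1/2})$ and integrable. This also shows that each turning point is reached in finite time, confirming that the orbit is genuinely periodic of period $L$.

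For the second part I would run the same change of variables in the defining equations $\dot y = G(x(t))$ and $\dot z = G(x(t))F(x(t))$ of \eqref{eq:ode-y-z}. Integrating over one period and splitting into the rising and falling halves, each half of the $y$-integral becomes $\int_{x_0}^{x_1} G(x)\,dx/\sqrt{1-G^2(x)}$ and each half of the $z$-integral becomes $\int_{x_0}^{x_1} G(x)F(x)\,dx/\sqrt{1-G^2(x)}$, the falling-half minus sign again being absorbed by reversing the limits; summing gives $\Delta y(c,[t,t+L]) = \Delta y(G,I)$ and $\Delta z(c,[t,t+L]) = \Delta z(G,I)$. Independence of the base time $t$ is then automatic: $x(t)$ is $L$-periodic, hence so are $G(x(t))$ and $G(x(t))F(x(t))$, and the integral of an $L$-periodic function over any interval of length $L$ does not depend on its left endpoint.

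I do not expect a real obstacle: this is the classical action/period computation. The only points needing care are the bookkeeping of the two turning points, so that the rising and falling halves add rather than cancel, and the convergence of the endpoint-singular integrals; both are handled by the hypothesis that the endpoints of the hill interval are regular points of $G$.
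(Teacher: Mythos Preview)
Your argument is correct and is precisely the classical-mechanics derivation the paper invokes (it does not reprove the proposition here but points to \cite[sub-Section 4.3]{RM-ABD} and \cite[Section 11]{Landau}): separate variables via $\dot x = \pm\sqrt{1-G^2(x)}$, sum the rising and falling halves, and feed the same substitution into \eqref{eq:ode-y-z}. Your checks on convergence at the regular endpoints and on independence of the base time $t$ are the right ones and match the standing hypothesis that the $x$-periodic case has both endpoints of $I$ regular for $G$.
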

In \cite[sub-Section 4.3]{RM-ABD}, we proved Proposition \ref{cor:mag-geo-period} using classical mechanics, see \cite[Section 11]{Landau}. In \cite[Section 2]{ABD-No-perio}, we showed a equivalent statement, in the context of $\Je$, using a generating function of the second type, see \cite[Section 50]{Arnold}.  $L(G,I)$ and $\Delta y(G,I)$  are smooth functions with respect to the parameters $(a,b)$ if and only if the corresponding geodesic $c(t)$ for $(G,I)$ is  $x$-periodic. We define an auxiliary map that will help us to prove Theorems \ref{th:main-1} and \ref{th:main-2}. 
\begin{definition}\label{def:y-z-period}
\index{$\Theta(G,I) $} The period map $\Theta:(G,I) \to [0,\infty]  \times \R$ is given by
\begin{equation*}
\begin{split}
\Theta(G,I)& := (\Theta_1(G,I),\Theta_2(G,I)) \\
           &:=  2 ( \int_{I} \sqrt{\frac{1-G(x)}{1+G(x)}} dx,  \int_{I} G(x)\frac{1-F(x)}{\sqrt{1-G^2(x)}} dx) .\\
\end{split}
\end{equation*}
\end{definition}
$\Theta_1(G,I)$ is a smooth function with respect the parameters $(a,b)$ not only when the corresponding geodesic $c(t)$ for $(G,I)$ is  $x$-periodic, $\Theta_1(G,I)$ is also smooth when $c(t)$ is a  heteroclinic of the direct-type or homoclinic geodesic such that $G(x(t))  \to 1$ when $t \to \pm \infty$.
\begin{corollary}
Let $G(x)$ be in $Pen_F$. Then:

(1) $\Theta_1(G,I) = 0$ if and only if $G(x) = 1$.

(2) If $I = [x_0,x_1]$ is compact, then  $\Theta_1(G,I)$ is finite if and only if $x_0$ and $x_1$ are not critical point of $G(x)$ with value $-1$. 
\end{corollary}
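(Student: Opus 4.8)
The plan is to argue directly from the defining integral
\[
\Theta_1(G,I) = 2\int_I \sqrt{\frac{1-G(x)}{1+G(x)}}\,dx ,
\]
whose integrand $f(x):=\sqrt{(1-G(x))/(1+G(x))}$ is non-negative, continuous wherever $G(x)>-1$, equals $0$ exactly when $G(x)=1$, and blows up as $G(x)\to-1^{+}$. The one structural input I would use is that on a hill interval $I$ (Definition~\ref{def:hill-int}) one has $|G|\le 1$ with $|G|<1$ on the interior; hence $f$ is finite and continuous on the interior of $I$ and can be singular only at the (at most two) boundary points of $I$, and only at those where $G=-1$.

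For (1), the direction $G\equiv 1\Rightarrow\Theta_1=0$ is immediate, since then $f\equiv 0$. For the converse I would use the contrapositive. If $G\not\equiv 1$ and $I$ is a nondegenerate hill interval, then $G-1$ is a nonzero polynomial, so $\{x\in I:G(x)=1\}$ is finite; thus $f>0$ on $I$ off a finite set and $\Theta_1=2\int_I f>0$, where the value $+\infty$ is allowed and is still $\ne 0$. The remaining cases are the degenerate ones where $G$ is a constant polynomial: if $|G|<1$ then $I=\R$ and $f$ is a positive constant, so $\Theta_1=+\infty$; the constant $G\equiv -1$ has a one-point hill interval, on which we read $\Theta_1=+\infty$ off the limiting behaviour of $f$; and $G\equiv 1$ is exactly the excluded case. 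So $\Theta_1\ne 0$ in all these cases, which proves (1).

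For (2), take $I=[x_0,x_1]$ compact and analyse the integrand near an endpoint, say $x_1$; the endpoint $x_0$ is handled identically, and on any compact subinterval of the interior $f$ is bounded, so $\Theta_1$ is finite exactly when each endpoint contributes a finite amount. If $G(x_1)=1$, then $1-G(x)\to 0$ and $1+G(x)\to 2$, so $f$ extends continuously by $0$ at $x_1$ and there is no contribution, whether or not $x_1$ is critical. If $G(x_1)=-1$, I would Taylor-expand $G$ at $x_1$: when $x_1$ is a regular point, $G(x)+1\sim c\,(x_1-x)$ with $c>0$ as $x\to x_1^{-}$ (the sign being forced by $G>-1$ just inside $I$), hence $f(x)\sim\text{const}\cdot(x_1-x)^{-1/2}$, an integrable singularity; when $x_1$ is a critical point, $G'(x_1)=0$ forces the order of vanishing $m$ of $G+1$ at $x_1$ to be at least $2$, so $G(x)+1\sim C\,(x_1-x)^{m}$ with $C>0$ and $f(x)\sim\text{const}\cdot(x_1-x)^{-m/2}$ with $m/2\ge 1$, a non-integrable singularity, whence $\Theta_1=+\infty$. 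Combining the two endpoints gives: $\Theta_1(G,I)$ is finite precisely when neither $x_0$ nor $x_1$ is a critical point of $G$ with value $-1$, which is (2).

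I expect the only genuine work to be the endpoint asymptotics in (2): one must check that near a boundary point where $G=-1$ the leading exponent of $1+G$ is $1$ in the regular case and $\ge 2$ in the critical case, and that these sit on opposite sides of the integrability threshold $p=1$ for $\int(x_1-x)^{-p}\,dx$. The accompanying sign bookkeeping — verifying that $G+1>0$ just inside $I$ is consistent with the parity of $m$ and the sign of the relevant derivative of $G$ at $x_1$ — is the fussy part, but it is a finite local computation. A minor secondary point is to dispose cleanly of the degenerate constant-polynomial hill intervals in (1), which I would handle via the convention that $\Theta_1(\pm 1,\{\mathrm{pt}\})$ is read as the boundary value of $f$, i.e.\ $0$ for $+1$ and $+\infty$ for $-1$.
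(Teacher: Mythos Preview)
The paper states this corollary without proof, so there is no argument to compare against; your approach is the natural one and is correct in substance.

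Two minor remarks. In part~(1) you can simplify: on a nondegenerate hill interval the definition already forces $|G|<1$ on the interior, so the integrand is strictly positive everywhere on $\operatorname{int}(I)$ and there is no need to appeal to the finiteness of the zero set of $G-1$. For the degenerate constant case $G\equiv -1$, the hill interval is a single point and the Lebesgue integral over a point is literally $0$, not $+\infty$; your reading of $\Theta_1$ as $+\infty$ there is a convention rather than a computation, and the paper does not resolve this edge case either. Since the corollary is used in the paper only for nondegenerate hill intervals, it is cleanest simply to exclude the one-point case from the statement rather than to legislate a value.

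Your endpoint asymptotics in part~(2) are exactly right: at a boundary point with $G=-1$, the order of vanishing of $1+G$ is $1$ when the point is regular and at least $2$ when it is critical, placing the singularity $(x_1-x)^{-m/2}$ on the integrable and non-integrable sides of the threshold $m/2=1$ respectively; and at a boundary point with $G=1$ the integrand extends continuously by $0$. That is all the statement requires.
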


We introduce an important concept called the travel interval.
\begin{definition}\label{def:travel-I}
Let $c(t)$ be a $\R^3_{F}$-geodesic traveling during  the time interval $[t_0,t_1]$. We say that    $\mathcal{I}[t_0,t_1] := x([t_0,t_1])$ is the travel interval of  $c(t)$, counting multiplicity.    
\end{definition}
For instance, if $c(t)$ is a $\R^3_{F}$-geodesic with hill interval $I$ such that its coordinate $x(t)$ is $L$-periodic, then $\mathcal{I}[t,t+L] = 2I$.

\begin{corollary}\label{prop:mag-geo-Delta}
Let $c(t)$ be a $\R^{3}_{F}$-geodesic for $G(x)$ in $Pen_F$ with travel interval $\mathcal{I}$. Then  $\Delta (c,[t_0,t_1])$ from Definition \ref{def:cost-f-time} can be rewritten in terms of polynomial $G(x)$ and the travel interval $\mathcal{I}$ as follows;
\begin{equation*}
\begin{split}
\Delta (c,[t_0,t_1]) & =  \Delta  (G,\mathcal{I}) \\
&:=  (\int_{\mathcal{I}} \frac{dx}{\sqrt{1-G^2(x)}},\int_{\mathcal{I}} \frac{G(x)dx}{\sqrt{1-G^2(x)}},\int_{\mathcal{I}} \frac{ G(x)F(x)dx}{\sqrt{1-G^2(x)}}) .\\
\end{split}
\end{equation*}
In the same way, the map $Cost(c,[t_0,t_1])$ from Definition \ref{def:cost-f-time} can be rewritten as follows:
\begin{equation*}
\begin{split}
Cost(c,[t_0,t_1]) = Cost(G,\mathcal{I}) :=  (\int_{\mathcal{I}} \frac{1-G(x)}{\sqrt{1-G^2(x)}}dx,  \int_{\mathcal{I}} \frac{(1-F(x))G(x)}{\sqrt{1-G^2(x)}}dx)  \\        
\end{split} 
\end{equation*}
\end{corollary}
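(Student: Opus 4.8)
The plan is to reduce every component of $\Delta$ and $Cost$ to a change of variables from the time parameter $t$ to the spatial coordinate $x$ along the geodesic $c(t)=(x(t),y(t),z(t))$. By the preceding Proposition, the $x$-component obeys $\frac12\dot{x}^2+\frac12 G^2(x)=\frac12$, so $\dot{x}^2=1-G^2(x)$, while $\dot{y}=G(x(t))$ and $\dot{z}=G(x(t))F(x(t))$ by \eqref{eq:ode-y-z}. First I would write $\Delta t$, $\Delta y$, $\Delta z$ as the integrals over $[t_0,t_1]$ of $1$, $G(x(t))$, and $G(x(t))F(x(t))$ respectively, and then, on any subinterval where $\dot{x}$ keeps a constant sign, substitute $dt=dx/\dot{x}=\pm dx/\sqrt{1-G^2(x)}$ with the orientation of $dx$ matched to the sign of $\dot{x}$. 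This turns the three integrands into $\frac{dx}{\sqrt{1-G^2(x)}}$, $\frac{G(x)\,dx}{\sqrt{1-G^2(x)}}$, and $\frac{G(x)F(x)\,dx}{\sqrt{1-G^2(x)}}$.

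The step that needs care is that $x(t)$ need not be monotone on $[t_0,t_1]$: an $x$-periodic geodesic turns around at the endpoints of its hill interval, where $\dot{x}=0$. I would partition $[t_0,t_1]$ at the (finitely many) turning times into intervals on each of which $x(t)$ is a strictly monotone diffeomorphism onto a subinterval of the hill interval, apply the substitution above on each piece, and note that the sign flip of $\dot{x}$ is cancelled by the reversal of the orientation of $dx$, so every piece contributes with a $+$ sign. Summing the pieces gives precisely the integral over the travel interval $\mathcal{I}=x([t_0,t_1])$ counted with multiplicity of Definition \ref{def:travel-I}; when $c$ is $L$-periodic and $[t_0,t_1]=[t,t+L]$ this is exactly the period computation of Proposition \ref{cor:mag-geo-period}, so I would phrase the argument to reuse that bookkeeping.

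The only genuine obstacle is the behavior at the turning points, where $1-G^2(x)$ vanishes and the integrands are singular, and at the asymptotic endpoints in the heteroclinic or homoclinic cases. At a regular endpoint of the hill interval the zero of $1-G^2$ is simple, the singularity is an integrable $1/\sqrt{\cdot}$, and all three integrals converge; at a critical endpoint with value $\pm1$ the zero is of higher order and $\Delta t$ (and possibly $\Delta y,\Delta z$) may be $+\infty$, which is consistent with the target $[0,\infty]\times\R^2$ and with the finiteness criteria already recorded for $\Theta_1$. Since $|G(x)|\le 1$ on the hill interval, the integrands for $\Delta y$ and $\Delta z$ are dominated by that of $\Delta t$, so no new divergence appears, and in the $Cost$ integrands the factors $1-G(x)$ and $1-F(x)$ only help convergence.

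Finally the $Cost$ formulas are pure algebra from Definition \ref{def:cost-f-time}: $Cost_t=\Delta t-\Delta y=\int_{\mathcal{I}}\frac{1-G(x)}{\sqrt{1-G^2(x)}}\,dx$ and $Cost_y=\Delta y-\Delta z=\int_{\mathcal{I}}\frac{(1-F(x))G(x)}{\sqrt{1-G^2(x)}}\,dx$, which are the claimed expressions, so nothing further is needed.
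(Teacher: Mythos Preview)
Your proposal is correct and is exactly the approach the paper takes: the paper's entire proof is the sentence ``The proof is the same as that for Proposition \ref{cor:mag-geo-period},'' i.e.\ the classical-mechanics change of variables $dt=\pm\,dx/\sqrt{1-G^2(x)}$ on monotone arcs, summed over the travel interval with multiplicity. You have in fact written out more detail than the paper does, including the handling of turning points and the algebraic derivation of the $Cost$ integrals.
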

The proof is the same as that for Proposition \ref{cor:mag-geo-period}. We remark that $Cost(G,\mathcal{I})$ and $\Theta(G,I)$ are equal if and only if $I = \mathcal{I}$.

\begin{corollary} Let $c(t)$ be a $\R^3_{F}$-geodesic, then
$$\lim_{n \to \infty } Cost_{t}(c,[-n,n])\;\; \text{ is finite if and only if} \;\; \lim_{t\to \pm\infty} G(x(t)) = 1.$$ 
\end{corollary}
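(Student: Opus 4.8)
The plan is to rewrite the tail cost as a single monotone integral and then read both sides of the equivalence off the classification of $\R^3_F$-geodesics; let $G\in Pen_F$ be the polynomial determining $c$. Using $\dot y=G(x(t))$ from \eqref{eq:ode-y-z} and $\Delta t(c,[-n,n])=2n$, Definition \ref{def:cost-f-time} gives
\[
Cost_t(c,[-n,n])=\int_{-n}^{n}\bigl(1-G(x(t))\bigr)\,dt,
\]
and the energy relation \eqref{eq:fund-eq-F} forces $G(x(t))^2\le 1$ along the geodesic, so the integrand is nonnegative. Hence $n\mapsto Cost_t(c,[-n,n])$ is nondecreasing with limit $\int_{\R}(1-G(x(t)))\,dt\in[0,\infty]$. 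On each maximal interval of strict monotonicity of $x(t)$ one has $|\dot x|=\sqrt{1-G^2(x)}$, so the substitution $t\leftrightarrow x$ turns these pieces into integrals of $\sqrt{\frac{1-G(x)}{1+G(x)}}$ over the travel interval $\mathcal{I}$ of Definition \ref{def:travel-I}, consistently with the formula for $Cost_t$ in Corollary \ref{prop:mag-geo-Delta}.

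The one analytic ingredient I would isolate as a lemma is a local dichotomy at the endpoints of a hill interval: if $x^*$ is an endpoint of a hill interval of $G$, so $|G(x^*)|=1$, then the integral of $\sqrt{\frac{1-G}{1+G}}$ over a one-sided neighborhood of $x^*$ inside $hill(G)$ is finite if and only if $G(x^*)=1$ or $x^*$ is a regular point of $G$. This is a Taylor-expansion computation: one of $1-G$, $1+G$ vanishes at $x^*$ to some order $j$ — with $j=1$ when $G'(x^*)\ne 0$ and $j\ge 2$ when $x^*$ is critical — while the other tends to $2$; hence the integrand is comparable to $|x-x^*|^{j/2}$ when $G(x^*)=1$, which is integrable for every $j\ge 1$, and to $|x-x^*|^{-j/2}$ when $G(x^*)=-1$, which is integrable only when $j=1$.

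I would then run through the cases of sub-Section \ref{sec:clas-geo-je} applied to $\R^3_F$. If $c$ is a line geodesic with $x(t)\equiv x^*$ then $G(x^*)=\pm 1$ and $Cost_t(c,[-n,n])=2n\bigl(1-G(x^*)\bigr)$, which is $0$ when $G(x^*)=1$ and tends to $\infty$ when $G(x^*)=-1$; since $G(x(t))\equiv G(x^*)$ the equivalence is immediate in both subcases. If $x(t)$ is nonconstant but $G\equiv c$ with $|c|<1$, then $Cost_t(c,[-n,n])=2n(1-c)\to\infty$ while $G(x(t))\equiv c\ne 1$. If $c$ is $x$-periodic, the integral over one period equals the fixed positive number $\Theta_1(G,I)$ — finite by the lemma, since both endpoints of the hill interval $I$ are regular here, and positive because $1-G>0$ on the interior of $I$ — so $Cost_t(c,[-n,n])\to\infty$, and simultaneously $G(x(t))$ is a nonconstant periodic function with no limit as $t\to\pm\infty$; both sides fail.

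Finally, for a homoclinic or heteroclinic geodesic $x(t)$ tends as $t\to\pm\infty$ to the critical endpoint(s) of the hill interval $I=[x_0,x_1]$ — both endpoints in the heteroclinic case, exactly one in the homoclinic case — so $\lim_{t\to\pm\infty}G(x(t))=1$ holds precisely when $G$ equals $1$ at each such critical endpoint. On the other hand, the monotonicity substitution of the first step gives $\int_{\R}(1-G(x(t)))\,dt$ equal to $\int_{x_0}^{x_1}\sqrt{\frac{1-G}{1+G}}\,dx$ for a heteroclinic orbit and to twice that for a homoclinic orbit, and by the lemma this is finite if and only if $G$ equals $1$ at every critical endpoint of $I$ (the regular endpoint, present only in the homoclinic case, never obstructs finiteness). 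Since the two conditions coincide, the equivalence follows. The point I expect to require the most care is the lemma in the degenerate case $j>2$, together with the slightly counterintuitive fact that a regular turning point with value $-1$, where the integrand blows up like $|x-x^*|^{-1/2}$, is harmless; the line and $x$-periodic cases are routine bookkeeping.
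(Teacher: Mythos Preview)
Your argument is correct. The paper actually states this corollary without proof, leaving it as an immediate consequence of the integral formula in Corollary~\ref{prop:mag-geo-Delta}; you have supplied the details the paper omits. Your approach---rewriting $Cost_t(c,[-n,n])$ as the monotone integral $\int_{-n}^n(1-G(x(t)))\,dt$, converting to the $x$-variable, and then invoking the endpoint dichotomy lemma across the geodesic classification---is exactly the computation the paper's placement of the statement suggests, just made explicit. The one place worth a second look is the line-geodesic subcase with nonconstant $G$ and $x(t)\equiv x^*$: you correctly note $G(x^*)=\pm 1$, but it may be worth recording that the equilibrium condition also forces $G'(x^*)=0$, so this really is the ``trivial solution'' case of the classification and not something extra.
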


\subsubsection{Upper Bound of the Cut Time}
Here we introduce the cut time definition in the context of $\Je$.
\begin{definition}
Let $\gamma: \R \to \Je$ be a \sR geodesic parameterized by arc length.    The  cut time of  $\gamma$ is
$$ t_{cut} (\gamma)  := \sup \{ t>0 : \; \gamma|_{[0,t]} \;\; \text{is length-minimizing}  \}.$$
\end{definition}

%\begin{figure}%
%    {{\includegraphics[width=2.5cm]{images/x-y-martienet.jpg} }}%
%    \qquad
%    {{\includegraphics[width=2.5cm]{images/x-z-martienet.jpg} }}%
%     \qquad
%     {{\includegraphics[width=2.5cm]{images/x-y-non-inf-martienet.jpg} }}%
%    \qquad
%    {{\includegraphics[width=2.5cm]{images/x-z-non-inf-martienet.jpg} }}%    
%%    \qquad
%%    {{\includegraphics[width=2.5cm]{x-y-x4-2x.jpg} }}%
%%    \qquad
%%    {{\includegraphics[width=2.5cm]{x-z-x4-2x.jpg} }}    
%    \caption{Typical $x$-periodic geodesics indicating Maxwell points. The first two images present two half-period curves share endpoints and the last two images present two complete-period curves share endpoints in the planes $(x,y)$ and $(x,z)$, respectively. } 
%    \label{Equi-optimal} 
%\end{figure}

\begin{proposition}\label{Maxwell}
Let $c(t)$ be a $x$-periodic geodesic on  $R^3 _F$ for the pair $(G,I)$ and with $x$-period $L$. 
Then 

1.- $t_{cut}(c) \le  \frac{L(G,I)}{2}$ if  $F$ is even and $c$'s Hill interval   contains $0$.  

2.- $t_{cut}(c) \le L(G,I)$ in all cases. 

\end{proposition}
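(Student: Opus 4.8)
The plan is the standard one: $t_{cut}(c)$ is bounded above by the first time the exponential map based at $c(0)$ develops a coincidence — either a Maxwell point (two distinct arc-length parametrized minimizing $\R^{3}_{F}$-geodesics with the same endpoints, hence the same length) or a conjugate point. In each of the two cases I would produce such a coincidence at, or before, the claimed time.

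For part 1, after a time translation assume $x(0)=0$, and after translating by some $\varphi_{(y_{0},z_{0})}\in Iso(\R^{3}_{F})$ assume also $c(0)=(0,0,0)$. Since $F$ is even, $G:=a+bF$ is even, and $b\neq 0$ because an $x$-periodic geodesic is not a line, so $G'(0)=0$; as the endpoints of the hill interval of an $x$-periodic geodesic are regular points of $G$, the point $0$ must lie in the interior of $I$, whence $p_{x}(0)=\pm\sqrt{1-G(0)^{2}}\neq 0$, and moreover $I=[-x_{1},x_{1}]$ is symmetric. The reduced potential $\tfrac12 G^{2}$ being even, the libration through $0$ satisfies $x(t+L/2)=-x(t)$, so $x(L/2)=0$ and $c(L/2)=(0,y(L/2),z(L/2))$. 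The map $\sigma_{x}(x,y,z):=(-x,y,z)$ preserves the Pfaffian form $dz-F(x)\,dy$ — this is exactly where evenness of $F$ enters — and the metric $dx^{2}+dy^{2}$, so $\sigma_{x}\in Iso(\R^{3}_{F})$. Hence $\sigma_{x}\circ c$ is an $\R^{3}_{F}$-geodesic with $(\sigma_{x}\circ c)(0)=c(0)$ and $(\sigma_{x}\circ c)(L/2)=(-x(L/2),y(L/2),z(L/2))=c(L/2)$, and it is distinct from $c$ because $(\sigma_{x})_{*}\dot c(0)=-p_{x}(0)\tilde X+G(0)\tilde Y\neq p_{x}(0)\tilde X+G(0)\tilde Y=\dot c(0)$. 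So $c$ and $\sigma_{x}\circ c$ are two distinct arc-length geodesics of length $L/2$ joining $c(0)$ to $c(L/2)$, whence $t_{cut}(c)\le L/2=L(G,I)/2$.

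Part 2 is the real difficulty, and I expect it to be the main obstacle. When $F$ is not even the reflection $\sigma_{x}$ is unavailable, and the other obvious isometries of $\R^{3}_{F}$ — the translations $\varphi_{(y_{0},z_{0})}$ and $\sigma(x,y,z)=(x,-y,-z)$, combined with time reversal — all \emph{fix} an $x$-periodic geodesic started at a turning point (after centering one checks $c(-t)=\sigma(c(t))$, and analogous identities at $t=L,2L,\dots$), so none of them produces a competing geodesic. Two routes then seem viable. (i) Produce a conjugate point at some $t^{*}\le L$: starting $c$ at a turning point $x_{0}$, consider the two-parameter family of $\R^{3}_{F}$-geodesics issuing from $c(0)=(x_{0},0,0)$ with momenta $(p_{y},p_{z})$ near $(a,b)$ (with $p_{y}$ adjusted so that $H=\tfrac12$ at $x_{0}$), whose derivatives in the parameters are Jacobi fields along $c$ vanishing at $t=0$; since the $x$-period $2\int_{I'}dx/\sqrt{1-G'^{2}}$ depends non-trivially on the parameters — equivalently the monodromy of the periodic reduced Hamiltonian is a nontrivial shear — the differential of the exponential map $(p_{y},p_{z},t)\mapsto c_{(p_{y},p_{z})}(t)$ must drop rank within one full $x$-period, forcing a conjugate point at $t^{*}\le L$. (ii) Reduce to $\Je$: by Definition \ref{def:mag-lift} and the Background Theorem the horizontal lift of $c$ is the $\Je$-geodesic $\gamma$ attached to $G\in Pen_{F}$, which is $x$-periodic with the same $x$-period $L$; since $\pi_{F}:\Je\to\R^{3}_{F}$ is a \sR submersion it preserves the length of horizontal curves and does not increase distance, so $c|_{[0,T]}$ minimizing implies $\gamma|_{[0,T]}$ minimizing (its length $T$ equals $dist_{\R^{3}_{F}}(c(0),c(T))\le dist_{\Je}(\gamma(0),\gamma(T))\le T$), giving $t_{cut}(c)\le t_{cut}(\gamma)$, after which one invokes the cut-time bound for $x$-periodic geodesics of $\Je$ established in \cite{RM-ABD}. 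In either route the essential content is the degeneracy forced by the periodicity of the reduced $x$-dynamics; the submersion step in (ii) is merely bookkeeping.
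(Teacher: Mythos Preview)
The paper does not give its own proof here --- it simply cites \cite[sub-Section 6.2]{RM-ABD} --- so there is no in-paper argument to compare against and your proposal has to stand on its own. Your treatment of Part~1 is the standard Maxwell construction and is correct: when $F$ is even the reflection $\sigma_{x}(x,y,z)=(-x,y,z)$ is a genuine isometry of $\R^{3}_{F}$, and for a start at $x(0)=0$ it produces a distinct competing geodesic that rejoins $c$ at $t=L/2$.

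The genuine gap is in Part~2. You dismiss $\sigma(x,y,z)=(x,-y,-z)$ combined with time reversal because, at a turning-point start, it fixes $c$. But there is no reason to start at a turning point. Take any interior start $c(0)=(x^{*},0,0)$ with $p_{x}(0)\neq 0$ and set $\tilde c(t):=\sigma(c(-t))$. Then $\tilde c(0)=c(0)$, while $\dot{\tilde c}(0)=-p_{x}(0)\,\tilde X+G(x^{*})\,\tilde Y\neq\dot c(0)$, so $\tilde c\neq c$. Using the $L$-periodicity of the $x$-component one has $c(-L)=(x^{*},-\Delta y,-\Delta z)$, hence $\tilde c(L)=\sigma(x^{*},-\Delta y,-\Delta z)=(x^{*},\Delta y,\Delta z)=c(L)$: a Maxwell point at $t=L$. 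This already gives $t_{cut}(c)\le L$ for every non-turning start, and since any minimizing arc of length strictly greater than $L$ contains a subarc of length slightly greater than $L$ beginning at a non-turning time, the bound follows for all starts. Your route (i) is thus only needed at the isolated turning-point starts (where the Maxwell pair degenerates to a conjugate point), and your route (ii), while logically valid, just re-invokes the same reference the paper already cites.
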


The prove of this Proposition is in \cite[sub-Section 6.2]{RM-ABD}.

\subsubsection{Sequence Of Geodesics On The Magnetic Space }\label{sub-sec:sequence-R3-F}

Let us present two classical results on metric spaces.
\begin{lemma}\label{lem:seq-mim-geo}
Let $c_n(t)$ be a sequence of minimizing geodesics on the compact interval $\mathcal{T}$ converging uniformly to a geodesic $c(t)$, then $c(t)$ is minimizing in the interval $\mathcal{T}$.   
\end{lemma}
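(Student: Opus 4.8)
The statement is the classical fact that a uniform limit of minimizing geodesics is minimizing. The plan is to argue by contradiction, exploiting the continuity of the sub-Riemannian distance function together with the fact that length (equivalently, the arc-length parametrization keeps the time-interval length equal to the curve length) is preserved under uniform limits of arc-length-parametrized horizontal curves. Write $\mathcal{T} = [\alpha,\beta]$, and let $a,b \in \mathcal{T}$ with $a<b$; we must show $\mathrm{dist}(c(a),c(b)) = b-a$.

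First I would record the two ingredients. (i) Each $c_n$ is a minimizing geodesic parametrized by arc length on $\mathcal{T}$, so $\mathrm{dist}(c_n(a),c_n(b)) = b-a$ for all $n$ and all $a<b$ in $\mathcal{T}$. (ii) The sub-Riemannian distance $\mathrm{dist}(\cdot,\cdot)$ is continuous on $M \times M$ (it induces the manifold topology), and $c_n \to c$ uniformly on $\mathcal{T}$, hence pointwise; therefore $\mathrm{dist}(c_n(a),c_n(b)) \to \mathrm{dist}(c(a),c(b))$. Combining (i) and (ii) gives $\mathrm{dist}(c(a),c(b)) = b-a$ immediately, \emph{provided} we already know $c$ is an arc-length-parametrized horizontal curve on $\mathcal{T}$ — which is part of the hypothesis that $c$ is a geodesic. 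Since $b-a$ is the length of $c|_{[a,b]}$ and no horizontal path between the endpoints can be shorter than $\mathrm{dist}(c(a),c(b))$, while we have just shown the two are equal, $c|_{[a,b]}$ is length-minimizing. As $a<b$ in $\mathcal{T}$ were arbitrary, $c$ is minimizing on $\mathcal{T}$.

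Alternatively, and perhaps cleaner to present, I would run a direct contradiction argument without invoking continuity of $\mathrm{dist}$ as a black box: if $c|_{\mathcal{T}}$ were not minimizing, there would be a horizontal curve $\sigma$ from $c(\alpha)$ to $c(\beta)$ with $\mathrm{length}(\sigma) \le (\beta-\alpha) - \varepsilon$ for some $\varepsilon>0$. Using uniform convergence, pick $n$ large enough that $\mathrm{dist}(c_n(\alpha),c(\alpha))$ and $\mathrm{dist}(c_n(\beta),c(\beta))$ are each less than $\varepsilon/3$; concatenating short connecting paths with $\sigma$ produces a horizontal curve from $c_n(\alpha)$ to $c_n(\beta)$ of length at most $(\beta-\alpha) - \varepsilon/3 < \beta-\alpha = \mathrm{length}(c_n|_{\mathcal{T}})$, contradicting that $c_n$ is minimizing on $\mathcal{T}$.

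\textbf{Main obstacle.} There is no deep obstacle; the only point requiring care is the interplay between the parametrization and the length. One must use that the $c_n$ are parametrized by arc length (so that ``minimizing on $\mathcal{T}$'' literally means $\mathrm{length}(c_n|_{[a,b]}) = b-a = \mathrm{dist}(c_n(a),c_n(b))$) and that this passes to the limit; the subtlety is that uniform convergence of curves does not in general preserve length (length is only lower semicontinuous), but here we do not need length convergence of $c_n$ — we only need the fixed value $b-a$ on the left and the continuity/limit of $\mathrm{dist}$ on the right, and the hypothesis that the limit $c$ is itself a (horizontal, arc-length) geodesic. So the argument is essentially a two-line application of continuity of $\mathrm{dist}$ once the hypotheses are unpacked correctly.
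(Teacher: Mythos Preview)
Your proposal is correct and follows exactly the paper's approach: for any $[t_0,t_1]\subset\mathcal{T}$ one has $\mathrm{dist}(c_n(t_0),c_n(t_1))=|t_1-t_0|$ by minimality, and uniform convergence plus continuity of the distance gives $\mathrm{dist}(c(t_0),c(t_1))=|t_1-t_0|$. Your write-up is in fact more careful than the paper's two-line version, and the alternative contradiction argument you sketch is unnecessary but perfectly fine.
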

\begin{proof}
Let $[t_0,t_1] \subset \mathcal{T}$, since $c_n(t)$ is sequence of minimizing geodesic then $dist_{\R^{3}_{F}}(c_{n}(t_0),c_n(t_1)) = |t_1 - t_0|$ for all $n$. By the uniformly convergence, if $n \to \infty$ then $dist_{\R^{3}_{F}}(c(t_0),c(t_1)) = |t_1 - t_0|$. 
\end{proof}

\begin{proposition}\label{prp:seque-comp}
\index{$\mathcal{T}$}  \index{$K$} Let $K$ be a compact subset of $\R^{3}_{F}$ and let  $\mathcal{T}$ be a compact time interval. Let us define the following set of $\R^{3}_{F}$-geodesics
\begin{equation*}
\begin{split}  Min(K,\mathcal{T}) := \big\{ & \R^{3}_{F}\text{-geodesics}\; c(t) :   c(\mathcal{T}) \subset K  \text{ and}\;c(t) \text{ is minimizing in} \; \mathcal{T} \big\}. 
\end{split}
\end{equation*}
\index{$Min(K,\mathcal{T})$} Then  $Min(K,\mathcal{T})$ is a sequentially compact set with respect to the uniform
 topology. 
\end{proposition}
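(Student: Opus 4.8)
The plan is to establish the two ingredients of an Arzelà–Ascoli argument — equicontinuity and pointwise precompactness — and then upgrade the resulting uniformly convergent subsequence to a limit that is again a $\R^3_F$-geodesic minimizing on $\mathcal{T}$. First I would record that every $c \in Min(K,\mathcal{T})$ is parametrized by arc length, hence $1$-Lipschitz as a map into $(\R^3_F, dist_{\R^3_F})$; since all the curves take values in the fixed compact set $K$, the family is uniformly bounded and uniformly equicontinuous. Arzelà–Ascoli then yields, from any sequence $c_n \in Min(K,\mathcal{T})$, a subsequence $c_{n_j}$ converging uniformly on $\mathcal{T}$ to some $1$-Lipschitz curve $c_\infty : \mathcal{T} \to K$.

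The substantive step is to show $c_\infty$ is itself a $\R^3_F$-geodesic. Here I would invoke \Cref{lem:seq-mim-geo}: a uniform limit of minimizing geodesics on a compact interval is minimizing, hence (being minimizing and, by the Lipschitz bound, unit speed) it is a $\R^3_F$-geodesic parametrized by arc length. To apply that lemma cleanly one needs to know the limit curve is genuinely a geodesic and not merely a $1$-Lipschitz path; the standard way is to note that on each compact subinterval $[t_0,t_1]\subset\mathcal{T}$ we have $dist_{\R^3_F}(c_{n_j}(t_0),c_{n_j}(t_1)) = t_1 - t_0$ for all $j$, and $dist_{\R^3_F}$ is continuous, so passing to the limit gives $dist_{\R^3_F}(c_\infty(t_0),c_\infty(t_1)) = t_1-t_0$; thus $c_\infty$ realizes the distance between any two of its points on $\mathcal{T}$, which forces it to be a minimizing geodesic there. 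Finally $c_\infty(\mathcal{T}) \subset K$ because $K$ is closed and each $c_{n_j}(\mathcal{T})\subset K$, so $c_\infty \in Min(K,\mathcal{T})$, proving sequential compactness.

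The main obstacle I anticipate is the completeness/properness input needed to make the limit a \emph{geodesic} in the strong sense used elsewhere in the paper — i.e. that minimizers between nearby points exist and vary continuously, so that a unit-speed distance-realizing curve is automatically a smooth normal (or abnormal) geodesic governed by the Hamiltonian \eqref{eq:fund-eq-F}. In the setting of $\R^3_F$ this follows because $\R^3_F$ with its global orthonormal frame $\{\tilde X,\tilde Y\}$ is a complete \sR manifold (it is a trivial bundle over $\R^2$ with a complete base metric, and the vertical direction is controlled), so the \sR Hopf–Rinow theorem applies; I would cite this completeness explicitly and then the identification of distance-realizing unit-speed curves with arc-length geodesics is routine. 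A secondary, purely bookkeeping point is to make sure the convergence is genuinely uniform on all of $\mathcal{T}$ (not just pointwise), which Arzelà–Ascoli delivers from equicontinuity, and that the three coordinate functions $x(t),y(t),z(t)$ of $c_{n_j}$ each converge — immediate since convergence in $\R^3_F$ in the \sR topology implies convergence in the underlying $\R^3$ topology.
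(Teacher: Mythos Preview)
Your approach is essentially the same as the paper's: establish uniform boundedness and equicontinuity, apply Arzel\`a--Ascoli to extract a uniformly convergent subsequence, and then invoke \Cref{lem:seq-mim-geo} to conclude that the limit is minimizing on $\mathcal{T}$. Your treatment is in fact more careful than the paper's own proof, which asserts without further comment that the limit is a smooth curve and a geodesic; your discussion of the $1$-Lipschitz bound, the distance-realizing property of the limit, and the appeal to completeness/Hopf--Rinow fills in exactly the step the paper leaves implicit.
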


\begin{proof}
Let $c_n(t)$ be an arbitrary sequence in $Min(K,\mathcal{T})$, we must prove $c_n(t)$ has a uniformly convergent subsequence converging to $c(t)$ in $Min(K,\mathcal{T})$.
The space of geodesics $Min(K,\mathcal{T})$ is uniformly bounded and smooth in compact interval $\mathcal{T}$, then $Min(K,\mathcal{T})$ is a equi-continuous family of geodesics. By Arzela-Ascoli theorem, every sequence $c_n(t)$ in $Min(K,\mathcal{T})$ has a convergent subsequence $c_{n_s}(t)$ converging uniformly to a smooth curve $c(t)$. By Lemma \ref{lem:seq-mim-geo} $c(t)$ is  minimizing in $\mathcal{T}$.  
\end{proof}

A useful tool for the proof of Theorem \ref{th:main-1} and \ref{th:main-2} is the following.
\begin{corollary}\label{cor:iso-metr}
Let $c_1(t)$ be a $\R^{3}_{F}$-geodesic in $Min(K,\mathcal{T})$ and let $c_2(t)$  be a $\R^{3}_{F}$-geodesic. If $\varphi(x,y,z)$ is an isometry such that $c_2(\mathcal{T}') \subset \varphi(c_1(\mathcal{T}))$, then $c_2(t)$ is in $Min(\varphi(K),\mathcal{T}')$. 
\end{corollary}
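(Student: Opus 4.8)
The plan is to prove that $c_2|_{\mathcal{T}'}$ is, up to an affine reparametrization $t\mapsto\varepsilon t+\mathrm{const}$ with $\varepsilon\in\{+1,-1\}$, a subsegment of the curve $c_1':=\varphi\circ c_1$; once this is established the conclusion is immediate. I first record the relevant properties of $c_1'$. Since $\varphi\in Iso(\R^3_F)$ is a smooth diffeomorphism preserving the \sR structure, $c_1'$ is a smooth, unit-speed curve, and for all $a,b\in\mathcal{T}$
\[
dist_{\R^3_F}(c_1'(a),c_1'(b)) = dist_{\R^3_F}(c_1(a),c_1(b)) = |a-b| ,
\]
so $c_1'$ is minimizing on $\mathcal{T}$; in particular it is injective on $\mathcal{T}$ (equal values would force $|a-b|=0$), hence a homeomorphism of the compact interval $\mathcal{T}$ onto its trace $\Gamma:=c_1'(\mathcal{T})=\varphi(c_1(\mathcal{T}))$, and being unit speed it is also an immersion, so in fact a smooth embedding.

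Using the hypothesis $c_2(\mathcal{T}')\subset\Gamma$, I would define $\phi:=(c_1')^{-1}\circ c_2:\mathcal{T}'\to\mathcal{T}$. As $\Gamma$ is an embedded submanifold and $c_2$ takes values in $\Gamma$, the map $\phi$ is smooth. Differentiating $c_2=c_1'\circ\phi$ and taking \sR norms of the velocities gives $1=|\dot{c}_2|=|\dot{c}_1'(\phi)|\,|\dot\phi|=|\dot\phi|$, so $|\dot\phi|\equiv1$ on the interval $\mathcal{T}'$; by continuity and connectedness $\dot\phi$ is identically $+1$ or identically $-1$, hence $\phi(t)=\varepsilon t+t^*$ for a constant $t^*$, with $\varepsilon\in\{+1,-1\}$ and $\phi(\mathcal{T}')\subseteq\mathcal{T}$.

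To finish: for any compact $[a,b]\subset\mathcal{T}'$ we have $\phi([a,b])\subset\mathcal{T}$, so using that $c_1'$ is minimizing on $\mathcal{T}$ and that $\phi$ has slope $\pm1$,
\[
dist_{\R^3_F}(c_2(a),c_2(b)) = dist_{\R^3_F}(c_1'(\phi(a)),c_1'(\phi(b))) = |\phi(a)-\phi(b)| = |a-b| ,
\]
i.e. $c_2$ is minimizing on $\mathcal{T}'$. Moreover $c_2(\mathcal{T}')\subset\Gamma=\varphi(c_1(\mathcal{T}))\subset\varphi(K)$ because $c_1(\mathcal{T})\subset K$; combined with the hypothesis that $c_2$ is a $\R^3_F$-geodesic, this is precisely $c_2\in Min(\varphi(K),\mathcal{T}')$.

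The only step with genuine content is the second paragraph: it is where one rules out the a priori possibility that $c_2$ ``folds back'' and re-traces part of the arc $\Gamma$ rather than running monotonically along a subsegment of $c_1'$ — the identity $|\dot\phi|\equiv1$, which itself rests on $c_1'$ being a regular embedded arc (a consequence of its being minimizing), forces $\phi$ to be injective and affine. Everything else is bookkeeping with the fact that isometries preserve \sR distance; implicit throughout is that $\mathcal{T}'$, like $\mathcal{T}$, is a compact interval, which is what makes the connectedness argument for the sign of $\dot\phi$ valid.
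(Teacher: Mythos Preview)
The paper states this corollary without proof. Your argument is correct: the containment $c_2(\mathcal T')\subset\varphi(K)$ is immediate, and for the minimizing claim you correctly show that $c_2$ must be, up to $t\mapsto \pm t+t^*$, a subsegment of the minimizing curve $\varphi\circ c_1$. The one step worth a remark is that $\Gamma=\varphi(c_1(\mathcal T))$ is an embedded arc \emph{with boundary}, so the smoothness of $\phi$ at points mapping to $\partial\Gamma$ needs a word; but your own folding discussion already handles this, since a unit-speed smooth curve in $\Gamma$ cannot reverse direction, hence no interior $t\in\mathcal T'$ can land on $\partial\Gamma$. In the paper's actual applications the corollary is invoked only when $c_2$ and $\varphi\circ c_1$ are geodesics for the same pair $(F,I)$ and therefore agree up to a time translation, so the generality you establish is more than is strictly needed there.
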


\section{Heteroclinic Of The Direct-Type Geodesic}\label{sec:d-t-geodesics}

This section is devoted to proving Theorem \ref{th:main-1}. Let $\gamma_{d}(t)$ be an arbitrary heteroclinic of the direct-type geodesic in $\Je$ for a polynomial $F_{d}(x)$. We will consider the  space $\R^3_{F_{d}}$ and the $\R^3_{F_{d}}$-geodesic $c_{d}(t) := \pi_{F_{d}}(\gamma_{d}(t))$. Then we will prove the following Theorem. 
\begin{theorem1}\label{th1:d-t-me-lin}
Let $\gamma_{d}(t)$ be an arbitrary heteroclinic of the direct-type geodesic in $\Je$ for a polynomial $F_{d}(x)$. If $c_{d}(t) := \pi_{F_{d}}(\gamma_{d}(t))$, then $c_{d}(t)$ is a metric line $\R^3_{{F_{d}}}$.  
\end{theorem1}

Without loss of generality, let us assume $\gamma_{d}(t)$ is a unitary geodesic and let $F_{d}(x)$ has the form given by Corollary \ref{cor:d-t-F}. The goal is to show that for arbitrary $T$ the geodesic is minimizing in the interval $[-T,T]$, the strategy to verify the goal is the following: For all $n >T$, we will take a sequence of geodesics $c_n(t)$ minimizing in the interval $[-n,n]$ and joining the points $c_d(-n)$ and $c_d(n)$, see \ref{fig:h}. Then, we will find convergent subsequence $c_{n_j}(t)$ converging to a $\R^3_{F_d}$-geodesic $c_{\infty}(t)$ in $Min(K,\mathcal{T})$ and isometry $\varphi$ in $Iso(\R^3_{F_{d}})$ such that $c([-T,T]) \subseteq \varphi( c_{\infty}(\mathcal{T}))$, where $K$ is a compact subset of $\R^3_{F_d}$ and $\mathcal{T}$ is a compact interval.  By corollary \ref{cor:iso-metr}, $c_d(t)$ is minimizing in  $[-T,T]$. Since $T$ is arbitrary, $c_d(t)$ is a metric line.

Let $c_{d}(t) = (x(t),y(t),z(t))$. Without loss of generality, we can assume that $0 \leq \dot{x}_{}(t)$ and  $c_{d}(0) = (x(0),0,0)$ for some $x(0)$ in $(0,1)$ since the proof for the case $0 \geq \dot{x}(t)$ is similar and we can use the $t$, $y$, and $z$ translations.

\subsection{The Magnetic Space For Heteroclinic Geodesic}\label{sub-sec:d-t-mag-spa}

\begin{corollary}\label{cor:d-t-The}
Let $q_{max}$ be equal to $\max_{x \in [0,1] } \{ x^{k_1}(1-x)^{k_2}q(x) \}$, where $q(x)$, $k_1$ and $k_2$ are given by Corollary \ref{cor:d-t-F}. The set of all the heteroclinic of the direct-type $\R^3_{F_{d}}$-geodesic with hill interval $[0,1]$ is given by 
$$Pen_{d} := Pen_{d}^+ \cup Pen_{d}^- ,$$
where
\begin{equation*}
\begin{split}
Pen_{d}^+ := & \{ (a,b) =  (s,1-s)  :  \;\; s  \in (\frac{2}{q_{max}},1) \}, \\
Pen_{d}^- := & \{ (a,b) = (-s,s-1)  : \;\; s  \in (\frac{2}{q_{max}},1)\}. \\
\end{split}
\end{equation*}
Moreover, the map $\Theta_2 (G,[0,1]) : Pen_{d}^+ \to \R$ is one to one, and the cost map $Cost(c_{d},[t_0,t_1])$ is bounded by $\Theta_{d} := \Theta_1(F_d,[0,1])$ for all $[t_0,t_1]$.   
\end{corollary}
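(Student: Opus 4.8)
The plan is to identify the pencil elements $G(x) = a + bF_d(x)$ that give rise to heteroclinic direct-type geodesics with hill interval $[0,1]$, then study the map $\Theta_2$ on that set, and finally bound the cost. First I would determine which $(a,b)$ produce a $G$ with hill interval exactly $[0,1]$ of direct-type. Using Corollary \ref{cor:d-t-F}, $F_d(x) = 1 - x^{k_1}(1-x)^{k_2}q(x)$ with $F_d(0)=F_d(1)=1$ and $F_d'(0)=F_d'(1)=0$, so for $G=a+bF_d$ we get $G(0)=G(1)=a+b$ and $G'(0)=G'(1)=0$; hence $0$ and $1$ are automatically critical points of $G$, and $[0,1]$ is a hill interval of direct-type precisely when $|G(0)|=|a+b|=1$ and $|G(x)|<1$ on $(0,1)$. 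The condition $a+b=1$ (resp.\ $a+b=-1$, handled by $R_{\theta_0}$ giving $Pen_d^-$) forces $(a,b)=(s,1-s)$, so $G(x) = s + (1-s)F_d(x) = 1 - (1-s)\,x^{k_1}(1-x)^{k_2}q(x)$. Writing $p(x) := x^{k_1}(1-x)^{k_2}q(x)$, which satisfies $0 < p(x) < 2$ on $(0,1)$ with maximum $q_{max}$, the requirement $-1 < G(x) < 1$ on $(0,1)$ becomes $0 < (1-s)p(x) < 2$, i.e.\ $0 < (1-s)q_{max} < 2$ (using $(1-s)>0$), which is exactly $s \in (\tfrac{2}{q_{max}}, 1)$. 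This gives the description of $Pen_d^+$ and $Pen_d^-$.

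Next I would prove injectivity of $s \mapsto \Theta_2(G_s,[0,1])$ on $Pen_d^+$. Substituting $G_s = 1-(1-s)p$ and $1-F_d = p$ into the defining integral,
\[
\Theta_2(G_s,[0,1]) = 2\int_0^1 \frac{(1-(1-s)p(x))\,p(x)}{\sqrt{(2-(1-s)p(x))(1-s)p(x)}}\,dx.
\]
The cleanest route is to show $\tfrac{d}{ds}\Theta_2(G_s,[0,1])$ has a fixed sign. Differentiating under the integral sign in $s$ (justified since for $s$ in the open interval the integrand and its $s$-derivative are dominated by an integrable function — the only singularities are the integrable $x^{-k_1/2}$, $(1-x)^{-k_2/2}$ type endpoint singularities), one reduces to checking that the resulting integrand is of one sign on $(0,1)$; a direct computation of $\partial_s$ of the integrand, after clearing the common positive factors, should leave an expression that is manifestly positive (or manifestly negative). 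Monotonicity then yields that $\Theta_2(\cdot,[0,1])$ is one-to-one on $Pen_d^+$.

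Finally, for the cost bound: by Corollary \ref{prop:mag-geo-Delta}, $Cost(c_d,[t_0,t_1]) = Cost(F_d,\mathcal{I})$ with $\mathcal{I} = \mathcal{I}[t_0,t_1]$ the travel interval. Since $c_d$ is the direct-type geodesic with hill interval $[0,1]$ and (by the unitary normalization, after possibly applying $R_{\theta_0}$) $x(t)$ is monotone with $x(t)\to 0,1$ as $t\to\mp\infty$, the travel interval $\mathcal{I}[t_0,t_1]$ is a subinterval of $[0,1]$ with no multiplicity. Hence
\[
Cost_t(c_d,[t_0,t_1]) = \int_{\mathcal{I}} \frac{1-F_d(x)}{\sqrt{1-F_d^2(x)}}\,dx \le \int_0^1 \frac{1-F_d(x)}{\sqrt{1-F_d^2(x)}}\,dx = \Theta_1(F_d,[0,1]) = \Theta_d,
\]
the inequality holding because the integrand $\sqrt{(1-F_d(x))/(1+F_d(x))}$ is nonnegative on $[0,1]$; the same bound applies to the second component $Cost_y$ since its integrand $\tfrac{(1-F_d(x))G(x)}{\sqrt{1-G^2}}$ with $G=F_d$ equals the first, giving finiteness and the stated bound.

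The main obstacle I anticipate is the monotonicity of $\Theta_2$ in $s$: the integrand has endpoint singularities so differentiation under the integral needs care, and the sign of the $s$-derivative of the integrand is not obvious before simplification — one may need to split off the factor $(1-s)$ or change variables to make the sign transparent. The other steps (identifying $Pen_d^\pm$ and bounding the cost) are essentially bookkeeping with the formulas from Corollary \ref{cor:d-t-F} and Corollary \ref{prop:mag-geo-Delta}.
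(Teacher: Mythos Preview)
Your plan matches the paper's proof: differentiate $\Theta_2(G_s,[0,1])$ in $s$ to get injectivity, and bound $Cost$ via Corollary~\ref{prop:mag-geo-Delta} using that the travel interval lies in $[0,1]$. Your anticipated obstacle dissolves on contact: since $G_s=s+(1-s)F_d$ gives $\partial_s G_s=1-F_d$ and $\tfrac{d}{dG}\bigl(G/\sqrt{1-G^2}\bigr)=(1-G^2)^{-3/2}$, one obtains
\[
\frac{d}{ds}\,\Theta_2(G_s,[0,1])=\int_0^1 \frac{(1-F_d(x))^2}{(1-G_s^2(x))^{3/2}}\,dx>0
\]
directly, with no splitting or change of variables needed; this is exactly the paper's computation (the paper writes the numerator as $1-F_d$, but either way the sign is immediate). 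Two small slips to fix: from $(1-s)q_{\max}<2$ you should get $s>1-\tfrac{2}{q_{\max}}$, not $s>\tfrac{2}{q_{\max}}$ (the printed statement appears to carry the same typo, and your derivation should have caught it); and in the cost bound, the $Cost_y$ integrand with $G=F_d$ is $F_d\sqrt{(1-F_d)/(1+F_d)}$, not equal to the $Cost_t$ integrand---the bound still follows from $|F_d|\le 1$ on $[0,1]$, which is precisely how the paper argues $|Cost_y|\le Cost_t\le \Theta_d$.
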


\begin{proof}
Since $F_{d}(x) \neq -1$ if $x$ is in $[0,1]$, the constant $\Theta_{d}$ is finite. Let us prove that $Cost(c_{d},[t_0,t_1])$ is bounded by $\Theta_{d}$ for all $[t_0,t_1]$.  Using Corollary \eqref{prop:mag-geo-Delta} and the condition $|F_{d}(x)| \leq 1$ for $x$ in $[0,1]$, we find that:
\begin{equation*}
\begin{split}
 |Cost_y(c_{d},[t_0,t_1])| < & Cost_t(c_{d},[t_0,t_1]) \\
 &   <  2\int_{[0,1]}  \sqrt{\frac{1-F_{d}(x)}{1+F_{d}(x)}} dx =: \Theta_1(F_d,[0,1]). \\
\end{split}
\end{equation*}
   
To prove that $ \Theta_2 (G,[0,1]) : Pen_{F_{d}}^+ \to \R$ is one to one,  we consider the one-parameter family of polynomials $G_s(x) = s+(1-s)F_{d}(x) $. Thus, $ \Theta_2 (G_s,[0,1]) : (0,q_{max}) \to \R$ is one variable function, let us calculate its derivative:
\begin{equation*}
\begin{split}
\frac{d}{ds}  \Theta_2 (G_s,[0,1]) & =   \frac{d}{ds} \int_{[0,1]} \frac{(1-F_{d}(x))G_s(x)}{\sqrt{1-G_s^2(x)}}dx = \int_{[0,1]} \frac{1-F_{d}(x)}{(1-G_s^2(x))^{\frac{3}{2}}}dx. \\
\end{split}
\end{equation*}
Since $ 0 < 1-F_{d}(x)$, then $0 <\frac{d}{ds} \Theta_2 (G_s,[0,1])$.
\end{proof}

We remark that $Pen_{d}^+$ defines the heteroclinic geodesics of the direct-type such that $\lim_{t \to \pm \infty}y(t) = \infty$, while $Pen_{d}^-$ defines the heteroclinic geodesics of the direct-type such that $\lim_{t \to \pm \infty}y(t) = -\infty$.

\begin{lemma}\label{lem:d-t-cal}
Let $\Omega(F_d) = hill(F_d) \times \R^2$ be the region, then  $c_{d}(t)$ is minimizing between the curves that lay in  the region $\Omega(F_d)$.
\end{lemma}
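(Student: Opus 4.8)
The plan is to prove the lemma by exhibiting an explicit calibration on $\Omega(F_{d})$, that is, a closed $1$-form whose integral bounds the length of every admissible curve from below and is sharp exactly along $c_{d}$. The conceptual point is that the restriction to the hill region is precisely what makes such a calibration exist: on $hill(F_{d})$ one has $|F_{d}(x)|\le 1$, so $\sqrt{1-F_{d}^{2}(x)}$ is real, and it is this quantity (which becomes imaginary where $|F_{d}|>1$, the very obstruction to a \emph{global} calibration noted in the introduction) that drives the construction. Recall that $c_{d}$ is the $\R^{3}_{F_{d}}$-geodesic for the pencil $(a,b)=(0,1)$, so $G=F_{d}$; with the normalization $0\le\dot{x}(t)$ and energy $\tfrac12$ its components satisfy $\dot{x}=\sqrt{1-F_{d}^{2}(x)}$, $\dot{y}=F_{d}(x)$ and $\dot{z}=F_{d}^{2}(x)$ by Definition \ref{def:mag-lift} and equation \eqref{eq:ode-y-z}.

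First I would define on $\Omega(F_{d})=hill(F_{d})\times\R^{2}$ the function $\Phi(x,y,z):=z+\int_{x_{0}}^{x}\sqrt{1-F_{d}^{2}(s)}\,ds$, where $x_{0}$ is a base point in the component of $hill(F_{d})$ containing $[0,1]$, and set $\lambda:=d\Phi=\sqrt{1-F_{d}^{2}(x)}\,dx+dz$. By construction $\lambda$ is closed (indeed exact) on $\Omega(F_{d})$. Next I would establish the pointwise calibration inequality. Any horizontal vector has the form $v=u\tilde{X}+w\tilde{Y}=u\partial_{x}+w(\partial_{y}+F_{d}(x)\partial_{z})$, with \sR norm $|v|=\sqrt{u^{2}+w^{2}}$, and $\lambda(v)=\sqrt{1-F_{d}^{2}(x)}\,u+F_{d}(x)\,w$. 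Since $\big(\sqrt{1-F_{d}^{2}}\big)^{2}+F_{d}^{2}=1$ on $hill(F_{d})$, the Cauchy--Schwarz inequality gives $\lambda(v)\le|v|$, with equality if and only if $(u,w)$ is a nonnegative multiple of $(\sqrt{1-F_{d}^{2}},F_{d})$. Along $c_{d}$ one has $(u,w)=(\dot{x},\dot{y})=(\sqrt{1-F_{d}^{2}},F_{d})$, so equality holds identically; in particular $\int_{c_{d}|_{[a,b]}}\lambda=\mathrm{length}(c_{d}|_{[a,b]})=b-a$.

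Finally I would conclude by the standard calibration comparison. Let $\tilde{c}$ be any admissible curve in $\Omega(F_{d})$ sharing the endpoints $c_{d}(a)$ and $c_{d}(b)$. Being connected and contained in $\Omega(F_{d})$, the curve $\tilde{c}$ stays in the single component $J\times\R^{2}$, where $J$ is the interval of $hill(F_{d})$ containing $[0,1]$: to leave $J$ its $x$-coordinate would have to cross a value with $|F_{d}|>1$, exiting $\Omega(F_{d})$. On $J\times\R^{2}$ the potential $\Phi$ is single valued, so $\int_{\tilde{c}}\lambda=\Phi(c_{d}(b))-\Phi(c_{d}(a))=\int_{c_{d}}\lambda$. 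Combining with the pointwise bound yields $\mathrm{length}(\tilde{c})=\int|\dot{\tilde{c}}|\ge\int_{\tilde{c}}\lambda=\int_{c_{d}}\lambda=\mathrm{length}(c_{d}|_{[a,b]})$, which is exactly the claim that $c_{d}$ minimizes among curves lying in $\Omega(F_{d})$.

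The step I expect to be the main obstacle is not computational but a matter of care in setting up the calibration: one must verify that $\lambda$ is genuinely defined and closed on all of $\Omega(F_{d})$, which is exactly where the hill-region hypothesis is indispensable and where the global obstruction mentioned in the introduction evaporates. One must also justify the path-independence $\int_{\tilde c}\lambda=\Phi(c_d(b))-\Phi(c_d(a))$ for competitors that are merely absolutely continuous horizontal curves, i.e.\ confirm that such curves cannot escape the component $J\times\R^{2}$ carrying $c_{d}$, so that $\Phi$ evaluated at the shared endpoints gives the same value for $\tilde c$ and for $c_{d}$.
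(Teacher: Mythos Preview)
Your proof is correct and follows precisely the approach the paper invokes: the paper does not prove the lemma in-line but simply cites the calibration function on $\Omega(F_d)$ constructed in \cite[Section 5]{RM-ABD}, and what you have written is exactly that calibration argument made explicit ($\lambda=\sqrt{1-F_d^{2}(x)}\,dx+dz$, calibrating $c_d$ via Cauchy--Schwarz on horizontal vectors). Your handling of the domain issue---restricting to the connected component $J\times\R^{2}$ of $\Omega(F_d)$ so that $\Phi$ is single-valued---is the right way to make the endpoint comparison rigorous.
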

The proof is consequence of the calibration function defined on the region $\Omega(F_d)$ and  provided in \cite[Section 5]{RM-ABD}.

\begin{corollary}\label{cor:d-t-T-start}
There exist  $T^*_{d} > 0$ such that $y_{d}(t) > 0$ if $T^*_{d} < t$, and $y_{d}(t) < 0$ if $-T^*_{d} > t$.
\end{corollary}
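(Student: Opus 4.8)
The plan is to analyze the asymptotic behavior of the $y$-component $y_d(t)$ of the heteroclinic geodesic $c_d(t) = (x(t), y(t), z(t))$. Recall that $c_d(t) = \pi_{F_d}(\gamma_d(t))$ corresponds to the pencil $(a,b) = (0,1)$, i.e. $G(x) = F_d(x)$, and since $\gamma_d(t)$ is a unitary heteroclinic of the direct-type geodesic we have $F_d(x(t)) \to 1$ and $x(t) \to x_0 \in \{0,1\}$ as $t \to \pm\infty$; by the normalization in the text we assume $0 \le \dot x(t)$, so $x(t) \to 0$ as $t \to -\infty$ and $x(t) \to 1$ as $t \to +\infty$, with $c_d(0) = (x(0),0,0)$ for some $x(0) \in (0,1)$.

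First I would recall from equation \eqref{eq:ode-y-z} that $\dot y(t) = G(x(t)) = F_d(x(t))$. For $t$ large and positive, $x(t)$ is close to $1$, hence $F_d(x(t))$ is close to $1$ and in particular positive; more precisely, choose $\delta > 0$ small enough that $F_d(x) > 0$ for all $x \in (1-\delta, 1]$ (possible since $F_d(1) = 1$ and $F_d$ is continuous). Since $x(t) \to 1$, there is $T_1 > 0$ with $x(t) \in (1-\delta, 1]$ for all $t > T_1$, so $\dot y(t) > 0$ for $t > T_1$. This shows $y_d$ is eventually strictly increasing. Symmetrically, for $t$ large and negative, $x(t)$ is close to $0$; choosing $\delta'$ with $F_d(x) > 0$ on $[0, \delta')$ and $T_2 < 0$ with $x(t) \in [0,\delta')$ for $t < T_2$, we get $\dot y(t) > 0$ for $t < T_2$ as well — so $y_d$ is strictly increasing near both ends.

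The remaining point is to upgrade "eventually increasing" to "eventually positive" with a uniform threshold, which follows from Corollary \ref{cor:d-t-The}: for the pencil $(0,1)$ we have $\lim_{t\to\pm\infty} G(x(t)) = 1$, so $\lim_{n\to\infty} Cost_t(c_d,[-n,n])$ is finite, equivalently $\Delta y(c_d,[-n,n]) = \Delta t(c_d,[-n,n]) - Cost_t(c_d,[-n,n]) \to \infty$ as $n \to \infty$ (since $\Delta t \to \infty$ and $Cost_t$ stays bounded by $\Theta_d$). Hence $y_d(t) - y_d(-t) \to \infty$ and, combined with monotonicity of $y_d$ near $\pm\infty$ and $y_d(0) = 0$, one extracts a single $T^*_d > 0$ such that $y_d(t) > 0$ for $t > T^*_d$. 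For the symmetric statement $y_d(t) < 0$ for $t < -T^*_d$, I would apply the isometry/reflection argument: either directly observe that on $Pen_d^+$ (resp. $Pen_d^-$) the sign of $\dot y$ is controlled, or use that $y_d$ is monotone near $-\infty$ and $y_d(t) \to -\infty$ as $t \to -\infty$ would follow if $\dot y > 0$ there and $y_d(0) = 0$ — wait, that gives $y_d(t) < 0$ for $t < 0$ small, and monotonicity plus $y_d(-t) \to -\infty$ closes it. Taking $T^*_d$ to be the max of the two thresholds gives the claim.

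The main obstacle is making sure the threshold can be chosen uniformly — i.e. that $y_d$ does not oscillate in sign before settling down. This is where one must be careful: $\dot y = F_d(x(t))$ can change sign for intermediate values of $x(t) \in (0,1)$ where $F_d$ dips below $0$, so $y_d$ need not be monotone on all of $\R$. The resolution is that $x(t)$ is itself monotone (since $0 \le \dot x(t)$, and in fact $\dot x(t) > 0$ on the open heteroclinic orbit because $p_x = \dot x = \sqrt{1 - F_d^2} > 0$ strictly in the interior), so once $t$ is large enough that $x(t)$ has entered $(1-\delta,1]$, it stays there and $\dot y > 0$ thereafter; similarly at the left end. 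Thus $y_d$ is genuinely monotone on $(T_1, \infty)$ and on $(-\infty, T_2)$, and the finiteness of $\lim Cost_t$ forces $y_d \to +\infty$ at $+\infty$ and $y_d \to -\infty$ at $-\infty$ (tracking which case of $Pen_d^{\pm}$ we are in; in the other case swap signs and apply $R_{\theta_0}$). Continuity and $y_d(0)=0$ then pin down a single $T^*_d$.
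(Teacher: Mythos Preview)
Your proof is correct and rests on the same fact the paper invokes: $y_d(t)\to+\infty$ as $t\to+\infty$ and $y_d(t)\to-\infty$ as $t\to-\infty$, after which continuity and $y_d(0)=0$ immediately yield the threshold $T^*_d$. The paper's proof is a one-liner (``by construction, $\lim_{t\to\infty}y_d(t)=\infty$ and $\lim_{t\to-\infty}y_d(t)=-\infty$''), so your write-up is really a fleshed-out justification of that assertion rather than a different approach.

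One remark: your detour through the boundedness of $Cost_t$ to get $\Delta y(c_d,[-n,n])\to\infty$ works but is unnecessary. You already observed that $\dot y(t)=F_d(x(t))\to 1$ as $t\to\pm\infty$; choosing $T_1$ so that $F_d(x(t))>\tfrac12$ for $t>T_1$ gives $y_d(t)\ge y_d(T_1)+\tfrac12(t-T_1)\to\infty$ directly, and symmetrically at $-\infty$. This also removes the need for the separate monotonicity-plus-difference argument and the slightly awkward ``wait'' step in your exposition.
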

\begin{proof}
By construction, $\lim_{t\to \infty}  y_{d}(t) = \infty$ and $\lim_{t\to -\infty} \Delta y_{d}(0) = -\infty$.
\end{proof}

\begin{definition}\label{def:comp-k}
If $\mathcal{T} :=  [t_0,t_1]$, we define the following set
\begin{equation}
\begin{split}
Com([0,1]) := \big\{  (c(t),\mathcal{T}): & \;\; c(t)\;\text{is a}\; \R^3_{F_{d}}\text{-geodesic},\\
 &\;   x(t_0) \in [0,1]  \text{ and}\;x(t_1) \in [0,1] \big\}.
\end{split}
\end{equation} 
\end{definition}

\begin{lemma}\label{lem:d-t-bounded-trav-interval}
Let us consider a sequence $(c_n(t),[-n,n])$ in $Com([0,1])$. If $Cost(c_n,[-n,n])$ is uniformly bounded, then there exists a compact subset $K_{x}$ of $\R$ such that $\mathcal{I}_n[-n,n] \subset K_{x}$ for all $n$.
\end{lemma}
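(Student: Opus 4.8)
The plan is to argue by contradiction using the divergence of the ``Cost'' integrand near the bad endpoints of the pencil. Suppose no such compact $K_x$ exists. Since each pair $(c_n(t),[-n,n])$ lies in $Com([0,1])$, the travel interval $\mathcal{I}_n[-n,n]$ always contains points of $[0,1]$ (its endpoints $x_n(-n),x_n(n)$ are in $[0,1]$), so the only way $\mathcal{I}_n$ can fail to stay in a fixed compact set is that $\sup \mathcal{I}_n \to +\infty$ or $\inf \mathcal{I}_n \to -\infty$ along a subsequence. First I would recall from the Background Theorem and the correspondence in sub-Section \ref{sec:mag-spa} that each $c_n$ corresponds to a polynomial $G_n \in Pen_{F_d}$ with its own hill interval $I_n$, and that $\mathcal{I}_n[-n,n]$ is covered by at most finitely many copies of $I_n$ (at most one more than the number of completed $x$-periods), with all of $I_n$ traversed once the geodesic leaves a neighborhood of $[0,1]$. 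Hence $\mathcal{I}_n \not\subset K_x$ for every compact $K_x$ forces the hill intervals $I_n$ themselves to become unbounded.

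Next I would show that unbounded hill intervals force the Cost to blow up. Write $G_n(x) = a_n + b_n F_d(x)$. Because $F_d$ is a fixed nonconstant polynomial of degree $\le k$, $|F_d(x)| \to \infty$ as $|x| \to \infty$; for the hill interval $I_n$ of $G_n$ to extend out to some $x$ with $|x|$ large we need $|G_n(x)| \le 1$ there, which pins down $(a_n,b_n)$: either $b_n \to 0$ (so $G_n \to a_n$ with $|a_n|\le 1$, a near-constant polynomial whose hill ``interval'' is essentially all of $\R$) or $b_n$ stays bounded away from $0$ and $a_n$ must be chosen so that $a_n + b_n F_d$ stays in $[-1,1]$ on a long interval, which is impossible for a genuinely nonconstant polynomial on an interval of growing length unless $b_n\to 0$. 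In either regime $G_n$ converges (uniformly on compacta, after passing to a subsequence) to a constant $g_\infty$ with $|g_\infty|\le 1$. The key estimate is then
\begin{equation*}
Cost_t(c_n,[-n,n]) = \int_{\mathcal{I}_n} \frac{1-G_n(x)}{\sqrt{1-G_n^2(x)}}\,dx = \int_{\mathcal{I}_n} \sqrt{\frac{1-G_n(x)}{1+G_n(x)}}\,dx .
\end{equation*}
If $g_\infty \neq 1$ the integrand is bounded below by a positive constant on a set whose length $|\mathcal{I}_n| \to \infty$, so $Cost_t \to \infty$; if $g_\infty = 1$ then on the long stretch where $x$ is large we have $F_d(x)$ large and $1 - G_n(x)$ of order $|b_n F_d(x)|$ while $1 + G_n(x)$ is of order $2$, so again the integrand does not decay fast enough and the integral over the growing interval diverges. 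Either way $Cost_t(c_n,[-n,n]) \to \infty$, contradicting the hypothesis that $Cost(c_n,[-n,n])$ is uniformly bounded. (Since $|Cost_y| \le Cost_t$ by Definition \ref{def:cost-f-time}, boundedness of $Cost$ is equivalent to boundedness of $Cost_t$.)

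The main obstacle I anticipate is making the ``long hill interval forces $G_n$ near-constant'' step fully rigorous and uniform: one must rule out the possibility that $G_n$ oscillates inside $[-1,1]$ across a long interval, and quantify how the excursion of $\mathcal{I}_n$ away from $[0,1]$ translates into a definite lower bound for the Cost integral that survives the limit. A clean way to handle this is to note that $\mathcal{I}_n$ is a union of translates/reflections of the single hill interval $I_n = [x_0^n, x_1^n]$, so it suffices to bound $\int_{I_n}\sqrt{(1-G_n)/(1+G_n)}\,dx$ from below in terms of $|x_1^n - x_0^n|$ and the coefficients of $F_d$; this reduces everything to a one-variable estimate on a single polynomial $G_n$ constrained to $[-1,1]$ on $I_n$, which is routine once set up. The rest — extracting subsequences, identifying the limiting constant, and deriving the contradiction — is bookkeeping.
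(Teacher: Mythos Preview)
Your overall architecture matches the paper's: argue by contradiction, observe that an unbounded travel interval forces the hill intervals $I_n$ of $G_n\in Pen_F$ to become long, rescale $G_n$ to a unitary polynomial $\hat G_n$ on $[0,1]$, extract a uniform limit $\hat G$, and split into the cases $\hat G\not\equiv 1$ and $\hat G\equiv 1$. In the first case your Fatou-type lower bound on $\int\sqrt{(1-G_n)/(1+G_n)}$ times the diverging length $u_n$ is exactly what the paper does.

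The gap is in the case $g_\infty=1$. You assert that $Cost_t$ still diverges because ``the integrand does not decay fast enough,'' and later call the needed one-variable estimate ``routine.'' It is not: $Cost_t$ alone can stay bounded while $u_n\to\infty$. For instance, with $F(x)=1-2x^2$ take $G_n(x)=1-\tfrac1n+\tfrac{2}{n^2}x^2\in Pen_F$. The hill interval is $I_n=[-\sqrt{n/2},\sqrt{n/2}]$, so $u_n\to\infty$; the $x$-period is of order $n$, so in time $2n$ the geodesic covers $I_n$ a bounded number of times; yet a direct substitution gives
\[
\int_{I_n}\sqrt{\frac{1-G_n}{1+G_n}}\,dx\;\sim\;C\,\frac{\delta_n}{\sqrt{|b_n|}}\;=\;C\cdot\frac{1/n}{1/n}\;=\;C,
\]
bounded. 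So your proposed contradiction via $Cost_t$ does not fire. The paper's proof recognizes this and, precisely in the $\hat G\equiv 1$ case, switches to the \emph{second} component $Cost_y=\int_{\mathcal I_n}\frac{(1-F(x))G_n(x)}{\sqrt{1-G_n^2(x)}}\,dx$: after isolating the compact set of zeros of $1-F$, the factor $1-F(x)$ grows polynomially on the long excursion while $G_n\ge\tfrac12$, forcing $|Cost_y|\to\infty$. That use of $Cost_y$ (and the attendant parity observation on $F$ when both hill endpoints run off to $\pm\infty$) is the missing idea in your plan.

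A small side remark: your parenthetical ``$|Cost_y|\le Cost_t$ by Definition~\ref{def:cost-f-time}'' is not true in general; Corollary~\ref{cor:d-t-The} proves it only for the particular geodesic $c_d$ with $G=F_d$ on $[0,1]$, not for arbitrary $G_n$ on arbitrary $\mathcal I_n$. This does not affect the logic (bounded $Cost$ trivially gives bounded $Cost_t$), but it does mean you cannot discard $Cost_y$ from the hypothesis, which is consistent with the fact that $Cost_y$ is exactly what rescues the $\hat G\equiv 1$ case.
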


The proof is Appendix \ref{proof:bounded-trav-interval}.

\subsection{ Proof of Theorem \ref{th1:d-t-me-lin}}\label{sub-sec:proof-the-A}

\subsubsection{ Set Up The Proof Of Theorem \ref{th1:d-t-me-lin}}\label{sub-sub:set-up-proof}

Let $T$ be arbitrarily large and consider the sequence of points $c_{d}(-n)$ and $c_{d}(n)$ where $T <n$ and $n$ is in $\mathbb{N}$. Let $c_n(t) = (x_n(t),y_n(t),z_n(t))$ be a sequence of minimizing $\R^3_{F_{d}}$-geodesics, in the interval $[0,T_n]$  such that:
\begin{equation}\label{eq:d-t-end-mat-con}
c_n(0) = c_{d}(-n),\;\;  c_n(T_n) = c_{d}(n) \;\; \text{and} \;\; T_n \leq n.
\end{equation}
We call the equations and inequality from \eqref{eq:d-t-end-mat-con} the endpoint conditions and the shorter condition, respectively. If $c_n(t)$ is geodesic for the polynomial $G_n(x)$ and a hill interval $I_n$, then Proposition \ref{Maxwell} implies $T_n \leq L(G_n,I_n)$. 
Since the endpoint condition holds for all $n$, then the sequence $c_{n}(t)$ holds asymptotic conditions:
\begin{equation}\label{eq:d-t-asy-cond}
\begin{split}
\lim_{n\to\infty} c_{n}(0) & = (0,-\infty,-\infty), \;\; \lim_{n\to\infty} c_{n}(T_n) = (1,\infty,\infty), \\
\end{split}
\end{equation} 
and the asymptotic period condition:
\begin{equation}\label{eq:d-t-period-asy-cond}
\begin{split}
\lim_{n \to \infty} Cost_y(c_n,[0,T_n] ) & = \frac{1}{2}\Theta_2(F_d,[0,1]) . \\
\end{split}
\end{equation}

\begin{figure}%
    \centering
    {{\includegraphics[width=7cm]{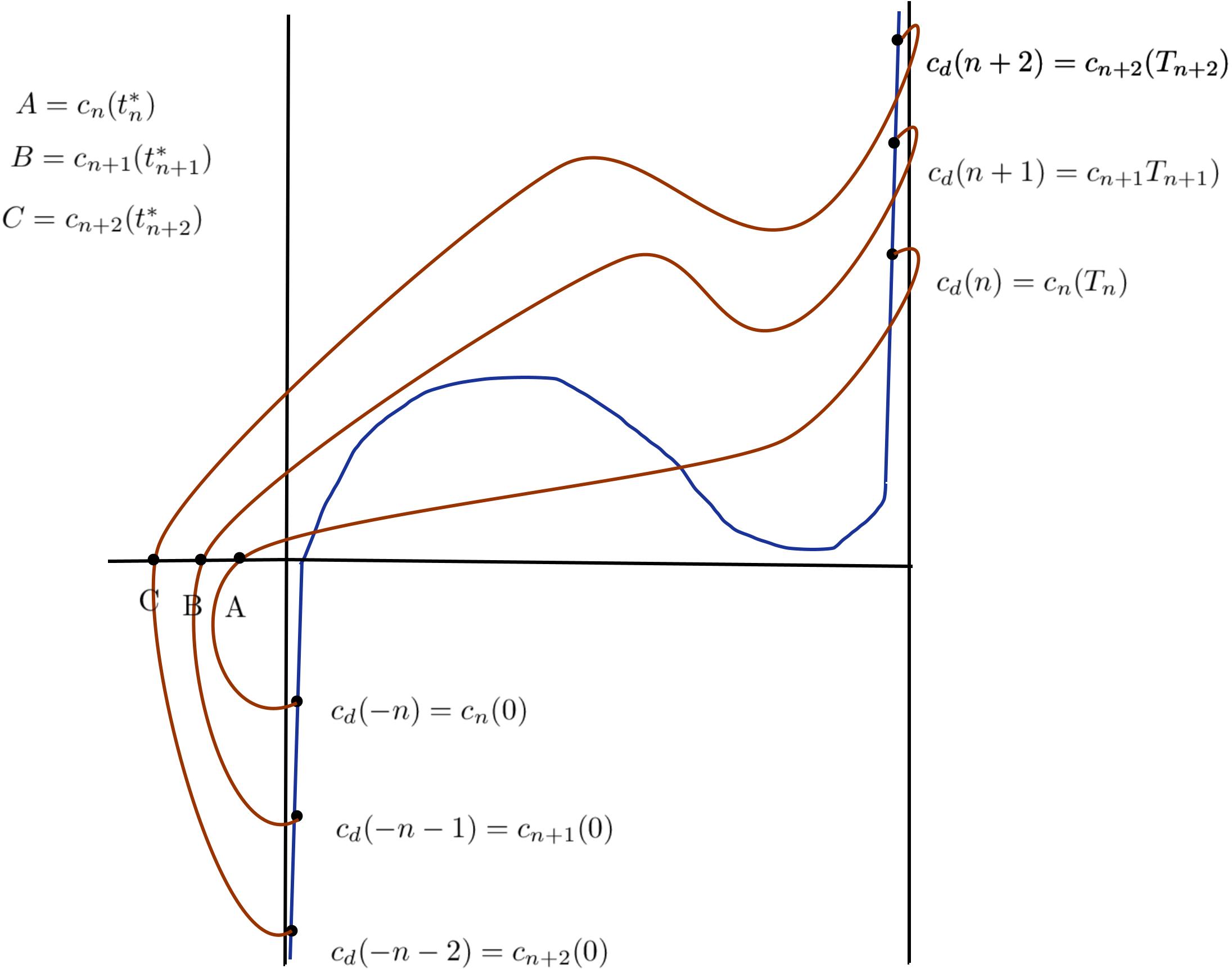} }}
    \caption{The images show the projection to $\R^2$, with coordinates $(x,y)$, of a heteroclinic of the direct-type geodesic $c_d(t)$ and the sequence of geodesics $c_n(t)$.} 
    \label{fig:h}
\end{figure}

\begin{corollary}\label{cor:non-line-geo}
The sequence of $\R^3_{F_{d}}$-geodesics $c_n(t)$ is not a sequence of geodesic lines and does not converge to a geodesic line. In particular, $c_n(t)$ does not converge to an abnormal geodesic.
\end{corollary}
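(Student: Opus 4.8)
The plan is to rule out, one by one, the ways in which the sequence $c_n(t)$ — or any of its uniform limits — could be a geodesic line. Recall that by the classification in Section \ref{sec:clas-geo-je}, a $\R^3_{F_d}$-geodesic is a line precisely when it corresponds to a constant polynomial $G_n(x) \equiv \text{const}$ in $Pen_{F_d}$ (equivalently, a trivial solution of the reduced dynamics), and that by Lemma \ref{lemma:abn-geo} the abnormal geodesics are special line-like geodesics tangent to $\tilde Y$ sitting over a critical point of $F_d$; so it suffices to treat the line case and the abnormal case is subsumed. First I would suppose, for contradiction, that some $c_n(t)$ is a geodesic line. A geodesic line in $\R^3_{F_d}$ with $H_{\pi^3_F} = \tfrac12$ corresponding to a constant pencil polynomial $G_n \equiv g$ has $|g| \le 1$, constant $x$-coordinate, and hence (by \eqref{eq:ode-y-z}) affine $y$ and $z$ with $\dot y = g$, $\dot z = gF(x)$. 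But the endpoint conditions \eqref{eq:d-t-end-mat-con} force $x_n(0) = 0 \ne 1 = x_n(T_n)$, contradicting constancy of $x_n$. Hence no $c_n(t)$ is a geodesic line.

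Next I would rule out the limit being a line. Suppose a subsequence $c_{n_j}(t)$ converges uniformly to $c_\infty(t)$, a $\R^3_{F_d}$-geodesic that is a line, so $c_\infty$ has constant $x$-coordinate $x^* \in [0,1]$ and $\dot y_\infty \equiv g$, $\dot z_\infty \equiv g F_d(x^*)$ for some $|g|\le 1$. The key point is the asymptotic period condition \eqref{eq:d-t-period-asy-cond}: $\lim_{n\to\infty} Cost_y(c_n,[0,T_n]) = \tfrac12 \Theta_2(F_d,[0,1])$, which is a fixed nonzero number depending only on $F_d$ (nonzero because, by Corollary \ref{cor:d-t-F}, $1 - F_d(x) > 0$ on $(0,1)$, so $\Theta_2(F_d,[0,1]) = 2\int_0^1 \tfrac{(1-F_d)F_d}{\sqrt{1-F_d^2}}\,dx \ne 0$). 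On the other hand, for a line geodesic one computes directly from Corollary \ref{prop:mag-geo-Delta} that $Cost_y$ over any interval where $x \equiv x^*$ is $(1-F_d(x^*)) g \cdot (\text{elapsed }t$-proportion$)/\sqrt{1-g^2}$ — in particular it is linear in the travel and does not match the fixed target unless the geodesic genuinely moves in $x$. Making this precise: a line limit would force $Cost_y(c_{n_j},\cdot)$ to behave like that of $c_\infty$ on long intervals, which is incompatible with the convergence of $Cost_y(c_n,[0,T_n])$ to the strictly positive constant $\tfrac12\Theta_2(F_d,[0,1])$ while the asymptotic conditions \eqref{eq:d-t-asy-cond} simultaneously demand $y_n(0)\to -\infty$ and $y_n(T_n)\to +\infty$, i.e. $\Delta y(c_n,[0,T_n])\to +\infty$, forcing $T_n \to \infty$. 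A nonconstant-$x$ geodesic is needed to accumulate the bounded cost $Cost_y$ while $\Delta y \to \infty$, because a line would need $T_n \to \infty$ with $g$ bounded away from $0$, and then $Cost_y = g(1-F_d(x^*))T_n/\sqrt{1-g^2} \to \infty$, contradicting \eqref{eq:d-t-period-asy-cond}.

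I would organize the write-up as: (i) the trivial observation that the $c_n$ themselves are not lines, from $x_n(0)=0\ne 1=x_n(T_n)$; (ii) the observation that $\Theta_2(F_d,[0,1]) > 0$ strictly, via Corollary \ref{cor:d-t-F}; (iii) the contradiction for a line limit, combining $\Delta y(c_n,[0,T_n]) \to +\infty$ (hence $T_n\to\infty$) with the bounded target for $Cost_y$ and the explicit linear-in-time formula for $Cost_y$ of a line geodesic from Corollary \ref{prop:mag-geo-Delta}; (iv) the final sentence noting that abnormal geodesics are line geodesics by Lemma \ref{lemma:abn-geo}, so they too are excluded. The main obstacle I anticipate is item (iii): one must be careful that uniform convergence $c_{n_j}\to c_\infty$ only controls the geodesics on compact time intervals, whereas the asymptotic and period conditions are statements about $T_n \to \infty$; the clean way around this is not to pass to the limit in $Cost_y$ directly but to argue by contradiction at finite $n_j$, using that if $c_{n_j}$ were $C^1$-close to a line on $[0,T_{n_j}]$ then its $x$-coordinate stays near $x^*$ and its $Cost_t$, $Cost_y$ grow essentially linearly in $T_{n_j}$, which overshoots the bound $\Theta_d$ from Corollary \ref{cor:d-t-The} once $T_{n_j}$ is large — and $T_{n_j}\to\infty$ is forced by $\Delta y \to \infty$. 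That last boundedness fact, $Cost_t(c_d,\cdot)\le \Theta_d$, together with the analogous bound one must establish for the minimizing $c_n$ (which follows since $c_n$ is shorter than $c_d$ restricted to $[-n,n]$), is what makes the argument close.
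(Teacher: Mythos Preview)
There is a genuine gap in your characterization of line geodesics. You assert that a geodesic line in $\R^3_{F_d}$ corresponding to a constant pencil polynomial $G\equiv g$ has constant $x$-coordinate, and both your step (i) and your $Cost_y$ computation in step (iii) rest on this. But it is false: for $|g|<1$ the reduced Hamiltonian $\tfrac12(p_x^2+g^2)=\tfrac12$ gives $p_x=\pm\sqrt{1-g^2}\ne 0$, so $x(t)$ varies \emph{linearly} and is not constant. Only the degenerate cases $g=\pm 1$ (and the trivial solutions sitting at critical points) have constant $x$. Consequently step (i) does not exclude such slanted lines, and the formula $Cost_y = g(1-F_d(x^*))T_n/\sqrt{1-g^2}$ you use in step (iii), which plugs in a fixed $x^*$, does not apply to them either. (A minor related slip: for finite $n$ one has $x_n(0)=x_d(-n)\in(0,1)$ and $x_n(T_n)=x_d(n)\in(0,1)$, not $0$ and $1$; what is true and useful is $x_n(0)<x_n(T_n)$.)

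The paper closes this gap with a tool you did not invoke: Lemma~\ref{lem:d-t-cal}, the calibration lemma, which says $c_d$ already minimizes among all competitors that remain in $\Omega(F_d)=hill(F_d)\times\R^2$. Hence any strictly shorter $c_n$ must leave $\Omega(F_d)$ and return. A line geodesic cannot do this: its $x$-coordinate is either constant or strictly monotone, and in neither case can it exit and re-enter a fixed $x$-interval. This disposes of all line geodesics at once, including the slanted ones you missed. For the non-convergence statement the paper also argues more directly than your $Cost_y$ route: the travel interval $\mathcal I_n$ contains $[x_d(-n),x_d(n)]\to[0,1]$, whereas a line limit with $y\to\pm\infty$ and bounded travel interval would have to be vertical, hence have travel interval a single point --- contradiction. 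Your $Cost_y$ strategy might be repairable to cover slanted lines, but (as you yourself flag) it is delicate because uniform convergence only controls compacta while the relevant quantities live at time $T_n\to\infty$; the calibration/travel-interval argument is shorter and handles the missing case without that difficulty.
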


\begin{proof}
Lemma \ref{lem:d-t-cal} implies that if $c_n(t)$ is shorter than $c_d(t)$, then $c_n(t)$ must leave the region $\Omega(F_d)$ and come back. Thus $c_n(t)$ is a geodesic for non-constant polynomial $G_n(x)$, and $c_n(t)$ is not a geodesic line.

Let $\mathcal{I}_n$ travel interval of $c_n(t)$, then $c_n(t)$ cannot converge to a geodesic line, since $\lim_{n\to \infty}\mathcal{I}_n = [0,1]$ and the only line in the plane $(x,y)$ that travel from $y = -\infty$ into $y = \infty$ in a fine travel interval is the vertical line, but the travel interval of the vertical line is a single point. In particular, Lemma \ref{lemma:abn-geo} implies $c_n(t)$ cannot converge to an abnormal geodesic. 
\end{proof}

The construction of the $\R^3_{F_{d}}$-geodesic $c_n(t)$ is such that the initial condition $c_n(0)$ is not bounded. The following Proposition provides a bounded initial condition.
\begin{proposition}\label{prp:d-r-ini-con-comp}
Let $n$ be a natural number larger than  $T^*_{d}$, where $T^*_{d}$ is given by Corollary \ref{cor:d-t-T-start}, and let $K_0 := K_{x} \times [-1,1] \times [-\Theta_{d},\Theta_{d}]$ be a compact set, where $K_{x}$  is the compact set from Lemma  \ref{lem:d-t-bounded-trav-interval} and $\Theta_{d}$ the constant provided by Corollary \ref{cor:d-t-The}. Then there exist a time $t_n^* \in (0,T_n)$ such that $c_n(t_n^*)$ is in $K_0$ for all $n > T^*_{d}$.
\end{proposition}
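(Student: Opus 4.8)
The plan is to use the bounded-cost machinery to locate a point of each $c_n$ inside a fixed compact box, by first controlling the $x$-coordinate via the travel interval, then controlling the $y$- and $z$-coordinates via the cost bound. First I would observe that the sequence $(c_n(t),[0,T_n])$ lies in $Com([0,1])$ in a suitably shifted parametrization: indeed the endpoint conditions \eqref{eq:d-t-end-mat-con} and the asymptotic conditions \eqref{eq:d-t-asy-cond} say that $x_n(0)\to 0$ and $x_n(T_n)\to 1$, so for $n$ large both endpoints lie in a small neighborhood of $[0,1]$; after enlarging $[0,1]$ slightly (or noting that $c_n$ passes through $x\in[0,1]$ by the intermediate value theorem since it travels from near $x=0$ to near $x=1$), Lemma \ref{lem:d-t-bounded-trav-interval} applies provided $Cost(c_n,[0,T_n])$ is uniformly bounded. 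But Corollary \ref{cor:d-t-The} gives exactly this: $Cost(c_d,[t_0,t_1])$ is bounded by $\Theta_d$ for every interval, and since $c_n$ is minimizing with the same endpoints as $c_d|_{[-n,n]}$, its length $T_n \le 2n$ is no larger than that of $c_d|_{[-n,n]}$; translating the length inequality into a cost inequality via $Cost_t = \Delta t - \Delta y$ and the fact that $\Delta y(c_n,[0,T_n]) = \Delta y(c_d,[-n,n])$ (equal endpoints) shows $Cost_t(c_n,[0,T_n]) \le Cost_t(c_d,[-n,n]) < \Theta_d$. Thus Lemma \ref{lem:d-t-bounded-trav-interval} yields a compact $K_x \subset \R$ with $\mathcal{I}_n \subset K_x$ for all $n$.

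Next I would pin down the time $t_n^*$. Since $y_n(0) = y_d(-n) \to -\infty$ and $y_n(T_n) = y_d(n) \to +\infty$, and $y_n$ is continuous, by the intermediate value theorem there is $t_n^* \in (0,T_n)$ with $y_n(t_n^*) \in [-1,1]$; more carefully, to get $t_n^*$ strictly interior and to avoid the trivial solution, one picks $t_n^*$ to be, say, the first time $y_n$ hits $0$ (which exists and is in $(0,T_n)$ for $n$ large because the endpoints have opposite sign and are bounded away from $0$ once $n > T_d^*$, using Corollary \ref{cor:d-t-T-start} only to guarantee the endpoints of $c_d$ are already past the sign change). At this $t_n^*$ we have $x_n(t_n^*) \in \mathcal{I}_n \subset K_x$ and $y_n(t_n^*) = 0 \in [-1,1]$. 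Finally, for the $z$-coordinate: $|z_n(t_n^*) - z_n(0)| = |\Delta z(c_n,[0,t_n^*])| \le \Delta y(c_n,[0,t_n^*]) + |Cost_y(c_n,[0,t_n^*])|$; here $|\Delta y(c_n,[0,t_n^*])| \le |y_n(t_n^*)| + |y_n(0)|$ is not yet bounded, so instead I would bound $z_n(t_n^*)$ directly using $Cost_y(c_n,[0,t_n^*]) = \Delta y - \Delta z$ together with the monotone-cost property: $Cost_y$ accumulated over any sub-interval is dominated in absolute value by $Cost_t$ over that sub-interval (from $|\Delta y| \le \Delta t$ applied to the integrand representation in Corollary \ref{prop:mag-geo-Delta}), hence $|Cost_y(c_n,[0,t_n^*])| \le Cost_t(c_n,[0,t_n^*]) \le Cost_t(c_n,[0,T_n]) < \Theta_d$ since $Cost_t$ is additive and nonnegative over sub-intervals. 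Combined with $y_n(t_n^*)=0=y_n(0)$... wait — one must be careful that $y_n(0) = y_d(-n) \ne 0$, so to control $z$ I would instead choose the basepoint of the $\Delta z$ computation to be $t_n^*$ itself relative to another interior point, or better: recenter so that $c_n(t_n^*)$ becomes the origin and observe $z_n(t_n^*)$ is determined only up to the $z$-translation which we are free to use. The cleanest route is to fix the $y,z$-translation normalization so that $c_d(0) = (x(0),0,0)$ as already assumed, and then note $z_n(t_n^*)$ lies in $[-\Theta_d,\Theta_d]$ because $z_n$ changes by at most $Cost_y + \Delta y$ over $[0,t_n^*]$ and, choosing $t_n^*$ as the first $y$-zero after the first time $y_n$ returns to a bounded window, the $\Delta y$ contribution over that terminal stretch is itself controlled.

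The main obstacle I anticipate is precisely this last point: unlike $x$ and $y$, the $z$-coordinate is not obviously confined to a compact window without invoking the cost bound in a way that requires $y_n$ to be near $0$ at $t_n^*$ simultaneously with $y_n$ near $0$ at the basepoint — which forces a more delicate choice of $t_n^*$ (e.g. the first time after $T_d^*$-type bounds that $y_n$ crosses $0$, so that over $[\text{previous }0\text{-crossing of }y_d, t_n^*]$ all three increments are cost-controlled). Once $c_n(t_n^*) \in K_0 := K_x \times [-1,1] \times [-\Theta_d,\Theta_d]$ is established, the Proposition is proved. The remaining steps of the overall argument — extracting a convergent subsequence of the reparametrized geodesics $c_n(t+t_n^*)$ via Proposition \ref{prp:seque-comp}, identifying the limit $c_\infty$ as a heteroclinic direct-type geodesic for $F_d$ using Corollary \ref{cor:non-line-geo} and the injectivity of $\Theta_2$ on $Pen_d^+$ from Corollary \ref{cor:d-t-The}, and then applying Corollary \ref{cor:iso-metr} — are carried out in the subsequent sub-sub-sections and are not needed here.
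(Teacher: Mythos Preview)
Your handling of the $x$- and $y$-coordinates is essentially what the paper does: use the cost bound to invoke Lemma \ref{lem:d-t-bounded-trav-interval} for $x_n(t)\in K_x$, and use the intermediate value theorem (with Corollary \ref{cor:d-t-T-start} guaranteeing $y_n(0)<0<y_n(T_n)$ once $n>T_d^*$) to produce $t_n^*$ with $y_n(t_n^*)=0$.

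The gap is in the $z$-coordinate. All of your attempts there collapse for the reason you yourself flag: $z_n(0)=z_d(-n)\to-\infty$, so bounding $\Delta z(c_n,[0,t_n^*])$ alone cannot bound $z_n(t_n^*)$, and neither recentering nor a more delicate choice of $t_n^*$ repairs this. The idea you are missing is to \emph{compare $c_n$ on $[0,t_n^*]$ with $c_d$ on $[-n,0]$}. These two arcs share the same initial point $c_d(-n)=c_n(0)$ and both terminate at $y=0$ (by the normalization $c_d(0)=(x(0),0,0)$ and the choice of $t_n^*$), hence
\[
\Delta y(c_n,[0,t_n^*]) \;=\; \Delta y(c_d,[-n,0]).
\]
Writing $\Delta z=\Delta y - Cost_y$ for each arc and subtracting, the common (unbounded) $\Delta y$ and the common (unbounded) initial $z$-value $z_n(0)=z_d(-n)$ cancel, leaving
\[
z_n(t_n^*) \;=\; Cost_y(c_d,[-n,0]) \;-\; Cost_y(c_n,[0,t_n^*]),
\]
which is a difference of two uniformly bounded quantities. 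That is the mechanism that traps $z_n(t_n^*)$ in $[-\Theta_d,\Theta_d]$; without this comparison the argument does not close.
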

\begin{proof}
Let $n$ be a natural number larger than  $T^*_d$. By construction, $y_n(0) < 0$ and $y_n(T_n) > 0$, the intermediate value theorem implies that exist a $t_n^*$ in $(0,T_n)$ such that $y_n(t_n^*) = 0$. 
Since $Cost(c_{n},[0,T_n])$ is bounded, by Lemma \ref{lem:d-t-bounded-trav-interval}, there exists a compact set $K_{x}$ such that $x_n(t)$ is in $K_{x}$ for all $t$ in $[0,T_n]$. 

Let us prove that $|z_n(t_n^*)| \leq \Theta_{d}$: the endpoint conditions imply 
$$\Delta y(c_{d},[-n,n]) = \Delta y(c_{n},[0,T_n])\;\; \text{and} \;\; \Delta z(c_{d},[-n,n]) = \Delta z(c_{n},[0,T_n]).$$
So $Cost_y (c_{d},[-n,n]) = Cost_y (c_{n},[0,T_n])$ and by definition of  $Cost_y$, it follows that:
\begin{equation*}
\begin{split}
 z_n(t^*_n) - z_n(0) = \Delta z(c_n,[0,t^*_n]) & =  \Delta y(c_n,[0,t^*_n]) - Cost_y(c_n,[0,t^*_n]),  \\
z_{d}(0) - z_{d}(-n) =  \Delta z(c_{d},[-n,0]) & = \Delta y(c_{d},[-n,0]) - Cost_y(c_{d},[-n,0]). \\
\end{split}
\end{equation*}
By construction, $\Delta y(c_n,[0,t^*_n])  = \Delta y(c_{d},[-n,0])$, $z_{d}(0) = 0$ and $z_n(0) = z_{d}(-n)$, then 
$$| z_n(t^*_n) |  = |Cost_y(c_n,[0,t^*_n]) - Cost_y(c_{d},[-n,0])| \leq   \Theta_{d}, $$
since Corollary \ref{cor:d-t-The} says $Cost_y (c_{n},[0,T_n])$ is bounded by $\Theta_{d}$. We just proved $c_n(t_n^*)$ is in $K_0$.
\end{proof}

Let us reparametrize the sequence of minimizing $\R^3_{F_{d}}$-geodesics $c_{n}(t)$. Let $\tilde{c}_{n}(t)$ be a minimizing $\R^3_{F_{d}}$-geodesic in the interval $\mathcal{T}_n:=[-t^*_n,T_n-t^*_n]$ given by $\tilde{c}_{n}(t) := c_{n}(t + t^*_n)$. Then, $\tilde{c}_{n}(0)$ is bounded and $\tilde{c}_{n}(t)$ is a minimizing $\R^3_{F_{d}}$-geodesics in the interval $\mathcal{T}_n$. 
\begin{corollary}\label{cor:d-t-sub-seq-inter}
There exists a subsequence $\mathcal{T}_{n_j}$ such that $\mathcal{T}_{n_j} \subset \mathcal{T}_{n_{j+1}}$.
\end{corollary}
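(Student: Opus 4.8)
The plan is to extract the convergent subsequence first and then argue that the intervals $\mathcal{T}_n = [-t^*_n, T_n - t^*_n]$ must nest along it. First I would record two boundedness facts. On the left endpoint: since $t^*_n \in (0,T_n)$ and $t^*_n$ is the time at which $y_n = 0$, while $c_n(0) = c_d(-n)$ has $y$-coordinate $y_d(-n) \to -\infty$ and $c_n$ is minimizing, the reparametrized geodesic $\tilde{c}_n$ has $\tilde{c}_n(0) \in K_0$ by Proposition \ref{prp:d-r-ini-con-comp}. Moreover $\tilde{c}_n$ is minimizing on $\mathcal{T}_n$, so it is a geodesic for some polynomial $G_n$ in $Pen_{F_d}$ with hill interval $I_n$, and by the shorter condition $T_n \le n$ together with Proposition \ref{Maxwell} we have $T_n \le L(G_n,I_n)$. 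Because $\tilde{c}_n(0)$ stays in the fixed compact set $K_0$ and the geodesics are parametrized by arclength, the family $\{\tilde{c}_n\}$ restricted to any fixed compact time interval is uniformly bounded and equi-Lipschitz; by Arzelà–Ascoli (as in the proof of Proposition \ref{prp:seque-comp}) we obtain a subsequence $\tilde{c}_{n_j}$ converging uniformly on compact sets to a $\R^3_{F_d}$-geodesic $c_\infty(t)$.

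The next step is to control the lengths $|\mathcal{T}_n| = T_n$ and the split point $t^*_n$ so that the intervals grow. Here the key input is the asymptotic period condition \eqref{eq:d-t-period-asy-cond}: $\lim_n Cost_y(c_n,[0,T_n]) = \tfrac12 \Theta_2(F_d,[0,1])$, together with the fact that $c_n(0) = c_d(-n) \to (0,-\infty,-\infty)$ and $c_n(T_n) = c_d(n) \to (1,\infty,\infty)$, force $T_n \to \infty$: indeed if $T_n$ stayed bounded along a subsequence, the arclength of $c_n$ would stay bounded, contradicting that its endpoints escape to infinity in the $(y,z)$-directions. Similarly, since $y_n(0) = y_d(-n) \to -\infty$ and $y_n$ increases from $y_n(0)$ to $y_n(t^*_n) = 0$ while $|\dot y_n|\le 1$, we get $t^*_n \ge |y_d(-n)| \to \infty$; the symmetric argument using $y_n(T_n) = y_d(n) \to +\infty$ gives $T_n - t^*_n \to \infty$. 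Hence both endpoints of $\mathcal{T}_n = [-t^*_n,\,T_n - t^*_n]$ march off to $-\infty$ and $+\infty$ respectively along the subsequence $n_j$ already chosen.

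Finally, passing to a further subsequence of $n_j$ (still denoted $n_j$), I would use that $-t^*_{n_j} \to -\infty$ monotonically after passing to a subsequence (any real sequence tending to $-\infty$ has a strictly decreasing subsequence) and likewise $T_{n_j} - t^*_{n_j} \to +\infty$ has a strictly increasing subsequence; diagonalizing, we get a single subsequence along which both $-t^*_{n_j}$ is decreasing and $T_{n_j}-t^*_{n_j}$ is increasing, which is exactly $\mathcal{T}_{n_j} \subset \mathcal{T}_{n_{j+1}}$. The main obstacle I anticipate is not any single estimate but assembling the divergences correctly: one must be sure that $T_n \to \infty$ and, separately, that the split point $t^*_n$ is not pinned near an endpoint of $[0,T_n]$ — both follow from the endpoint conditions \eqref{eq:d-t-asy-cond} and the bound $|\dot y_n| \le 1$, but the argument should be spelled out so that the nesting, not merely $|\mathcal{T}_n| \to \infty$, is obtained. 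Once the nesting holds, the uniform convergence of $\tilde{c}_{n_j}$ on each $\mathcal{T}_{n_j}$ is what lets the later argument place $c_\infty$ in a set $Min(K,\mathcal{T})$.
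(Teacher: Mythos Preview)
Your argument is correct and follows essentially the same route as the paper: show that $-t^*_n \to -\infty$ and $T_n - t^*_n \to +\infty$ (the paper deduces this in one line from $\tilde{c}_n(0)$ being bounded while the endpoints $\tilde{c}_n(-t^*_n) = c_d(-n)$ and $\tilde{c}_n(T_n - t^*_n) = c_d(n)$ are unbounded; your $|\dot y_n| \le 1$ estimate is simply a more explicit version of this), and then extract a nested subsequence. The Arzel\`a--Ascoli and convergence material you front-load is unnecessary for this corollary---it belongs to Proposition~\ref{prp:seque-comp} and Lemma~\ref{prp:d-t-convergent-sequece}---but the core of your proof matches the paper's.
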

\begin{proof}
On one side $\tilde{c}_{n}(0)$ is bounded, on the other side $c(-t^*_n)$ and $c(T_n-t^*_n)$ are unbounded. Then  $[-t^*_n,T_n-t^*_n] \to [-\infty,\infty]$ when $n \to \infty$, and we can take a subsequence of intervals $\mathcal{T}_{n_j}$ such that $\mathcal{T}_{n_j} \subset \mathcal{T}_{n_{j+1}}$. 
\end{proof}
For simplicity, we will use the notation $\mathcal{T}_{n}$ for the subsequence $\mathcal{T}_{n_j}$.

\begin{lemma}\label{prp:d-t-convergent-sequece}
Let $N$ be a natural number larger than $T^*_d$. Then there exist compact set $K_N \subset \R^{3}_F$ such that $c_n(t)$ is in $Min(K_N,\mathcal{T}_N)$ if $n > N$.
\end{lemma}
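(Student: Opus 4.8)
The plan is to produce, for each fixed $N > T^*_d$, a single compact set $K_N \subset \R^3_{F_d}$ that simultaneously contains $\tilde c_n(\mathcal T_N)$ for every $n > N$; combined with the fact (already established) that each $\tilde c_n$ is minimizing on its whole domain $\mathcal T_n \supset \mathcal T_N$, this gives $\tilde c_n \in Min(K_N,\mathcal T_N)$. (I write $c_n$ for $\tilde c_n$ throughout, following the paper's convention.) The three coordinates are controlled separately. For the $x$-coordinate: by Corollary \ref{cor:d-t-The} the cost map $Cost(c_d,\cdot)$ is bounded by $\Theta_d$, and the endpoint conditions \eqref{eq:d-t-end-mat-con} together with Lemma \ref{lem:cost-fun-int} force $Cost(c_n,[0,T_n])$ to be bounded by $Cost(c_d,[-n,n]) \le \Theta_d$ as well (the $c_n$ are no longer than $c_d$, hence cost no more); so Lemma \ref{lem:d-t-bounded-trav-interval} applies and yields a fixed compact $K_x \subset \R$ with $\mathcal I_n[0,T_n]\subset K_x$, i.e. $x_n(t)\in K_x$ for all $t\in\mathcal T_n$ and all $n$. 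In particular this bound holds on $\mathcal T_N$.

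For the $y$- and $z$-coordinates, the idea is to anchor at $t=0$ using Proposition \ref{prp:d-r-ini-con-comp}: after the reparametrization, $c_n(0)=\tilde c_n(0) = c_n(t_n^*)$ lies in the fixed compact set $K_0 = K_x\times[-1,1]\times[-\Theta_d,\Theta_d]$ for every $n>T^*_d$. Then for any $t\in\mathcal T_N=[\alpha,\beta]$ (a fixed interval, independent of $n$, by Corollary \ref{cor:d-t-sub-seq-inter}) I estimate the displacement from $t=0$: since $c_n$ is parametrized by arc length, $|y_n(t)-y_n(0)|\le |\Delta y(c_n,[0,t])| \le |t| \le \max(|\alpha|,|\beta|) =: R_N$, so $|y_n(t)|\le 1+R_N$. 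For $z$, write $z_n(t)-z_n(0) = \Delta z(c_n,[0,t]) = \Delta y(c_n,[0,t]) - Cost_y(c_n,[0,t])$; the first term is bounded by $R_N$ as just noted, and the second by $\Theta_d$ since (as above) $Cost(c_n,[0,T_n])$ is bounded by $\Theta_d$ and $Cost_y$ only increases in absolute value under... — more carefully, $|Cost_y(c_n,[0,t])|\le Cost_t(c_n,[0,t])\le Cost_t(c_n,[0,T_n])\le \Theta_d$ because $Cost_t$ is an integral of a nonnegative integrand over the travel interval and the travel interval for $[0,t]$ is contained in that for $[0,T_n]$. Hence $|z_n(t)|\le \Theta_d + R_N + \Theta_d$. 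Setting
\[
K_N := K_x \times [-(1+R_N),\,1+R_N] \times [-(2\Theta_d+R_N),\,2\Theta_d+R_N]
\]
gives a compact subset of $\R^3_{F_d}$ with $c_n(\mathcal T_N)\subset K_N$ for all $n>N$, and since each such $c_n$ is minimizing on $\mathcal T_n\supseteq\mathcal T_N$ it is minimizing on $\mathcal T_N$; therefore $c_n\in Min(K_N,\mathcal T_N)$.

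The main obstacle is the uniform bound on $Cost(c_n,[0,T_n])$ — everything else is a soft arc-length estimate once that is in hand. The subtlety is that $Cost_t$ bounds $|Cost_y|$ and that $Cost_t$ is monotone under shrinking the travel interval (both from Corollary \ref{prop:mag-geo-Delta}, where $Cost_t(G,\mathcal I)=\int_{\mathcal I}\frac{1-G(x)}{\sqrt{1-G^2(x)}}dx$ has a nonnegative integrand), so that the bound $\Theta_d$ transferred from $c_d$ via the shorter-than condition propagates to every truncation $[0,t]$. One should check that the comparison $Cost(c_n,[0,T_n])\le Cost(c_d,[-n,n])$ is legitimate: it follows because $c_n$ and $c_d$ share endpoints $c_d(\pm n)$, so $\Delta y$ agree, and $c_n$ being minimizing is at least as short as $c_d$, hence by Lemma \ref{lem:cost-fun-int} (applied with the roles as stated) $Cost_t(c_n,[0,T_n])\le Cost_t(c_d,[-n,n])\le\Theta_d$, and likewise the $Cost_y$ components agree on the full interval. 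This is exactly the hypothesis needed to invoke Lemma \ref{lem:d-t-bounded-trav-interval}.
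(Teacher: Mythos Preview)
Your overall strategy matches the paper's: show (i) each $\tilde c_n$ is minimizing on $\mathcal T_N \subset \mathcal T_n$, and (ii) all $\tilde c_n(\mathcal T_N)$ lie in a single compact set, by anchoring at $\tilde c_n(0)\in K_0$ (Proposition~\ref{prp:d-r-ini-con-comp}) and using uniform arc-length bounds over the fixed interval $\mathcal T_N$. The paper's proof is essentially a one-liner to this effect. Your explicit product construction of $K_N$ is fine in spirit, but there is a genuine gap in the $z$-coordinate bound.

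The inequality $|Cost_y(c_n,[0,t])| \le Cost_t(c_n,[0,t])$ that you invoke is \emph{not} established anywhere for the minimizers $c_n$. Corollary~\ref{cor:d-t-The} proves this only for $c_d$, i.e.\ when $G=F_d$: there the two integrands are $\tfrac{(1-F_d)F_d}{\sqrt{1-F_d^2}}$ and $\tfrac{1-F_d}{\sqrt{1-F_d^2}}$, and $|F_d|\le 1$ on $[0,1]$ gives the pointwise comparison. For $c_n$ with polynomial $G_n\neq F_d$ the integrands become $\tfrac{(1-F_d)G_n}{\sqrt{1-G_n^2}}$ and $\tfrac{1-G_n}{\sqrt{1-G_n^2}}$, and no such comparison holds: when $G_n$ is close to $1$ while $F_d$ is not, the first can dominate the second. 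Moreover $c_n$ must leave the region $\Omega(F_d)$ by Lemma~\ref{lem:d-t-cal}, so $1-F_d(x_n(t))$ is not even of constant sign along $c_n$. Thus the chain $|Cost_y|\le Cost_t\le \Theta_d$ breaks at the first step, and the bound $|z_n(t)|\le 2\Theta_d+R_N$ is unjustified as written.

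The fix is immediate with the ingredients you already have. You have shown $x_n(t)\in K_x$ for all $t\in\mathcal T_n$ via Lemma~\ref{lem:d-t-bounded-trav-interval}; set $M:=\max_{x\in K_x}|F_d(x)|$. Then from $\dot z_n = F_d(x_n)\dot y_n$ and $|\dot y_n|\le 1$ one gets $|\dot z_n(t)|\le M$, hence $|z_n(t)-z_n(0)|\le M R_N$ and $|z_n(t)|\le \Theta_d + M R_N$ for $t\in\mathcal T_N$. Replace the $z$-interval in your $K_N$ by $[-(\Theta_d+MR_N),\Theta_d+MR_N]$ and the argument goes through. This direct Lipschitz bound on $(x,y,z)$, anchored at $K_0$, is precisely what the paper's terse phrase ``family of smooth functions defined on the compact set $\mathcal T_N$'' is gesturing at.
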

\begin{proof}
Since $\tilde{c}_{n}(t)$  is minimizing on the interval $\mathcal{T}_{n}$, it follows that $\tilde{c}_{n}(t)$  is minimizing on the interval $\mathcal{T}_N \subset \mathcal{T}_{n}$ if $n >N$. Moreover, there exists a compact set $K_N$ such that $\tilde{c}_{n}(\mathcal{T}_N) \subset K_N$, since $c_n(0)$ is in $K_0$ and $c_n(t)$ is a family of smooth functions defined on the compact set $\mathcal{T}_N$.
\end{proof}
Therefore, $\tilde{c}_{n}(t)$ has a convergent subsequence $\tilde{c}_{n_j}(t)$ converging to a $\R^3_{F_{d}}$-geodesic $c_{\infty}(t)$ in $Min(K_n,\mathcal{T}_N)$. Corollary \ref{cor:non-line-geo} implies that $c_{\infty}(t)$ is a normal $\R^3_{F_{d}}$-geodesic, then we can associate $c_{\infty}(t)$ to a polynomial $G(x)$ in $Pen_{F_d}$.
The following Lemma tells the $G(x) = F_{d}(x)$.
\begin{lemma}\label{prp:d-t-F-uniqueness}
$G(x) = F_{d}(x)$ is the unique polynomial in the pencil of $F_{d}(x)$ satisfying the asymptotic conditions given by \eqref{eq:d-t-asy-cond} and \eqref{eq:d-t-period-asy-cond}.
\end{lemma}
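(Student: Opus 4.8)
The plan is to pin down $G(x)$ using the two pieces of asymptotic data. First I would extract from \eqref{eq:d-t-asy-cond} that the limiting geodesic $c_\infty(t)$ must have its travel interval equal to $[0,1]$: since $\lim_{n\to\infty}\mathcal{I}_n = [0,1]$ (as in the proof of Corollary \ref{cor:non-line-geo}), the travel interval of $c_\infty$ is $[0,1]$, and moreover the endpoints of $[0,1]$ are reached as $t\to\pm\infty$, so $c_\infty$ is a heteroclinic or homoclinic $\R^3_{F_d}$-geodesic with hill interval containing $[0,1]$. Because $x_\infty(t)\to 0$ and $x_\infty(t)\to 1$ as $t\to\mp\infty$ without returning, both $x=0$ and $x=1$ must be critical points of $G$, with $|G(0)|=|G(1)|=1$; combined with $G\in Pen_{F_d}$, i.e. $G = a+bF_d$, and $F_d(0)=F_d(1)=1$, $F_d'(0)=F_d'(1)=0$, the critical-point condition at $0$ and $1$ is automatic for every $G\in Pen_{F_d}$, so the real content is $a+b=\pm 1$ together with $|G(x)|\le 1$ on $[0,1]$; this forces $(a,b)\in Pen_d^+\cup Pen_d^-$ in the notation of Corollary \ref{cor:d-t-The} (with $a+b=1$; the case $a+b=-1$ is excluded because $y_n$ runs from $-\infty$ to $+\infty$, so $\int G>0$, which by the remark after Corollary \ref{cor:d-t-The} selects $Pen_d^+$).

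Next I would use \eqref{eq:d-t-period-asy-cond}. Since $\tilde c_{n_j}\to c_\infty$ uniformly on compacts and the travel intervals converge to $[0,1]$, passing to the limit in $Cost_y(c_{n_j},[0,T_{n_j}])$ and using Corollary \ref{prop:mag-geo-Delta} gives
\begin{equation*}
\lim_{j\to\infty} Cost_y(c_{n_j},[0,T_{n_j}]) = \int_{[0,1]} \frac{(1-F_d(x))G(x)}{\sqrt{1-G^2(x)}}\,dx = \tfrac{1}{2}\Theta_2(G,[0,1]),
\end{equation*}
so \eqref{eq:d-t-period-asy-cond} says $\tfrac12\Theta_2(G,[0,1]) = \tfrac12\Theta_2(F_d,[0,1])$, i.e. $\Theta_2(G,[0,1]) = \Theta_2(F_d,[0,1])$. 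Now Corollary \ref{cor:d-t-The} states precisely that $\Theta_2(\cdot,[0,1]): Pen_d^+\to\R$ is one-to-one (its proof computes $\frac{d}{ds}\Theta_2(G_s,[0,1])>0$ along the parametrization $G_s = s+(1-s)F_d$). Since $F_d$ itself corresponds to $(a,b)=(0,1)$, which is the value $s=0$ — on the boundary of the parameter range, realized as the limit $s\to 0^+$ — the injective, strictly monotone function $\Theta_2(G_s,[0,1])$ attains the value $\Theta_2(F_d,[0,1])$ only at $s=0$, i.e. only at $G=F_d$. Hence $G(x)=F_d(x)$.

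The main obstacle I anticipate is the limit interchange at $s\to 0^+$: the polynomial $F_d$ lies on the boundary of the open pencil $Pen_d^+$ (it is the heteroclinic geodesic with $G(x(t))\to 1$, so the integrand of $\Theta_2$ is improper at the endpoints), and one must check that $\Theta_2(G_s,[0,1])\to\Theta_2(F_d,[0,1])$ as $s\to 0^+$ and that $\Theta_2$ is strictly increasing up to and including this boundary value — this is where the smoothness remark after Definition \ref{def:y-z-period} (that $\Theta_1$, and by the same token $\Theta_2$, extends smoothly to the heteroclinic-of-direct-type locus where $G(x(t))\to 1$) does the work. A secondary point to handle carefully is justifying the passage to the limit inside the integral defining $Cost_y$, for which I would invoke the uniform bound $Cost(c_n,[0,T_n])\le\Theta_d$ from Corollary \ref{cor:d-t-The} together with the convergence of travel intervals from Lemma \ref{lem:d-t-bounded-trav-interval}, giving a dominated-convergence argument. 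Once $G=F_d$ is established, $c_\infty$ is a heteroclinic-of-direct-type $\R^3_{F_d}$-geodesic for $F_d$ itself, and the proof of Theorem \ref{th1:d-t-me-lin} can proceed by relating $c_\infty$ to $c_d$ via an isometry in $Iso(\R^3_{F_d})$ and applying Corollary \ref{cor:iso-metr}.
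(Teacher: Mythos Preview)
Your proof follows essentially the same route as the paper's: use the asymptotic conditions \eqref{eq:d-t-asy-cond} to force $G(0)=a+b=1$, placing $(a,b)$ in $Pen_d^+$, then use the asymptotic period condition \eqref{eq:d-t-period-asy-cond} to match $\Theta_2$-values and invoke the injectivity of $\Theta_2(\cdot,[0,1])$ on $Pen_d^+$ from Corollary~\ref{cor:d-t-The}. Your write-up is in fact more careful than the paper's about the passage to the limit in $Cost_y$ and about why $Pen_d^-$ is excluded.

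One small correction to your discussion of obstacles: the point $(a,b)=(0,1)$, i.e.\ $s=0$, is \emph{not} on the boundary of $Pen_d^+$. Working out the condition $|G_s(x)|<1$ on $(0,1)$ for $G_s=s+(1-s)F_d$ gives $s\in(1-\tfrac{2}{q_{\max}},1)$ (the printed lower endpoint in Corollary~\ref{cor:d-t-The} appears to be a typo), and since $q_{\max}<2$ strictly (otherwise $F_d$ would hit $-1$ in the interior, contradicting the hill condition), we have $1-\tfrac{2}{q_{\max}}<0$, so $s=0$ lies in the interior. Thus your worry about extending the strict monotonicity of $\Theta_2$ to a boundary value does not arise; the injectivity statement of Corollary~\ref{cor:d-t-The} already covers $F_d$ directly.
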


\begin{proof}
By Proposition \ref{prp:seque-comp}, $\tilde{c}_{n}(t)$ has a convergent subsequence $\tilde{c}_{n_s}(t)$ converging to a minimizing geodesic $\tilde{c}(t)$ on the interval $\mathcal{T}_N$. Being a $\R^3_{F_{d}}$-geodesic, $c(t)$ is associated to a polynomial $G(x) = a +bF_{d}(x)$. $G(0)= a+b$ must be equal $1$, to satisfy the asymptotic conditions given by \eqref{eq:d-t-asy-cond}. Then $(a,b)$ is in $Pen_{d}^+$, the set defined in Corollary \ref{cor:d-t-The}. Since the map $\Theta_1(a,b):Pen_{d}^+ \to \R$ is one to one, the unique polynomial in $Pen_{d}^+$ satisfying the condition  \eqref{eq:d-t-asy-cond} and \eqref{eq:d-t-period-asy-cond} is $G(x) = F_{d}(x)$. 
\end{proof}

\subsubsection{ Proof of Theorem \ref{th1:d-t-me-lin}}\label{sub-sec:proof-theo-1}

\begin{proof}
Let $\tilde{c}_n(t)$ be the sequence of geodesics defined by the endpoint conditions \eqref{eq:d-t-end-mat-con}. By Lemma \ref{prp:d-t-convergent-sequece}, for all $N > T^*_d$ there exist a compact set $K_N$ such that $\tilde{c}(t)$ is in $Min(K_N,\mathcal{T}_N)$ if $n > N$. By Proposition \ref{prp:seque-comp}, there exist a subsequence $\tilde{c}_{n_j}(t)$ converging to a $\R^3_{F_{d}}$-geodesic $c_{\infty}(t)$ in $Min(K_N,\mathcal{T}_N)$.
Corollary \ref{cor:non-line-geo} implies that $c_{\infty}(t)$ is a normal geodesic for a polynomial $G(x)$ in $Pen_{F_d}^+$. Lemma \ref{prp:d-t-convergent-sequece} tells that $G(x) = F_d(x)$.

Since $c_{\infty}(t)$ and $c_{d}(t)$ are $\R^3_{F_{d}}$-geodesics for $F_{d}(x)$ with the same hill interval, there exists a translation $\varphi_{(y_0,z_0)}$ in $Iso(\R^3_{F_{d}})$ sending $c_{\infty}(t)$ to $c_{d}(t)$. Using $N$ is arbitrary and $c_{d}([-T,T])$ is bounded, we can find compact sets
$K := K_N$ and $\mathcal{T} := \mathcal{T}_N$ such that $c_{d}([-T,T]) \subset \varphi_{(y_0,z_0)} (c_{\infty}(\mathcal{T}))$ and $c_{\infty}$ is in $Min(K,\mathcal{T})$. Corollary \ref{cor:iso-metr} implies that $c_{d}(t)$ is minimizing in $[-T,T]$ and $T$ is arbitrarily. Therefore,  $c_{d}(t)$ is a metric line in $\R^3_{F_d}$.
\end{proof}

\subsubsection{ Proof of Theorem \ref{th:main-1}}\label{sub-sec:proof-main-the-1}

\begin{proof}
Let $\gamma_{d}(t)$ be an arbitrary heteroclinic of the direct-type geodesic. By Theorem \ref{th1:d-t-me-lin}, $c_{d}(t) := \pi_{F_{d}}(\gamma_{d}(t))$ is a metric line in $\R^3_{F_d}$. Since $\pi_{F_d}$ is a \sR submersion and $\gamma_d(t)$ is the lift of $c_d(t)$, then Proposition \ref{prp:sR-sub-metric-line} implies $\gamma_{d}(t)$  is a metric line in $\Je$.
\end{proof}

\section{Homoclinic Geodesics In Jet Space} \label{sec:h-geodesics}

This chapter is devoted to proving Theorem \ref{th:main-2}. Let $\gamma_{h}(t)$ be the homoclinic geodesic in $\Je$ for $F_{h}(x) := \pm(1-bx^{2n})$.  We will consider the space $\R^3_{F_{h}}$ and the geodesic $c_{h}(t) := \pi_{F_{h}}(\gamma_{h}(t))$, then we will prove the following Theorem.
\begin{theorem1}\label{th1:h-me-lin}
Let $\gamma_{h}(t)$ be an arbitrary homoclinic geodesic in $\Je$ for the polynomial $F_{h}(x) := \pm(1-bx^{2n})$. If $c_{h}(t) := \pi_{F_{h}}(\gamma_{h}(t))$, then $c_{h}(t)$ is a metric line $\R^3_{{F_{h}}}$.  
\end{theorem1}
Without loss of generality, we will consider the polynomial $F_{h}(x) := 1-2x^{2n}$ with hill intervl $[0,1]$. The strategy to prove Theorem \ref{th1:d-t-me-lin} is the same as the one used for Theorem \ref{th1:h-me-lin}.

Before prove Theorem \ref{th1:h-me-lin}, we present the following. 
\begin{theorem1}\label{th1:h-no-me-lin}
Let $\R^3_{F_{h}}$ be the magnetic space for the  polynomial $F_h(x) := 1-2x^{2n+1}$ and let $c(t)$ be the homoclinic $\R^3_{F_{h}}$-geodesic corresponding to $F_h(x)$. Then $c(t)$ is not a metric line $\R^3_{F_{h}}$. 
\end{theorem1}
Theorem \ref{th1:h-no-me-lin} say that we cannot use the the magnetic space $\R^3_F$ to the Conjecture \ref{con:1} for the general homoclinic case. Since the method used to prove Theorem \ref{th1:h-me-lin} does not work for the odd case $F(x) := 1-2x^{2n+1}$. The proof of Theorem \ref{th1:h-no-me-lin} is in Appendix \ref{sec:th1_:h-non-me-lin}.

\subsection{The Magnetic Space For the Homoclinic Geodesics}

Without loss of generality, $c_{h}(0) = (1,0,0)$, by use of the $t$, $y$ and $z$ translations. By the time reversibility of the reduced Hamiltonian $H_F$ given by \eqref{eq:fund-eq-jet-space}, it follows that $x(-n) = x(n)$ and $\Delta x(c_h,[-n,n])  := x(n) - x(-n) = 0$ for all $n$.

\begin{lemma}\label{lem:h-The}
Let $c_{h}(t)$ be the homoclinic $\R^3_{F_{h}}$-geodesic for $F_{h}(x) := 1-2x^{2n}$,  then
$$ \Theta_2(F_h,[0,1]) < 0. $$
\end{lemma}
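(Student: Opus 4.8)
The statement to prove is that $\Theta_2(F_h,[0,1]) < 0$ for $F_h(x) = 1 - 2x^{2n}$. Recall from Definition~\ref{def:y-z-period} that, since here $c_h$ is the geodesic corresponding to the pencil parameter $(a,b)=(0,1)$, we have $G = F_h$, so
\begin{equation*}
\Theta_2(F_h,[0,1]) = 2\int_0^1 F_h(x)\,\frac{1-F_h(x)}{\sqrt{1-F_h^2(x)}}\,dx
= 2\int_0^1 F_h(x)\sqrt{\frac{1-F_h(x)}{1+F_h(x)}}\,dx.
\end{equation*}
The plan is to substitute $F_h(x) = 1 - 2x^{2n}$, so that $1 - F_h(x) = 2x^{2n}$ and $1 + F_h(x) = 2(1-x^{2n})$, giving $\sqrt{(1-F_h)/(1+F_h)} = x^{n}/\sqrt{1-x^{2n}}$ for $x\in(0,1)$. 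Hence
\begin{equation*}
\Theta_2(F_h,[0,1]) = 2\int_0^1 (1-2x^{2n})\,\frac{x^{n}}{\sqrt{1-x^{2n}}}\,dx.
\end{equation*}
So the whole claim reduces to showing this one-variable integral is negative.

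The key step is to evaluate (or at least sign) the integral $I_n := \int_0^1 (1-2x^{2n}) x^n (1-x^{2n})^{-1/2}\,dx$. I would split it as $I_n = A_n - 2B_n$ with $A_n = \int_0^1 x^n(1-x^{2n})^{-1/2}dx$ and $B_n = \int_0^1 x^{3n}(1-x^{2n})^{-1/2}dx$, then use the substitution $u = x^{2n}$, $du = 2n x^{2n-1}dx$, which turns each into a Beta-function integral: $x^n dx = \tfrac{1}{2n} u^{(n+1)/(2n) - 1}\,du$ up to bookkeeping, so $A_n = \tfrac{1}{2n} B\!\left(\tfrac{n+1}{2n}, \tfrac12\right)$ and $B_n = \tfrac{1}{2n} B\!\left(\tfrac{3n+1}{2n}, \tfrac12\right)$. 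Using $B(p,q) = \Gamma(p)\Gamma(q)/\Gamma(p+q)$ and the recursion $\Gamma(p+1) = p\,\Gamma(p)$, the ratio $B_n/A_n$ becomes an explicit rational function of $n$, and one checks that $I_n = A_n(1 - 2 B_n/A_n) < 0$, i.e.\ that $B_n/A_n > 1/2$. Concretely $B_n/A_n = \Gamma\!\big(\tfrac{3n+1}{2n}\big)\Gamma\!\big(\tfrac{2n+1}{2n}\big)\big/\big(\Gamma\!\big(\tfrac{n+1}{2n}\big)\Gamma\!\big(\tfrac{4n+1}{2n}\big)\big)$, and applying $\Gamma(x+1)=x\Gamma(x)$ to peel off $\tfrac{n+1}{2n}$ from the top of the last factor and $\tfrac{1}{2n}$... — the upshot is a closed ratio like $\dfrac{(n+1)}{(3n+1)}\cdot(\text{something})$ whose value one verifies exceeds $\tfrac12$ for every integer $n\ge 1$.

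An alternative, more robust route that avoids Gamma-function identities entirely: integrate by parts or compare directly. Write $I_n = \int_0^1 x^n(1-2x^{2n})(1-x^{2n})^{-1/2}\,dx$ and note $x^n(1-2x^{2n}) = \tfrac{d}{dx}\big[\tfrac{1}{n+1}x^{n+1} - \tfrac{2}{3n+1}x^{3n+1}\big]$; integrating by parts against $(1-x^{2n})^{-1/2}$ converts the integrand into something manifestly signed, since $\tfrac{d}{dx}(1-x^{2n})^{-1/2} = n x^{2n-1}(1-x^{2n})^{-3/2} \ge 0$ and the boundary terms are controlled. Yet another option is the substitution $x^n = \sin\phi$ (valid since $x^n$ is increasing on $[0,1]$), under which $1-x^{2n} = \cos^2\phi$ and the integral becomes $\tfrac{1}{n}\int_0^{\pi/2} (1-2\sin^2\phi)\sin^{1/n}\phi\,d\phi = \tfrac1n\int_0^{\pi/2}\cos(2\phi)\sin^{1/n}\phi\,d\phi$, whose negativity follows because $\sin^{1/n}\phi$ is increasing while $\cos 2\phi$ changes sign from $+$ to $-$ at $\phi=\pi/4$, so the negative lobe is weighted more heavily — a clean one-line comparison argument. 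I would present this last version in the paper.

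\textbf{Main obstacle.} None of the steps is deep; the only thing to be careful about is justifying the weighting/comparison inequality rigorously and uniformly in $n$ (for the Beta-function route, checking $B_n/A_n > 1/2$ for all $n\ge 1$ rather than just asymptotically; for the trigonometric route, making the "negative lobe dominates" argument airtight, e.g.\ by splitting $\int_0^{\pi/2} = \int_0^{\pi/4} + \int_{\pi/4}^{\pi/2}$, substituting $\phi \mapsto \pi/2 - \phi$ in one piece, and using monotonicity of $\sin^{1/n}$). That bookkeeping is the entirety of the work.
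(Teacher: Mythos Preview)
Your proposal is correct: the trigonometric route you favor (substitute $x^n=\sin\phi$, obtain $\frac{1}{n}\int_0^{\pi/2}\cos(2\phi)\sin^{1/n}\phi\,d\phi$, split at $\pi/4$, reflect $\phi\mapsto\pi/2-\phi$, and use $\sin^{1/n}\phi<\cos^{1/n}\phi$ on $(0,\pi/4)$) gives a rigorous proof of strict negativity for every $n\ge 1$. The Beta-function route would also work but is unnecessarily heavy.

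The paper's proof is different and shorter. It never passes through the explicit form $x^n/\sqrt{1-x^{2n}}$; instead it exploits the Euler-type relation $-xF_h'(x)=2n\,(1-F_h(x))$ (the paper writes $2n-1$, a harmless typo since only positivity of the constant matters) to rewrite
\[
\Theta_2(F_h,[0,1])=\frac{-2}{2n}\int_0^1 \frac{x\,F_h'(x)F_h(x)}{\sqrt{1-F_h^2(x)}}\,dx,
\]
and then integrates by parts once, recognizing $\dfrac{F_h'F_h}{\sqrt{1-F_h^2}}=-\dfrac{d}{dx}\sqrt{1-F_h^2}$. The boundary term $x\sqrt{1-F_h^2(x)}\big|_0^1$ vanishes, leaving $\Theta_2=-\frac{2}{2n}\int_0^1\sqrt{1-F_h^2(x)}\,dx<0$. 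This is closer in spirit to the integration-by-parts option you mentioned but dismissed; the key observation you were missing there is the homogeneity identity for $1-F_h$, which makes the parts computation a one-liner. Your symmetry argument, by contrast, is more hands-on but equally elementary and has the minor advantage of not relying on the special algebraic form of $F_h$ beyond the explicit substitution.
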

\begin{proof}
By construction, $-x F'_h(x) = (2n-1)(1-F_h(x))$. Using integration by parts it follows that
\begin{equation*}
\begin{split}
\Theta_2(F_h,[0,1]) & = \frac{-2}{2n-1} \int_{[0,1]} \frac{xF'_h(x)F(x)dx}{\sqrt{1-F^2_h(x)}} \\
                              &  = \frac{2}{2n-1}  x\sqrt{1-F^2_h(x)}|_0^1 -  \frac{2}{2n-1} \int_{[0,1]} \sqrt{1-F_h^2(x)} dx.
\end{split}
\end{equation*} 
The desired result follows by $ x\sqrt{1-F^2_h(x)}|_0^1 = 0$.
\end{proof}

\begin{corollary}\label{cor:h-The}
The set of all the homoclinic $\R^3_{F_{h}}$-geodesics is given by
$$ Pen_{h}  := Pen_{h}^+ \cup Pen_{h}^- ,$$
where
\begin{equation*}
\begin{split}
Pen_{h}^+ &  := \{ (a,b) = (s,1-s)  : \;\; s  \in (1,\infty) \}  \\
Pen_{h}^- &  := \{ (a,b) = (-s,s-1)  : \;\; s  \in (1,\infty) \}.
\end{split}
\end{equation*}
Moreover, the map $\Theta_2 (G,[0,1]) : Pen_{h}^+ \to \R$ is one to one and the cost map $Cost(c_{h},[t_0,t_1])$ is bounded by $\Theta_1(F_h,[0,1]) := \Theta_h$ for all $[t_0,t_1]$.      
\end{corollary}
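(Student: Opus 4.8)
The plan is to mirror, step by step, the structure of the proof of Corollary \ref{cor:d-t-The}, making the few adjustments forced by the homoclinic (as opposed to heteroclinic direct-type) nature of the geodesics. First I would identify which pencils $(a,b) \in \R^2$ give rise to homoclinic $\R^3_{F_h}$-geodesics with hill interval $[0,1]$. Recall $F_h(x) = 1 - 2x^{2n}$, so $F_h(0) = 1$ with $F_h'(0)=0$ (a critical point) and $F_h(1) = -1$ with $F_h'(1) \neq 0$ (a regular point). For $G(x) = a + bF_h(x)$ we need $|G(x)|<1$ on $(0,1)$ and $|G(x)|=1$ at the endpoints, with exactly one endpoint critical; $x=0$ is automatically a critical point of $G$, and $G(0) = a+b$. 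Requiring $G(0) = \pm 1$ and $G(1) = a-b$ to have absolute value $1$ but with $x=1$ regular forces, after using the reflection $R_{\theta_0}$ to reduce to $G(x(t))\to 1$, the normalization $a + b = 1$, i.e. $(a,b) = (s, 1-s)$; the constraint $|G(x)|<1$ on the open interval together with $G'(0)=0$ being the only critical endpoint then pins down $s \in (1,\infty)$ (one checks $G(x) = s + (1-s)(1-2x^{2n}) = 1 + 2(s-1)x^{2n}$… wait, this has $G(1) = 2s-1 > 1$, so in fact one must take the branch with $G(0) = 1$ coming from $b<0$, giving $G(x) = 1 - |b|(1-F_h(x))$-type behavior; the sign bookkeeping here is exactly analogous to Corollary \ref{cor:d-t-F} and I would carry it through carefully). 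This yields $Pen_h^+$ and, via $R_{\theta_0}$, $Pen_h^-$.

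Next, for the injectivity of $\Theta_2(\cdot,[0,1])$ on $Pen_h^+$, I would reuse the computation from the proof of Corollary \ref{cor:d-t-The} verbatim: parametrize $G_s(x) = s + (1-s)F_h(x)$, differentiate under the integral sign,
\begin{equation*}
\frac{d}{ds}\Theta_2(G_s,[0,1]) = \int_{[0,1]} \frac{1 - F_h(x)}{(1-G_s^2(x))^{3/2}}\,dx,
\end{equation*}
and observe that $1 - F_h(x) = 2x^{2n} > 0$ on $(0,1)$, so the derivative is strictly positive and $\Theta_2$ is strictly monotincreasing in $s$, hence one-to-one. The boundedness of $Cost(c_h,[t_0,t_1])$ by $\Theta_h := \Theta_1(F_h,[0,1])$ follows exactly as before: from Corollary \ref{prop:mag-geo-Delta} and $|F_h(x)| \le 1$ on $[0,1]$ one gets $|Cost_y| < Cost_t < 2\int_{[0,1]}\sqrt{(1-F_h)/(1+F_h)}\,dx = \Theta_1(F_h,[0,1])$; here I must check that $\Theta_h$ is actually finite, which requires $x=1$ not be a critical point of $F_h$ with value $-1$ — it is regular, so $\Theta_h < \infty$. (Lemma \ref{lem:h-The} is not logically needed for this corollary but confirms the sign of $\Theta_2$ and will be used downstream.)

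The main obstacle I anticipate is the first step — correctly determining the parameter range $s \in (1,\infty)$ and the sign conventions that place the homoclinic geodesics in $Pen_h^\pm$. Unlike the direct-type case, where both endpoints of the hill interval are critical and the family of admissible pencils is cut out by the single inequality involving $q_{max}$, here the asymmetry between the critical endpoint $x=0$ and the regular endpoint $x=1$ means one has to track which endpoint of the new hill interval of $G$ is critical as $s$ varies, and verify that for every $s>1$ the polynomial $G_s$ (or its $R_{\theta_0}$-reflection) indeed has $[0,1]$ as a hill interval with the homoclinic dynamical type, while for $s \le 1$ it does not. Once that combinatorial/sign analysis is settled, the monotonicity and boundedness arguments are routine transcriptions of the direct-type proof.
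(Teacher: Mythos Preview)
Your plan to transcribe the differentiation argument from Corollary~\ref{cor:d-t-The} verbatim does not go through, and the obstruction is not the sign bookkeeping you flag at the end. In the direct-type case the hill interval of $G_s(x)=s+(1-s)F_d(x)$ is $[0,1]$ for every $s$, because $F_d(0)=F_d(1)=1$ forces $G_s(0)=G_s(1)=1$; that is what makes the derivative $\frac{d}{ds}\int_{[0,1]}(\cdots)\,dx$ a legitimate differentiation under a fixed integral. In the homoclinic case $F_h(1)=-1$, so $G_s(1)=2s-1$ and $[0,1]$ is \emph{not} the hill interval of $G_s$ once $s\neq 1$. The correct domain is $I_s=[0,x_1(s)]$ with $x_1(s)$ moving, and at that moving endpoint $G_s(x_1(s))=-1$ while $1-F_h(x_1(s))>0$, so the integrand of $\Theta_2$ has a genuine $(1-G_s^2)^{-1/2}$ singularity there. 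Leibniz's rule then produces a divergent boundary term, and your formula for $\frac{d}{ds}\Theta_2$ is not justified.

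The paper avoids this by exploiting the self-similarity of the monomial $F_h(x)=1-2x^{2n}$: the change of variable $x=\lambda\tilde x$ with $\lambda=x_1(s)$ sends $G_s$ on $I_s$ back to $F_h$ on $[0,1]$, and one reads off
\[
\Theta_2(G_s,I_s)=\lambda^{\,2n+1}\,\Theta_2(F_h,[0,1]).
\]
Monotonicity in $s$ then follows because $\lambda$ is monotone in $s$ and $\Theta_2(F_h,[0,1])$ has a definite sign. This is exactly where Lemma~\ref{lem:h-The} enters --- it is not a side remark but the hinge of the injectivity argument --- so your claim that it ``is not logically needed for this corollary'' is mistaken. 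The boundedness of $Cost$ by $\Theta_h$ does go through as you describe.
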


\begin{proof}
The proof's first part is the same as the one from \ref{cor:d-t-The}. To prove that $ \Theta_1 (a,b) : Pen_{h}^+ \to \R$ is one to one, we consider the one-parameter family of homoclinic polynomial $G_s(x) := s - (1-s)F_{h}(x)$ with hill interval $[0,\sqrt[2n]{\frac{1}{s}}]$. Thus, $\Theta_1 (G_s,[0,\sqrt[2n]{\frac{1}{s}}]) : (0,\infty) \to \R$ is a one variable function and it is enough to show it is a monotone increasing function. Let us set up the change of variable $x =  \sqrt[2n]{\frac{1}{s}} \tilde{x}$, then $F(\tilde{x}) = 1-2\tilde{x}^{2n} = F_h(\tilde{x})$ and
\begin{equation*}
\begin{split}
\Theta_2 (G_s,[0,\sqrt[2n]{\frac{1}{s}}]) & =    \int_{[0,\sqrt[2n]{\frac{1}{s}}]} \frac{2x^{2n}G_s(x)}{\sqrt{1-G_s^2(x)}}dx  = (\sqrt[2n]{\frac{1}{s}})^{n+1} \Theta_2(F_h,[0,1]). \\
\end{split}
\end{equation*}
Since $\frac{1}{s}$ is monotone decreasing and $\Theta_2(F_h,[0,1])$ is negative.
Then, we conclude $\Theta_2 (G_s,[0,\sqrt[2n]{\frac{1}{s}}])$ is a monotone increasing function with respect to $s$.
\end{proof}

We remark that $Pen_{h}^+$ defines the homoclinic geodesics such that 
$$\lim_{t \to \infty}y(t) = \infty\;\; \text{and} \;\; \lim_{t \to -\infty}y(t) = -\infty,$$
 while $Pen_{h}^-$ defines the homoclinic geodesics such that 
 $$\lim_{t \to \infty}y(t) = -\infty\;\; \text{and} \;\; \lim_{t \to -\infty}y(t) = \infty.$$

\begin{corollary}\label{cor:h-T-start}
There exist  $T^*_{h}>0$ such that $y_{h}(t) > 0$ if $T_h^* < t$ and $y_{h}(t) < 0$ if $-T_h^* > t$. Moreover, $Cost_y(c_h,[-t,t]) < 0$ if $T_h^* < t$.
\end{corollary}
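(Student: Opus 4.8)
The plan is to establish Corollary \ref{cor:h-T-start} directly from the sign and asymptotic properties of the homoclinic geodesic $c_h(t)$ and from the estimates already packaged in Lemma \ref{lem:h-The} and Corollary \ref{cor:h-The}. Recall the normalization $F_h(x) = 1-2x^{2n}$, hill interval $[0,1]$, and $c_h(0) = (1,0,0)$, and that the pencil parameter for $c_h$ is $(a,b)=(0,1)$, so $G = F_h \in Pen_h^+$ in the notation of Corollary \ref{cor:h-The}. Thus by the remark following that corollary, $\lim_{t\to\infty} y_h(t) = +\infty$ and $\lim_{t\to -\infty} y_h(t) = -\infty$.

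First I would produce $T^*_h$. By the reduced dynamics, $\dot y_h = F_h(x_h(t))$, and the reduced orbit is homoclinic to the critical point $x=0$ (where $F_h=1$), spending most of its time near $x=0$ where $F_h \approx 1 > 0$; more importantly, since $x_h(t)\to 0$ as $t\to\pm\infty$ and $F_h$ is continuous with $F_h(0)=1$, there is a $T_0$ with $\dot y_h(t) = F_h(x_h(t)) > 1/2 > 0$ for $|t|\ge T_0$. Hence $y_h$ is eventually strictly increasing in both tails. Combined with $y_h(t)\to\pm\infty$ as $t\to\pm\infty$, this gives a threshold $T^*_h \ge T_0$ such that $y_h(t)>0$ for $t>T^*_h$ and $y_h(t)<0$ for $t<-T^*_h$; one takes $T^*_h$ to be, say, the last zero of $y_h$, which exists and is finite because $y_h(0)=0$ and $y_h$ is eventually monotone with infinite limits. (Using the $R_{\theta_0}$-type symmetry / time-reversibility of $H_F$ noted just above Lemma \ref{lem:h-The}, one even gets $y_h(-t) = -y_h(t)$, so a single threshold works on both sides.)

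For the second claim, $Cost_y(c_h,[-t,t]) < 0$ for $t > T^*_h$, I would rewrite $Cost_y$ via Corollary \ref{prop:mag-geo-Delta}: with travel interval $\mathcal I_t := \mathcal I[-t,t]$ we have
\begin{equation*}
Cost_y(c_h,[-t,t]) = \int_{\mathcal I_t} \frac{(1-F_h(x))F_h(x)}{\sqrt{1-F_h^2(x)}}\,dx = \int_{\mathcal I_t} F_h(x)\sqrt{\frac{1-F_h(x)}{1+F_h(x)}}\,dx .
\end{equation*}
By time-reversibility the geodesic goes out from $x=1$ toward $x=0$ (or the endpoint) and back, so $\mathcal I_t$ is symmetric (covered with multiplicity two) and as $t\to\infty$ it fills $[0,1]$; in the limit the integral becomes $\Theta_2(F_h,[0,1])$, which is strictly negative by Lemma \ref{lem:h-The}. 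The remaining point is to upgrade the limiting negativity to negativity for all finite $t>T^*_h$. Here I would use the monotonicity structure already exploited in Corollary \ref{cor:h-The}: the integrand $F_h(x)\sqrt{(1-F_h)/(1+F_h)}$ is negative exactly where $F_h(x)<0$, i.e. on $x > (1/2)^{1/2n}$, and the integration-by-parts identity $-xF_h'(x) = (2n-1)(1-F_h(x))$ used in Lemma \ref{lem:h-The} applies verbatim on any sub-interval $[0,\xi]$, giving
\begin{equation*}
\int_{[0,\xi]} F_h(x)\sqrt{\frac{1-F_h(x)}{1+F_h(x)}}\,dx = \frac{1}{2n-1}\,\xi\sqrt{1-F_h^2(\xi)} - \frac{1}{2n-1}\int_{[0,\xi]}\sqrt{1-F_h^2(x)}\,dx .
\end{equation*}
Since $\mathcal I_t = [0,\xi_t]$ with multiplicity two (where $\xi_t = \max x_h([-t,t]) \to 1$), $Cost_y(c_h,[-t,t])$ equals $\tfrac{2}{2n-1}\big(\xi_t\sqrt{1-F_h^2(\xi_t)} - \int_0^{\xi_t}\sqrt{1-F_h^2}\big)$, and one checks this is negative for all $\xi_t$ in the relevant range — e.g. by differentiating in $\xi$ and using $F_h' < 0$, or by the elementary bound $\xi\sqrt{1-F_h^2(\xi)} < \int_0^\xi \sqrt{1-F_h^2(x)}\,dx$ that holds because $\sqrt{1-F_h^2(x)}$ is increasing on $(0,\xi)$ hence its average exceeds its left-endpoint-weighted value; one must only confirm $\xi_t > T^*_h$-threshold translates to $\xi_t$ large enough, which follows since $\dot y_h>0$ forces $x_h$ to have already passed the zero of $F_h$. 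The main obstacle I anticipate is precisely this last step: translating the time threshold $T^*_h$ into a statement about the geometry of the travel interval and verifying the one-variable inequality for all finite truncations rather than only in the limit; everything else is bookkeeping with the identities and limits already in hand.
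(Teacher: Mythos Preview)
Your treatment of the first assertion is fine and is essentially what the paper does (it refers back to Corollary~\ref{cor:d-t-T-start}, which simply invokes $y_h(t)\to\pm\infty$). For the second assertion, however, you are working much harder than necessary and introduce errors along the way. The paper's argument is one line: $Cost_y(c_h,[-t,t]) \to \Theta_2(F_h,[0,1])$ as $t\to\infty$, and $\Theta_2(F_h,[0,1])<0$ by Lemma~\ref{lem:h-The}, so one simply enlarges $T^*_h$ until $Cost_y(c_h,[-t,t])<0$ holds for all $t>T^*_h$. Nothing in the statement forces $T^*_h$ to coincide with the last zero of $y_h$, so there is no ``translation'' step to worry about.

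Your truncation argument, as written, fails at two concrete points. First, the travel interval is reversed: with $x_h(0)=1$ the regular turning point and $x_h(\pm t)\to 0$ the homoclinic limit, one has $\mathcal I[-t,t]=2[\xi_t,1]$ with $\xi_t:=x_h(t)\to 0$, not $2[0,\xi_t]$ with $\xi_t\to 1$. Second, your ``elementary bound'' $\xi\sqrt{1-F_h^2(\xi)} < \int_0^\xi \sqrt{1-F_h^2}$ is false for small $\xi$ (both sides are of order $\xi^{n+1}$ with the left side carrying the larger constant), and an increasing integrand would in any case yield the reverse inequality $\int_0^\xi g\le \xi g(\xi)$. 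It is worth noting that once you integrate by parts over the correct interval $[\xi_t,1]$, the boundary contribution at $x=1$ vanishes and the two remaining terms are both nonpositive, so in fact $Cost_y(c_h,[-t,t])<0$ for every $t>0$ with no auxiliary inequality needed --- but the paper neither needs nor proves this.
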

\begin{proof}
Since $Cost_y(c_h,[-t,t]) \to \Theta_2(F_h,[0,1])$ as $t \to \infty$ and $\Theta_2(F_h,[0,1]) < 0$, we can find the desired  $T^*_{h}$. The rest of the proof is equal to Corollary \ref{cor:d-t-T-start}.
\end{proof}

\subsection{Set Up The Proof Of Theorem \ref{th1:h-me-lin}}\label{sub-sec:proof-theo-2}

Let $T$ be arbitrarily large and consider the sequence of points $c_{h}(-n)$ and $c_{h}(n)$ where $T <n$ and $n$ is in $\mathbb{N}$. Let $c_n(t) = (x_n(t),y_n(t),z_n(t))$ be a sequence of minimizing $\R^3_{F_{h}}$-geodesics in the interval $[0,T_n]$ such that:
\begin{equation}\label{eq:h-end-mat-con}
c_n(0) = c_{h}(-n),\;\;  c_n(T_n) = c_{h}(n) \;\; \text{and} \;\; T_n \leq n.
\end{equation}
We call the equations and inequality from \eqref{eq:h-end-mat-con} the endpoint conditions and the shorter condition, respectively. If $c_n(t)$ is geodesic for the polynomial $G_n(x)$ and a hill interval $I_n$, then Proposition \ref{Maxwell} implies $T_n \leq L(G_n,I_n)$. Since the endpoint condition holds for all $n$, the sequence $c_{n}(t)$ has the asymptotic conditions:
\begin{equation}\label{eq:h-asy-cond}
\begin{split}
\lim_{n\to\infty} c_{n}(0) & = (0,-\infty,-\infty), \;\; \lim_{n\to\infty} c_{n}(T_n) = (0,\infty,\infty), \\
\end{split}
\end{equation} 
and the asymptotic period condition
\begin{equation}\label{eq:h-period-asy-cond}
\begin{split}
\lim_{n \to \infty} Cost_y(c_n,[0,T_n] ) & = \Theta_{2}(F_h,[0,1]). \\
\end{split}
\end{equation}

The following Corollary tells us $c_n(t)$ is not a sequence of line geodesics. We remark that applying the calibration function found in \cite[Section 5]{RM-ABD} is only possible for every sub-interval of the time intervals $(-\infty,0)$ or $(0,\infty)$, in other words the calibration method does not work on an interval containing the time $t=0$, which correspond to the point when the $x$ coordinate bounce on the point $x=1$, for more details see \cite[Section 5]{RM-ABD}.
\begin{corollary}\label{cor:h-non-line-geo}
Let $n$ be larger than $T^*_h$, where $T^*_h$ is given by Corollary \ref{cor:h-T-start}, then the sequence of geodesics $c_n(t)$ neither is a sequence of geodesic lines, nor converge to a geodesics line. In particular, $c_n(t)$ does not converge to an abnormal geodesic.
\end{corollary}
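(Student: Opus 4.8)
The plan is to mirror the argument given for Corollary \ref{cor:non-line-geo} in the heteroclinic case, adapting it to the homoclinic geometry where both tails of $c_h(t)$ have travel interval accumulating at the full hill interval $[0,1]$ and the $x$-coordinate bounces at the endpoint $x=1$. First I would record the consequence of the asymptotic conditions \eqref{eq:h-asy-cond}: since $c_n(0)\to(0,-\infty,-\infty)$ and $c_n(T_n)\to(0,\infty,\infty)$, the geodesic $c_n(t)$ must carry its $y$-coordinate from near $-\infty$ to near $+\infty$, so for $n$ large its $y$-increment $\Delta y(c_n,[0,T_n])$ is arbitrarily large and in particular positive and nonzero.

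\textbf{Step 1: $c_n(t)$ is not a geodesic line.} Here I would invoke Lemma \ref{lem:d-t-cal} applied to the homoclinic polynomial $F_h$ (the calibration function of \cite[Section 5]{RM-ABD} is defined on $\Omega(F_h)=hill(F_h)\times\R^2$, and as noted in the statement it calibrates $c_h$ on every sub-interval of $(-\infty,0)$ and of $(0,\infty)$). If $c_n(t)$ were a line geodesic — i.e.\ corresponded to a constant polynomial $G_n$ — it would be forced to stay in the region $\Omega(F_h)$, and there it could not be strictly shorter than $c_h(t)$ restricted to the matching sub-intervals; combined with the shorter condition $T_n\le n$ and $T_n\le L(G_n,I_n)$ from Proposition \ref{Maxwell} this yields a contradiction, exactly as in Corollary \ref{cor:non-line-geo}. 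The only subtlety relative to the heteroclinic case is the bounce at $t$ corresponding to $x=1$, which is why one splits into the two one-sided intervals before applying the calibration; I would spell this splitting out carefully.

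\textbf{Step 2: $c_n(t)$ does not converge to a line geodesic, nor to an abnormal geodesic.} Let $\mathcal{I}_n$ be the travel interval of $c_n(t)$. Since the reduced dynamics of $c_h$ is homoclinic with hill interval $[0,1]$ and the endpoints $c_n(0),c_n(T_n)$ have $x$-coordinate tending to $0$ while $c_h$ itself sweeps out all of $[0,1]$, one shows $\lim_{n\to\infty}\mathcal{I}_n=[0,1]$, a nondegenerate interval. A geodesic line in the plane $(x,y)$ whose $y$-coordinate runs from $-\infty$ to $+\infty$ over a bounded $x$-range must be the vertical line $x=\mathrm{const}$, whose travel interval is a single point — incompatible with $\mathcal{I}_n\to[0,1]$. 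Hence no subsequential limit of $c_n(t)$ can be a line. Finally, by Lemma \ref{lemma:abn-geo} an abnormal geodesic in $\R^3_{F_h}$ has $x(t)\equiv x^*$ with $F_h'(x^*)=0$, which again has degenerate travel interval; so $c_n(t)$ cannot converge to an abnormal geodesic either. The role of the hypothesis $n>T^*_h$ is to guarantee (via Corollary \ref{cor:h-T-start}) that $y_n$ genuinely changes sign and the "from $-\infty$ to $+\infty$" picture is in force.

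The main obstacle I anticipate is not any single step but making the bounce at $x=1$ rigorous: the calibration function only works away from the bounce time, so the estimate in Step 1 must be organized as two one-sided comparisons glued at that instant, and one must check that shortening on at least one side still produces the required strict inequality. Once that bookkeeping is in place, everything else is a direct transcription of the heteroclinic argument, replacing $F_d$ by $F_h$ and using Lemma \ref{lem:h-The} / Corollary \ref{cor:h-The} wherever the sign of $\Theta_2(F_h,[0,1])$ or the structure of $Pen_h^{\pm}$ is needed.
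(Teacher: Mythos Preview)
Your proposal has a genuine gap: neither Step~1 nor Step~2 actually uses the evenness of $F_h$, yet the statement is \emph{false} for $F_h(x)=1-2x^{2n+1}$ (this is precisely Theorem~\ref{th1:h-no-me-lin}). Any valid proof must therefore exploit the sign condition $\Theta_2(F_h,[0,1])<0$ of Lemma~\ref{lem:h-The}, and yours does not.

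Concretely, in Step~2 you assert $\lim_{n\to\infty}\mathcal{I}_n=[0,1]$, but this is not justified. In the heteroclinic case the claim followed because the two endpoints $c_d(-n),c_d(n)$ have $x$-coordinates tending to $0$ and $1$ respectively, so the travel interval is forced to contain an interval approaching $[0,1]$. In the homoclinic case both endpoints have $x$-coordinate tending to $0$, so nothing forces $c_n$ to sweep out a large interval; indeed, in the odd case the short competitors constructed in Appendix~\ref{sec:th1_:h-non-me-lin} have travel interval collapsing to $\{0\}$. Your Step~1 has the same defect: the calibration argument, even if the bounce bookkeeping could be made to work, would apply equally well to odd $F_h$ and hence cannot be correct as stated.

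The paper's argument is different and short. Since $x(-n)=x(n)$, a line geodesic joining $c_h(-n)$ to $c_h(n)$ must have $\Delta x=0$ and $\Delta y>0$ (for $n>T^*_h$), hence must be the vertical line $G_n\equiv 1$. But for $G_n\equiv 1$ one has $(1-F_h(x))G_n(x)>0$ everywhere, so $Cost_y(c_n,[0,T_n])>0$, contradicting the endpoint condition $Cost_y(c_n,[0,T_n])=Cost_y(c_h,[-n,n])<0$ from Corollary~\ref{cor:h-T-start}. The same inequality persists for $G_n>\tfrac12$, ruling out convergence to the vertical line and hence to any line or abnormal geodesic. This is exactly where the even exponent enters, and the paper notes explicitly that this step fails for $1-2x^{2n+1}$.
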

\begin{proof}
Let us assume that $c_n(t)$ is a sequence of geodesic lines. Since $\Delta x(c_h,[-t,t]) ) = 0$ for all $n$ and $\Delta y(c_h,[-t,t]) ) > 0$ for all $n> T^*_h$, the unique geodesic line satisfying these conditions is the vertical line, which is generated by the polynomial $G_n(x) = 1$. Since $1 -F_h(x) > 0$ for all $x$, then $(1 -F_h(x))G_n(x) > 0$ for all $x$ and it follows that:
$$ Cost_y (c_n,[0,T_n]) = \int_{0}^T (1 -F_h(x(t)))G_n(x(t))dt  > 0. $$
This contradicts the endpoint conditions given by \eqref{eq:h-end-mat-con} since if $T_h^* < t$ then $Cost_y(c_h,[-t,t]) < 0$ . The same proof follows if $c_n(t)$ converges to a geodesics line $c(t)$ generated by $G(x) =1$, since there exists $N$ big enough that $G_n(x) > \frac{1}{2}$ for $n > N$. 
\end{proof}

Notice that this proof cannot be done in the case $F_h(x) = 1-2x^{2n+1}$. In Appendix \ref{sec:th1_:h-non-me-lin} under the hypothesis $F_h(x) = 1-2x^{2n+1}$, we will find a sequence of curves $c_n(t)$ shorter than $c_h(t)$ that converges to the abnormal geodesic.

The following Proposition provides the bounded initial condition.
\begin{proposition}
Let $n$ be a natural number larger than  $T^*_h$, where $T^*_h$ is given by Corollary \ref{cor:h-T-start}, and let $K_0 = K_{x} \times [-1,1] \times [-C_h,C_h]$ be a compact set, where $K_{x}$ is the compact set from Lemma  \ref{lem:d-t-bounded-trav-interval} and $C_{h}$ is constant provided by Corollary \ref{cor:h-The}. Then there exist a time $t_n^* \in (0,T_n)$ such that $c_n(t_n^*)$ is in $K_0$ for all $n > T^*_1$.
\end{proposition}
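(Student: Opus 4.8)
The statement to prove is the homoclinic analog of Proposition~\ref{prp:d-r-ini-con-comp}: given the sequence of minimizing $\R^3_{F_h}$-geodesics $c_n(t)$ defined by the endpoint conditions \eqref{eq:h-end-mat-con}, one must locate a time $t_n^* \in (0,T_n)$ at which $c_n$ visits the fixed compact box $K_0 = K_x \times [-1,1] \times [-C_h,C_h]$. The plan is to follow the same three-coordinate argument used in the direct-type case, coordinate by coordinate.

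\emph{Step 1 (the $y$-coordinate).}
First I would invoke Corollary~\ref{cor:h-T-start} to pin down the sign behavior of $y_h$: for $n > T_h^*$ the endpoint conditions force $y_n(0) = y_h(-n) < 0$ and $y_n(T_n) = y_h(n) > 0$. By the intermediate value theorem applied to the continuous function $y_n$, there is a $t_n^* \in (0,T_n)$ with $y_n(t_n^*) = 0$; in particular $y_n(t_n^*) \in [-1,1]$.

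\emph{Step 2 (the $x$-coordinate).}
Next I would control the travel interval. Corollary~\ref{cor:h-The} gives that $Cost(c_n,[0,T_n])$ is uniformly bounded by $\Theta_h$ (this uses $|F_h(x)|\le 1$ on the hill interval exactly as in the proof of Corollary~\ref{cor:d-t-The}), and each pair $(c_n,[0,T_n])$ lies in $Com([0,1])$ since the endpoints have $x$-coordinate in $\{0,1\}\subset[0,1]$. Therefore Lemma~\ref{lem:d-t-bounded-trav-interval} yields a compact $K_x\subset\R$ with $\mathcal{I}_n[0,T_n]\subset K_x$ for all $n$, hence $x_n(t)\in K_x$ for every $t\in[0,T_n]$, and in particular $x_n(t_n^*)\in K_x$.

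\emph{Step 3 (the $z$-coordinate).}
This is the step requiring the most care, and it mirrors the $|z_n(t_n^*)|\le\Theta_d$ computation in Proposition~\ref{prp:d-r-ini-con-comp}. The endpoint conditions give $\Delta y(c_h,[-n,n]) = \Delta y(c_n,[0,T_n])$ and $\Delta z(c_h,[-n,n]) = \Delta z(c_n,[0,T_n])$, hence $Cost_y(c_h,[-n,n]) = Cost_y(c_n,[0,T_n])$. I would then normalize using $c_h(0) = (1,0,0)$ so that $z_h(0) = 0$, and $z_n(0) = z_h(-n)$ from the endpoint matching. Writing $z_n(t_n^*) - z_n(0) = \Delta z(c_n,[0,t_n^*]) = \Delta y(c_n,[0,t_n^*]) - Cost_y(c_n,[0,t_n^*])$ and $z_h(0) - z_h(-n) = \Delta z(c_h,[-n,0]) = \Delta y(c_h,[-n,0]) - Cost_y(c_h,[-n,0])$, and using that by construction $\Delta y(c_n,[0,t_n^*]) = \Delta y(c_h,[-n,0])$ (this is precisely where the choice $y_n(t_n^*)=0=y_h(-n)\cdot 0$, i.e.\ $y_n(t_n^*)=y_h(0)$ in the normalized picture, is used — the $y$-displacement from $y=$ (value at start) to $y=0$ matches), subtraction gives
\[
|z_n(t_n^*)| = |Cost_y(c_n,[0,t_n^*]) - Cost_y(c_h,[-n,0])| \le 2\Theta_h =: C_h,
\]
the bound coming from the fact that both $Cost_y$ terms are individually bounded by $\Theta_h$ by Corollary~\ref{cor:h-The}. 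Combining Steps 1--3 shows $c_n(t_n^*)\in K_x\times[-1,1]\times[-C_h,C_h] = K_0$, as required.

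\emph{Main obstacle.}
The delicate point is Step 3: one must verify that the displacement identity $\Delta y(c_n,[0,t_n^*]) = \Delta y(c_h,[-n,0])$ genuinely holds after the chosen normalizations, i.e.\ that the intermediate-value time $t_n^*$ — selected purely to zero out $y_n$ — is exactly the right time to make the $y$-displacements of $c_n$ and of $c_h$ agree on the ``first halves.'' This hinges on $c_h$ being normalized with $y_h(0)=0$ and $c_h(0)=(1,0,0)$ (the bounce point), which is legitimate by the $y$ and $z$ translation invariance noted at the start of this subsection. Everything else is a routine transcription of the direct-type argument; the homoclinic feature ($x_n$ returning near the bounce point $x=1$ rather than running between two distinct critical points) does not affect this particular Proposition, since it only concerns boundedness of a single interpolating point, not the convergence of the polynomials $G_n$.
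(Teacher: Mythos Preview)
Your proposal is correct and follows exactly the same three-step argument as the paper, which simply says ``Same proof as Proposition~\ref{prp:d-r-ini-con-comp}'': intermediate value theorem for the $y$-coordinate, Lemma~\ref{lem:d-t-bounded-trav-interval} for the $x$-coordinate, and the $Cost_y$ subtraction trick for the $z$-coordinate. The ``main obstacle'' you flag is not actually an obstacle---the identity $\Delta y(c_n,[0,t_n^*]) = \Delta y(c_h,[-n,0])$ is immediate from $y_n(0)=y_h(-n)$ and $y_n(t_n^*)=0=y_h(0)$---and your bound $2\Theta_h$ versus the paper's $\Theta_h$ is a harmless discrepancy already present in the direct-type proof.
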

Same proof as Proposition \ref{prp:d-r-ini-con-comp}.

Let us reparametrize the sequence of minimizing $\R^3_{F_{h}}$-geodesics $c_{n}(t)$. Let $\tilde{c}_{n}(t)$ be a minimizing $\R^3_{F_{h}}$-geodesic in the interval $\mathcal{T}_n:=[-t^*_n,T_n-t^*_n]$ given by $\tilde{c}_{n}(t) := c_{n}(t + t^*_n)$. Then, $\tilde{c}_{n}(0)$ is bounded and  $\tilde{c}_{n}(t)$ is a minimizing $\R^3_{F_{d}}$-geodesics in the interval $\mathcal{T}_n$.  
\begin{corollary}\label{cor:h-sub-seq-inter}
There exists a subsequence $\mathcal{T}_{n_j}$ such that $\mathcal{T}_{n_j} \subset \mathcal{T}_{n_{j+1}}$.
\end{corollary}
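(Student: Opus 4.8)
The plan is to follow the proof of Corollary \ref{cor:d-t-sub-seq-inter} closely: I will show that the left endpoints of the intervals $\mathcal{T}_n = [-t^*_n, T_n - t^*_n]$ tend to $-\infty$ and the right endpoints tend to $+\infty$, and then extract a nested subsequence by a routine inductive choice.

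First I would record that $\tilde{c}_n(t) = c_n(t + t^*_n)$ is parametrized by arc length, so its $y$-coordinate is $1$-Lipschitz in $t$. By the construction of $t^*_n$ in the preceding Proposition (the intermediate value argument, exactly as in the proof of Proposition \ref{prp:d-r-ini-con-comp}), the point $\tilde{c}_n(0) = c_n(t^*_n)$ lies in the fixed compact set $K_0$, and in particular has vanishing $y$-coordinate, whereas $\tilde{c}_n(-t^*_n) = c_n(0) = c_h(-n)$ and $\tilde{c}_n(T_n - t^*_n) = c_n(T_n) = c_h(n)$. The Lipschitz bound then yields $t^*_n \ge |y_h(-n)|$ and $T_n - t^*_n \ge |y_h(n)|$.

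Next I would invoke the asymptotic conditions \eqref{eq:h-asy-cond}, which give $|y_h(\pm n)| \to \infty$; hence $-t^*_n \to -\infty$ and $T_n - t^*_n \to +\infty$. Consequently, for any already-chosen index $n_j$ the interval $\mathcal{T}_{n_j}$ has finite endpoints, so all but finitely many of the $\mathcal{T}_n$ contain it, and I may pick $n_{j+1} > n_j$ with $-t^*_{n_{j+1}} < -t^*_{n_j}$ and $T_{n_{j+1}} - t^*_{n_{j+1}} > T_{n_j} - t^*_{n_j}$, i.e. $\mathcal{T}_{n_j} \subset \mathcal{T}_{n_{j+1}}$. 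Iterating from any $n_1 > T^*_h$ produces the desired nested subsequence. There is essentially no obstacle here: the entire content is the arc-length Lipschitz estimate together with the divergence of $|y_h(\pm n)|$, both already available, so the argument is the same as for Corollary \ref{cor:d-t-sub-seq-inter}.
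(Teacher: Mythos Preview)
Your proposal is correct and follows essentially the same approach as the paper: the paper's proof of Corollary \ref{cor:d-t-sub-seq-inter} (to which it refers back here) simply observes that $\tilde{c}_n(0)$ is bounded while the endpoints $\tilde{c}_n(-t^*_n)$ and $\tilde{c}_n(T_n-t^*_n)$ are unbounded, forcing $[-t^*_n,T_n-t^*_n]\to[-\infty,\infty]$ and allowing the nested subsequence to be extracted. Your arc-length Lipschitz estimate on the $y$-coordinate is just the explicit mechanism behind that implication, so the argument is the same with the details filled in.
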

The proof of Corollary \ref{cor:h-sub-seq-inter} is equal as the of Corollary \ref{cor:d-t-sub-seq-inter}. For simplicity, we will use the notation $\mathcal{T}_{n}$ for the subsequence $T_{n_j}$.
\begin{lemma}\label{prp:h-convergent-sequece}
There exist compact set $K_N \subset \R^{3}_F$ such that if $n > N$ then $c_{n}(t)$ is in $Min(K_N,\mathcal{T}_N)$.
\end{lemma}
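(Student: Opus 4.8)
The plan is to reproduce, almost verbatim, the proof of Lemma \ref{prp:d-t-convergent-sequece}: every ingredient used there has just been re-established in the homoclinic setting. Throughout, take $N$ to be a natural number with $N \ge T^*_h$, $T^*_h$ as in Corollary \ref{cor:h-T-start}, so that the reparametrized geodesics $\tilde c_n(t) := c_n(t + t^*_n)$ are defined for every $n > N$; here $t^*_n \in (0,T_n)$ is the time produced by the homoclinic analog of Proposition \ref{prp:d-r-ini-con-comp}, which guarantees $c_n(t^*_n) \in K_0$ with $K_0 = K_x \times [-1,1] \times [-C_h,C_h]$.

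First I would record the inheritance of minimality. By Corollary \ref{cor:h-sub-seq-inter} the reparametrized intervals are nested, $\mathcal{T}_N \subset \mathcal{T}_n$ whenever $n > N$. Since $\tilde c_n$ is a minimizing $\R^3_{F_h}$-geodesic on all of $\mathcal{T}_n$, and the restriction of a minimizing geodesic to a subinterval is again minimizing, $\tilde c_n$ is minimizing on $\mathcal{T}_N$ for every $n > N$.

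Next I would produce the compact set. The initial points $\tilde c_n(0) = c_n(t^*_n)$ all lie in the fixed compact box $K_0$. Because every $\R^3_{F_h}$-geodesic is parametrized by arc length, for $t \in \mathcal{T}_N$ one has $dist_{\R^3_{F_h}}(\tilde c_n(t),\tilde c_n(0)) \le |t| \le \ell_N$, where $\ell_N := \max_{t \in \mathcal{T}_N}|t| < \infty$ depends only on $N$. Hence $\tilde c_n(\mathcal{T}_N)$ is contained in the closed metric $\ell_N$-neighborhood of $K_0$, which is compact since $\R^3_{F_h}$ is a complete \sR manifold with proper distance function. Setting $K_N$ equal to this neighborhood, the last two paragraphs together say exactly that $\tilde c_n \in Min(K_N,\mathcal{T}_N)$ for all $n > N$, by the definition of $Min(\cdot,\cdot)$ in Proposition \ref{prp:seque-comp}.

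There is essentially no obstacle: the argument rests only on the arc-length parametrization and on the uniform bound for the initial conditions. The one point that genuinely requires checking — and it is exactly what one verifies when importing Proposition \ref{prp:d-r-ini-con-comp} — is that $Cost(c_n,[0,T_n])$ is uniformly bounded, so that Lemma \ref{lem:d-t-bounded-trav-interval} supplies the compact travel set $K_x$, together with the bound $|z_n(t^*_n)| \le C_h$. As in the direct-type case, both follow from the endpoint matching $Cost_y(c_h,[-n,n]) = Cost_y(c_n,[0,T_n])$ combined with the bound $Cost(c_h,\cdot) \le \Theta_h = C_h$ of Corollary \ref{cor:h-The} (which in turn rests on Lemma \ref{lem:h-The} fixing the sign of $\Theta_2(F_h,[0,1])$), the role played by $\Theta_d$ in the direct-type argument being taken here by $C_h$.
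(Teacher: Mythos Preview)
Your proposal is correct and follows essentially the same approach as the paper, which simply declares that the proof is identical to that of Lemma \ref{prp:d-t-convergent-sequece}. You have supplied more detail than the paper does---in particular, making the compactness argument explicit via arc-length and metric balls rather than the paper's one-line appeal to ``a family of smooth functions defined on the compact set $\mathcal{T}_N$''---but the structure and ingredients are the same.
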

The proof of Lemma \ref{prp:h-convergent-sequece} is equal to the of Lemma \ref{prp:d-t-convergent-sequece}.
Therefore, $\tilde{c}_{j}(t)$ has a convergent subsequence $\tilde{c}_{j_i}(t)$ converging to a $\R^3_{F_{h}}$-geodesic $c_{\infty}(t)$. Corollary \ref{cor:h-non-line-geo} implies that $c_{\infty}(t)$ is a normal $\R^3_{F_{h}}$-geodesic for a polynomial $G(x)$ in $Pen_{F_h}$.
The following Lemma tells $G(x) = F_{h}(x)$.
\begin{lemma}\label{prp:h-F-uniqueness}
$G(x) = F_{h}(x)$ is the unique polynomial in the pencil of $F_{h}(x)$ satisfying the asymptotic conditions given by \eqref{eq:h-asy-cond} and \eqref{eq:h-period-asy-cond}.
\end{lemma}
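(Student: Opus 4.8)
The plan is to mimic the argument used for Lemma \ref{prp:d-t-F-uniqueness}, exploiting the monotonicity of the period map $\Theta_2$ established in Corollary \ref{cor:h-The}. First I would note that $c_\infty(t)$, being the uniform limit of the minimizing geodesics $\tilde{c}_{n_j}(t)$, is a $\R^3_{F_h}$-geodesic corresponding to some polynomial $G(x) = a + bF_h(x)$ in $Pen_{F_h}$; by Corollary \ref{cor:h-non-line-geo} it is normal, hence genuinely of this form. The asymptotic conditions \eqref{eq:h-asy-cond} force the reduced dynamics of $c_\infty(t)$ to be homoclinic with $G(x(t)) \to 1$ as $t \to \pm\infty$: the $x$-coordinate must return to the bounce point $x = 1$ and the $y$-coordinate must run off to $\pm\infty$, which by the classification in sub-sub-Section \ref{sec:clas-geo-je} and the finiteness criterion for $\Theta_1$ is exactly the homoclinic case with limiting value $+1$ (the value $-1$ would make $\Theta_1$ infinite and contradict the bounded-cost estimate coming through Proposition \ref{Maxwell} and $T_n \le n$). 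Evaluating $G(0) = a + b = 1$ then places $(a,b)$ in $Pen_h^+$ — not $Pen_h^-$, since on $Pen_h^-$ one has $\lim_{t\to\infty} y(t) = -\infty$, opposite to \eqref{eq:h-asy-cond}.

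Next I would pin down $G$ inside the one-parameter family $Pen_h^+$. The key input is that $\Theta_2(G_s,[0,\sqrt[2n]{1/s}])$ is a strictly monotone (increasing) function of $s$, proved in Corollary \ref{cor:h-The} via the scaling identity $\Theta_2(G_s,[0,\sqrt[2n]{1/s}]) = (1/s)^{(n+1)/2n}\,\Theta_2(F_h,[0,1])$ together with $\Theta_2(F_h,[0,1]) < 0$ from Lemma \ref{lem:h-The}. Hence $s \mapsto \Theta_2$ is injective on $(1,\infty)$, so at most one member of $Pen_h^+$ can satisfy the asymptotic period condition \eqref{eq:h-period-asy-cond}. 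It remains to check that $G = F_h$, i.e. $s = 1$, actually realizes the limiting value $\Theta_2(F_h,[0,1])$ prescribed in \eqref{eq:h-period-asy-cond}; but this is immediate, since $F_h = G_1$ and the right-hand side of \eqref{eq:h-period-asy-cond} is by definition $\Theta_2(F_h,[0,1])$. Therefore $G(x) = F_h(x)$ is the unique polynomial in $Pen_{F_h}$ meeting both \eqref{eq:h-asy-cond} and \eqref{eq:h-period-asy-cond}.

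I would wrap up by justifying the passage to the limit in the period functional: the $\Theta_2$-value of a homoclinic geodesic with $G(x(t)) \to 1$ equals $\lim_{t\to\infty} Cost_y(c,[-t,t])$, and because the travel intervals $\mathcal{I}_{n_j}$ stay inside a fixed compact $K_x$ (Lemma \ref{lem:d-t-bounded-trav-interval}, applicable since $Cost(c_{n_j},\cdot)$ is uniformly bounded by $\Theta_h$ via Corollary \ref{cor:h-The}) and $1 - G^2(x)$ stays bounded below away from the endpoints, the integrals defining $Cost_y$ converge; combined with $Cost_y(c_{n_j},[0,T_{n_j}]) \to \Theta_2(F_h,[0,1])$ from \eqref{eq:h-period-asy-cond}, this transfers the limiting value to $c_\infty$. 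The main obstacle I anticipate is precisely this analytic continuity step — ruling out loss of mass in the improper integral as $t^*_n$ drifts and the hill interval of $G_n$ possibly degenerates — but the uniform cost bound from Corollary \ref{cor:h-The} and the compactness of the travel intervals are designed to close exactly that gap, so the argument should go through as in the direct-type case.
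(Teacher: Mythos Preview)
Your proposal is correct and follows essentially the same route as the paper: use the asymptotic condition \eqref{eq:h-asy-cond} to force $G(0)=a+b=1$, hence $(a,b)\in Pen_h^+$, and then invoke the injectivity of the period map on $Pen_h^+$ from Corollary \ref{cor:h-The} together with \eqref{eq:h-period-asy-cond} to conclude $G=F_h$. Your write-up is more careful than the paper's terse paragraph (you explicitly rule out $Pen_h^-$, justify the homoclinic type of the limit, and discuss the passage to the limit in $Cost_y$), and you correctly invoke $\Theta_2$ where the paper writes $\Theta_1$---the latter appears to be a typo, since Corollary \ref{cor:h-The} proves injectivity of $\Theta_2$, not $\Theta_1$.
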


\begin{proof}
By Proposition \ref{prp:seque-comp} $\tilde{c}_{n}(t)$ has a convergent subsequence $\tilde{c}_{n_s}(t)$ converging to a minimizing geodesic $\tilde{c}(t)$ on the interval $\mathcal{T}_N$. Being a geodesic in $\R^3_{F_{h}}$, $c(t)$ is associated to a polynomial $G(x) = a +bF_{h}(x)$. $G(0)= a+b$ must be equal $1$, to satisfy the asymptotic conditions given by \eqref{eq:h-asy-cond}. Then $(a,b)$ is in $Pen_{h}^*$, the set defined in Corollary \ref{cor:h-The}. Since the map $\Theta_1(G,I):Pen_{h}^+ \to \R$ is one to one, the unique polynomial in $Pen_{h}^*$ satisfying the condition  \eqref{eq:h-asy-cond} is $G(x) = F_h(x)$. 
\end{proof}

The proof of Theorems \ref{th1:h-me-lin} and \ref{th:main-2} are the same as the proof of Theorems \ref{th1:d-t-me-lin} and \ref{th:main-1}, respectively.

\section{Conclusion}

We formalized the method used in \cite{RM-ABD} to prove that a particular geodesic is a metric line. Theorem \ref{th:main-1} proves the Conjecture \ref{con:1} for the heteroclinic of the direct-type case, and the problem remains open for the homoclinic case. Theorem \ref{th1:h-no-me-lin} says we cannot use the space $R^3_F$ to prove the Conjecture for the homoclinic case. However, Theorem \ref{th1:h-no-me-lin} does not imply that the Conjecture is false. The homoclinic case can be solved by showing the corresponding period map in $\Je$ restricted to the homoclinic geodesics is one-to-one.

\appendix

 \section{Proof Of Lemma \ref{lem:d-t-bounded-trav-interval}}\label{sub-sec:single-var}

\begin{definition}
Let $\mathcal{P}(k)$ be the vector space of polynomial on $\R$  of degree bounded by $k$, and let  $||F||_{\infty} :=  \sup_{x \in [0,1]} |F(x)|$ be the uniform norm. We denote by $B(k)$ the closed ball of radius 1.
\end{definition}

\begin{proposition}\label{prp:com-pol}
 $B(k)$ is a compact set. 
\end{proposition}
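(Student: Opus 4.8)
The plan is to prove that the closed unit ball $B(k)$ in the finite-dimensional space $\mathcal{P}(k)$, equipped with the uniform norm $\|\cdot\|_\infty$ on $[0,1]$, is compact by invoking the fact that all norms on a finite-dimensional vector space are equivalent, and that closed bounded sets in $\R^{k+1}$ are compact (Heine--Borel). First I would fix the coordinate isomorphism $\Phi:\R^{k+1}\to\mathcal{P}(k)$ sending $(a_0,\dots,a_k)$ to the polynomial $\sum_{i=0}^k a_i x^i$. This is a linear bijection between finite-dimensional normed spaces, hence a homeomorphism once we know $\|\cdot\|_\infty$ (pulled back via $\Phi$) is equivalent to, say, the Euclidean norm on $\R^{k+1}$.

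The key steps, in order, are: (1) observe $\mathcal{P}(k)$ has dimension $k+1$, so it is linearly isomorphic to $\R^{k+1}$; (2) recall or quote the standard theorem that any two norms on a finite-dimensional real vector space are equivalent, so in particular the map $\Phi$ and its inverse are continuous (Lipschitz) with respect to $\|\cdot\|_\infty$ on the target and the Euclidean norm on $\R^{k+1}$; (3) note that $B(k)=\Phi\bigl(\Phi^{-1}(B(k))\bigr)$ and that $\Phi^{-1}(B(k))=\{v\in\R^{k+1}: \|\Phi(v)\|_\infty\le 1\}$ is closed (preimage of a closed set under the continuous map $v\mapsto\|\Phi(v)\|_\infty$) and bounded (by norm equivalence, $\|\Phi(v)\|_\infty\le 1$ forces $|v|\le C$ for a constant $C$ depending only on $k$); (4) conclude by Heine--Borel that $\Phi^{-1}(B(k))$ is compact in $\R^{k+1}$, and hence its continuous image $B(k)$ is compact in $(\mathcal{P}(k),\|\cdot\|_\infty)$.

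The only point requiring a little care is step (3), the boundedness of $\Phi^{-1}(B(k))$: one must check that $\|\cdot\|_\infty$ restricted to $[0,1]$ is genuinely a norm on $\mathcal{P}(k)$ — nondegeneracy uses that a polynomial of degree $\le k$ vanishing on all of $[0,1]$ (indeed on infinitely many points) is the zero polynomial — and then invoke norm equivalence to transfer boundedness in $\|\cdot\|_\infty$ to boundedness in the Euclidean norm. I do not expect any genuine obstacle here; the statement is essentially a packaging of Heine--Borel together with equivalence of norms in finite dimensions, and the proof is short. If one prefers to avoid quoting norm equivalence, an alternative is a direct diagonal/Bolzano--Weierstrass argument: given a sequence $F_n\in B(k)$ with coefficient vectors $(a_0^{(n)},\dots,a_k^{(n)})$, one shows the coefficient vectors are bounded (again using that the uniform norm controls the coefficients via, e.g., evaluation at $k+1$ distinct points in $[0,1]$ and inverting a Vandermonde matrix), extracts a convergent subsequence of coefficient vectors, and observes the corresponding polynomials converge uniformly on $[0,1]$ to a limit still in $B(k)$.
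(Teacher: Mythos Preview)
Your argument is correct and is the standard, cleanest route: closed balls in finite-dimensional normed spaces are compact, and the only subtlety---that $\|\cdot\|_\infty$ on $[0,1]$ is genuinely a norm on $\mathcal{P}(k)$---you address explicitly. The paper takes a different tack: it invokes the Arzel\`a--Ascoli theorem, obtaining equicontinuity from the Markov brothers' inequality $|F'(x)|\le k^2\|F\|_\infty$, which gives a uniform Lipschitz bound on $B(k)$. Your approach is more elementary and self-contained (no need for Markov's inequality or Arzel\`a--Ascoli), while the paper's route yields as a byproduct an explicit Lipschitz constant for elements of $B(k)$, though that constant is not used elsewhere in the paper. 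Either argument settles the proposition; yours is the shorter one.
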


\begin{proof}
Since $B(k)$ is a bounded subset of the finite-dimensional space $\mathcal{P}(k)$, it is enough to prove that $B(k)$ is closed, indeed, by Arzela-Ascoli theorem we just need to prove that $B(k)$ is an equi-continuous set: let $F(x)$ be a polynomial in $C(k)$, then the Markov brothers' inequality implies $|F'(x)| \leq k^2$, so $|F(x_1)-F(x_2)| < k^2 |x_1-x_2|$.     
\end{proof}

\begin{definition}\label{def:normal-pol}
We say  a polynomial $F$ is unitary if $F$ has a hill interval $[0,1]$, and let $\mathcal{P}_N(k)$ be the set of unitary polynomials.    
\end{definition}

\begin{corollary}\label{cor:even-poly-F}
If $G_{n}(x)$ is a sequence of non-constant polynomials in $Pen_F$ with hill interval $I_n = [x_n,x'_n]$ such that $G_{n}(x_n) = G_{n}(x'_n) = 1$, $\lim_{n \to \infty} x_n = -\infty$ and $\lim_{n \to \infty} x'_n = \infty$, then $F(x)$ must be even degree.  
\end{corollary}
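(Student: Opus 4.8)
The plan is to use the defining relation of the pencil, $G_n = a_n + b_n F$, together with the prescribed boundary values $G_n(x_n) = G_n(x'_n) = 1$, to force $F$ to take equal values at two points escaping to $-\infty$ and to $+\infty$ respectively; since a polynomial of odd degree cannot do that, $\deg F$ must be even. Note that the hypothesis $|G_n| < 1$ on the interior of $I_n$ plays no role here — only the endpoint values are used.

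First I would observe that, since each $G_n$ is non-constant, in the expression $G_n(x) = a_n + b_n F(x)$ the coefficient $b_n$ is nonzero, and in particular $F$ itself is non-constant — if $F$ were constant there would be no non-constant polynomial in $Pen_F$ at all and the statement holds vacuously, $\deg F = 0$ being even. Subtracting the two endpoint equations $a_n + b_n F(x_n) = 1$ and $a_n + b_n F(x'_n) = 1$ and dividing by $b_n \neq 0$ yields $F(x_n) = F(x'_n)$ for every $n$.

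Now suppose, for contradiction, that $\deg F$ is odd. Then $\lim_{x\to -\infty} F(x)$ and $\lim_{x\to +\infty} F(x)$ are $+\infty$ and $-\infty$ in one order or the other, so there is an $M > 0$ such that $F$ has constant sign on $(-\infty, -M)$ and the opposite constant sign on $(M, \infty)$. Since $x_n \to -\infty$ and $x'_n \to +\infty$, for all sufficiently large $n$ we have $x_n < -M$ and $x'_n > M$, so $F(x_n)$ and $F(x'_n)$ have opposite nonzero signs, contradicting $F(x_n) = F(x'_n)$. Hence $\deg F$ is even. The argument is entirely elementary; the only point needing a word of justification is the passage from ``$G_n$ non-constant'' to ``$b_n \neq 0$'', which is immediate once one notes that $F$ is non-constant.
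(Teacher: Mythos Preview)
Your proof is correct and follows essentially the same route as the paper's: both argue that the endpoint conditions force $b_n(F(x'_n)-F(x_n))=0$, and then use that an odd-degree polynomial takes values of opposite sign far to the left and far to the right (the paper phrases this via a compact set $K_x$ containing all roots of $F$, your $M$ plays the same role). Your presentation is in fact slightly cleaner in first isolating $b_n\neq 0$ and hence $F(x_n)=F(x'_n)$ before deriving the contradiction.
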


\begin{proof}
Let $G_n(x)$ be equal to $a_n + b_nF(x)$. There exists $K_x$ a compact set containing all the roots of $F(x)$, and let $n$ be large enough that $K_x \subset I_n$. Let us assume $F(x)$ is an odd degree. Without loss of generality, let us assume $F(x'_n) > 0$ and $ F(x_n) < 0$, then $0 = G(x'_n) - G(x_n) = b_n(F(x'_n)-F(x_n))$ and $b_n = 0$ since $F(x'_n)-F(x_n) > 0$, which is a contradiction to the assumption that $G_{n}(x)$ is a sequence of non-constant polynomials.
\end{proof}

\subsection{Proof Of Lemma \ref{lem:d-t-bounded-trav-interval}}\label{proof:bounded-trav-interval}
\begin{proof}
Let $c_{n}(t) = (x_n(t),y_n(t),z_n(t))$ be a sequence of $\R^{3}_{F}$-geodesics traveling during a time interval $[(t_0)_n,(t_1)_n]$  and with travel interval $\mathcal{I}_n [(t_0)_n,(t_1)_n]$ such that $x_n ((t_0)_n)$ and $x_n ((t_1)_n)$ are in $[0,1]$ for all $n$. We will prove that if $\mathcal{I}_n$ is unbounded, then $\Theta(c,[t_0,t_1])$ is unbounded.

The sequence of $c_{n}(t)$ of $\R^{3}_{F}$-geodesics, induces a sequence of $G_n(x)$ polynomials. We use the sequence $G_n(x)$ to define a sequence of unitary polynomials $\hat{G}_n(\tilde{x}) := G_n(h_n(\tilde{x}))$ where $h_n(\tilde{x}) = (x_0)_n + u_n \tilde{x}$ with $u_n := (x_0)_n - (x_1)_n$. Since $\hat{G}_n(\tilde{x}) $ is in $C(k)$. There exists a subsequence $\hat{G}_{n_j}(\tilde{x})$ converging to $\hat{G}(\tilde{x})$. Let us proceed by the following  cases: case  $\hat{G}(\tilde{x}) \neq 1$ or case $\hat{G}(\tilde{x}) = 1$.

Case  $\hat{G}(\tilde{x}) \neq 1$:  by Fatou's lemma $ 0 < Cost(\hat{G}) \leq \liminf_{n_j\to \infty} Cost(\hat{G}_n)$. Then $u_{n_j} \to \infty$ implies $Cost(c,\mathcal{I}_{n_j})$ is unbounded.

Case $\hat{G}(\tilde{x}) = 1$:  let $K_{x}'$ be a compact set such that all the roots of $1-F(x)$ are in $K_{x}'$. There exists $n^* >0$ such that $\hat{G}(\tilde{x}) > \frac{1}{2}$ for all $\tilde{x}$ in $[0,1]$ if $n_s > n^*$.  We split the integral for $\Delta z(c,\mathcal{I}_n)$ given by Corollary \ref{prop:mag-geo-Delta} in the following way  
\begin{equation}
\begin{split}
\int_{\mathcal{I}_{n_j}} \frac{ (1-F(x))G_{n_j}(x)}{\sqrt{1-G^2_{n_j}(x)}} dx  = & \int_{K'_{x}\cap \mathcal{I} } \frac{(1- F(x)) G_{n_j}(x)}{\sqrt{1-G^2_{n_j}(x)}} dx  \\
&  + \int_{(K_{x}')^c\cap \mathcal{I}} \frac{(1- F(x)) G_{n_j}(x)}{\sqrt{1-G^2_{n_j}(x)}} dx.
\end{split}
\end{equation}
Since the first integral of the right side is finite,  it is enough to focus on the second integral. 

We proceed by cases:  Case 1, $(x_0)_{n_j}$ and $(x_1)_{n_j}$ are both unbounded. Case 2,  $(x_0)_{n_j}$ is bounded and $(x_1)$ is unbounded. Case 3, $(x_0)_{n_j}$ is unbounded and $(x_1)_{n_j}$ is bounded.

Case 1: by Corollary \ref{cor:even-poly-F} implies $F(x)$ is even, then the condition $\hat{G}(\tilde{x}) > \frac{1}{2}$ implies $|G_{n_j}(x)| > \frac{1}{2}$ in the travel interval $\mathcal{I}_{n_j}$ and $(1-F(x))G_{n_j}(x)$ does not change sign in the set $\mathcal{I}_{n_j} \backslash K'_{x}$, therefore
$$ |\int_{\mathcal{I}_{n_j} \backslash K'_{x}} \frac{(1-F(x)) G_{n_j}(x) }{\sqrt{1-G^2_{n_j}(x)}} dx| >  \frac{1}{2 }\int_{\mathcal{I}_{n_j} \backslash K'_{x}} |F(x)| dx \to \infty \;\; \text{when}\;\; n_j \to \infty. $$
A similar proof follows for Cases 2 and 3.   
\end{proof}

\section{Proof Of Theorem \ref{th1:h-no-me-lin}}\label{sec:th1_:h-non-me-lin}

For simplicity, we will prove Theorem \ref{th1:h-no-me-lin} for the case $F(x) = 1-2x^3$. Let $c(t)$ be a $\R^{3}_{F}$-geodesic for $F(x) = 1-2x^3$  with initial point $c(0) = (1,0,0)$ and hill interval $[0,1]$. Let us consider the time interval $[-n,n]$, since the reduced system is  reservable it follows $x_n := x(n) = x(-n)$ and the travel interval $\mathcal{I}_n = 0[x_n,1]$.  By Corollary \ref{prop:mag-geo-Delta}, the relation between the travel interval and $n$ is given by
$$ n = \int_{[x_n,1]} \frac{dx}{\sqrt{1-F^2(x)}}. $$
In addition, the change in $\Delta y(c,n)$ and $\Delta z(c,n)$ are given by
$$ \Delta y(c,n) := 2\int_{[x_n,1]} \frac{F(x) dx}{\sqrt{1-F^2(x)}} \;\; \text{and} \;\; \Delta z(c,n) = 2\int_{[x_n,1]} \frac{F^2(x) dx}{\sqrt{1-F^2(x)}}.  $$
Therefore
$$ c(-n) = (x(-n),-\frac{\Delta y(F,n)}{2}, -\frac{\Delta z(F,n)}{2})$$
and
$$c(n) = (x(n),\frac{\Delta y(F,n)}{2}, \frac{\Delta z(F,n)}{2}). $$

\begin{corollary}
If $F(x) = 1-2x^3$ and $n$ is large enough, then 
$$\Delta y(F,n) < \Delta z(F,n) \;\;\text{and}\;\;  \lim_{n \to \infty} \frac{\Delta z(F,n)}{\Delta y(F,n)} = 1.$$
\end{corollary}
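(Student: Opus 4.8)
The plan is to analyze the two integrals $\Delta y(F,n)$ and $\Delta z(F,n)$ as $n\to\infty$, i.e.\ as $x_n\to 0^+$. Write $\Delta z(F,n)-\Delta y(F,n) = 2\int_{[x_n,1]} \frac{F(x)(F(x)-1)}{\sqrt{1-F^2(x)}}\,dx = -2\int_{[x_n,1]} F(x)\sqrt{\frac{1-F(x)}{1+F(x)}}\,dx$, using $F^2-1=(F-1)(F+1)$. Since $F(x)=1-2x^3 > 0$ on $(0,\tfrac{1}{\sqrt[3]{2}})$ but changes sign as $x$ ranges over $[0,1]$, this difference is not of one sign pointwise; however, both $\Delta y$ and $\Delta z$ diverge as $n\to\infty$ (the integrand of $\Delta z$, which controls the slower-growing one, behaves like $\frac{1}{\sqrt{1-F^2(x)}}\sim \frac{1}{\sqrt{2\cdot 3}\, \sqrt{x}}$ near $x=0$ since $1-F(x)=2x^3$ and $1+F(x)\to 2$, so $1-F^2(x)\sim 4x^3$, giving integrand $\sim \tfrac12 x^{-3/2}$ — which diverges). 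So both $\Delta y(F,n)$ and $\Delta z(F,n)$ tend to $+\infty$.

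First I would establish the ratio limit. Since $\Delta z(F,n)-\Delta y(F,n) = 2\int_{[x_n,1]}\frac{F^2(x)-F(x)}{\sqrt{1-F^2(x)}}\,dx$ and the integrand equals $\frac{-F(x)(1-F(x))}{\sqrt{1-F^2(x)}} = -F(x)\sqrt{\tfrac{1-F(x)}{1+F(x)}}$, which is bounded on $[0,1]$ (the factor $\sqrt{1-F(x)}$ kills the singularity near $x$ where $F=1$, and near $x=1$ where $F=-1$ the numerator $1-F(x)\to 2$ stays bounded while... actually near $x=1$, $1+F(x)=2(1-x^3)\to 0$, so $\sqrt{\tfrac{1-F}{1+F}}$ blows up; but $\tfrac{1-F(x)}{1+F(x)} = \tfrac{2x^3}{2(1-x^3)} = \tfrac{x^3}{1-x^3}$, integrable since the singularity is like $(1-x)^{-1/2}$). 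Hence $\Delta z(F,n)-\Delta y(F,n)$ converges to a finite constant $C := -2\int_{[0,1]} F(x)\sqrt{\tfrac{1-F(x)}{1+F(x)}}\,dx$ as $n\to\infty$, while $\Delta y(F,n)\to\infty$. Therefore $\frac{\Delta z(F,n)}{\Delta y(F,n)} = 1 + \frac{C + o(1)}{\Delta y(F,n)} \to 1$, proving the second claim.

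Next, for the inequality $\Delta y(F,n) < \Delta z(F,n)$ for large $n$, it suffices to show the finite limiting constant $C$ is strictly positive. Split the integral $\int_0^1 F(x)\sqrt{\tfrac{1-F(x)}{1+F(x)}}\,dx$ at the root $x_* = 2^{-1/3}$ of $F$: on $(0,x_*)$ we have $F>0$ and on $(x_*,1)$ we have $F<0$, so $C = 2\left(\int_{x_*}^1 |F(x)|\sqrt{\tfrac{1-F(x)}{1+F(x)}}\,dx - \int_0^{x_*} F(x)\sqrt{\tfrac{1-F(x)}{1+F(x)}}\,dx\right)$. I would argue $C>0$ by a substitution or direct estimate: on $(x_*,1)$ the weight $\sqrt{\tfrac{1-F}{1+F}}$ is large (it diverges at $x=1$) while on $(0,x_*)$ it is comparatively small (it vanishes like $x^{3/2}$ at $0$ since $1-F=2x^3$), so the negative-$F$ contribution dominates and $C>0$. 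Making this rigorous is the main obstacle — one needs a clean comparison of the two pieces, e.g.\ bounding $\int_0^{x_*}$ above by an explicit elementary integral and $\int_{x_*}^1$ below by another, or a change of variables $u = F(x)$ turning both into integrals over $u\in[-1,1]$ of $\frac{u}{\sqrt{1-u^2}}\cdot\frac{dx}{du}$ with $\frac{dx}{du} = -\frac{1}{6x^2}$ and $x = \left(\tfrac{1-u}{2}\right)^{1/3}$, after which the sign of $C$ reduces to a one-variable monotonicity/convexity comparison. I expect the asymptotic/divergence bookkeeping to be routine; the genuine work is the positivity of $C$.
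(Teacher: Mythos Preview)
Your argument for the limit is correct and essentially equivalent to the paper's: you show that $\Delta z(F,n)-\Delta y(F,n)$ converges to a finite constant $C$ while both terms diverge, hence the ratio tends to $1$. The paper phrases this as L'H\^opital's rule (differentiating both quantities with respect to $x_n$ and observing that the ratio of derivatives is $F(x_n)\to 1$), which is the same content.

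Where you fall short is the positivity of $C$, which you correctly flag as the crux but leave unproved. Your proposed splitting at $x_*=2^{-1/3}$ or the substitution $u=F(x)$ can be pushed through, but only after an additional symmetry comparison. The paper dispatches it in one line by reusing the integration-by-parts trick of Lemma~\ref{lem:h-The}. Since $1-F(x)=2x^3$ one has $-xF'(x)=3\bigl(1-F(x)\bigr)$, so
\[
C=-2\int_0^1\frac{F(x)\bigl(1-F(x)\bigr)}{\sqrt{1-F^2(x)}}\,dx
=\frac{2}{3}\int_0^1\frac{xF(x)F'(x)}{\sqrt{1-F^2(x)}}\,dx
=-\frac{2}{3}\int_0^1 x\,\frac{d}{dx}\sqrt{1-F^2(x)}\,dx.
\]
Integrating by parts, the boundary term $x\sqrt{1-F^2(x)}\big|_0^1$ vanishes (because $F(0)=1$ and $F(1)=-1$), leaving $C=\frac{2}{3}\int_0^1\sqrt{1-F^2(x)}\,dx>0$. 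This is the ``missing idea'': the identity $1-F(x)\propto xF'(x)$, specific to the monomial form $F(x)=1-2x^m$, converts the sign question into a positivity that is manifest.
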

\begin{proof}
If $F(x) = 1-2x^3$, then the same integration by parts, used to prove Corollary \ref{lem:h-The}, implies the inequality $\Delta y(F,n) - \Delta z(F,n) > 0$. L'Hopital rules shows $\lim_{n\to \infty} \frac{\Delta z(F,n)}{\Delta y(F,n)} = 1$. 
\end{proof}

\begin{figure}%
    \centering
    {{\includegraphics[width=4cm]{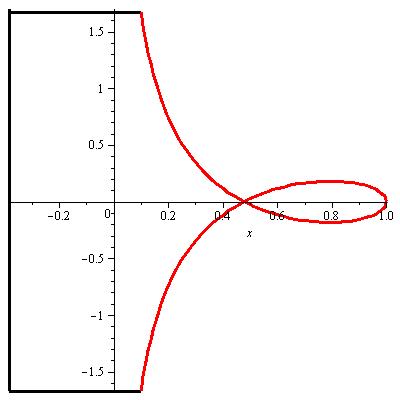} }}
    \qquad
    {{\includegraphics[width=4cm]{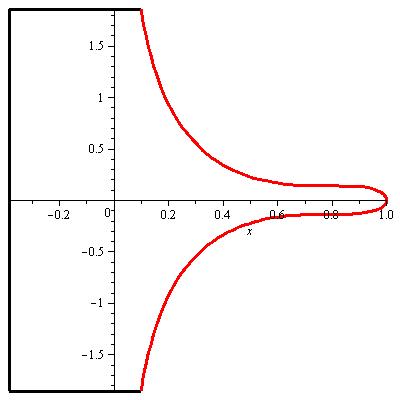} }}%
    \caption{Both images show the projection of the geodesic $c(t)$ for $F(x) = 1-2x^3$ and the curve $\tilde{c}(t)$ to the $(x,y)$ and $(x,z)$ planes, respectively.}    
    \label{fig:counter}
\end{figure}

\subsection{Proof Of Theorem \ref{th1:h-no-me-lin}}
\begin{proof}
For every large enough $n$ we can find $0 < \epsilon_n $ and $0 < \delta_n $ such that 
$$\Delta z(F,n) = (1+\epsilon_n)\Delta y(F,n)\;\; \text{and} \;\; F(-\delta_n) = 1+\epsilon_n.$$
If $T_1 = x_n + \delta_n $, $T_2 = T_1 + \Delta y(F,n)$ and $T_3 = T_1 + T_2$, then for every $n$ we define  the following curve $\tilde{c}_n(t)$ in $\R^{3}_{F}$ in the interval $[0,T_3]$ as follows
\begin{equation*}
\tilde{c}_n(t) = \begin{cases}
                c(-n) + (-t,0,0) \;\; \text{where}\;\; t \in [0,T_1] \\
                c(-n) + (-T_1,t-T_1,0) \;\; \text{where}\;\; t \in [T_1,T_2] \\
                c(-n) + (-T_1+t-T_2,\Delta y(F,n),\Delta z(F,n)) \; \text{where}\; t \in [T_2,T_3]. \\
                \end{cases}
\end{equation*}
See figure \ref{fig:counter}. By construction, $c(-n) = \tilde{c}_n(0)$ and $c(n) = \tilde{c}_n(T_3)$, the relation between the $n$ and $\Delta y(F,n)$ is given by 
$$2n = \Delta y(F,n) + Cost_t(F,[-n,n]),$$
while the relation between $T_3$ and $\Delta y(F,n)$ is given by 
$$T_3  =   \Delta y(F,n) + 2(\delta_n + x_n).$$
If $n \to \infty$, then $Cost_t(F,[-n,n]) \to \Theta_1(F,[0,1]) > 0$ and $2(\delta_n + x_n)\to 0$. Thus there exists an $n$ such that $Cost_t(F,[-n,n]) > 2(\delta_n + x_n)$, in other words $T_3 < 2n$ and we conclude that $\tilde{c}_n(t)$ is shorter that $c(t)$.

\end{proof}

\nocite{*} % to test all bib entrys
\bibliographystyle{plain}
\bibliography{bibli}

\end{document}